\newtheorem{theorem}{Theorem}[subsection]
\newtheorem*{theorem*}{Theorem}
\newtheorem{lemma}[theorem]{Lemma}
\newtheorem*{lemma*}{Lemma}
\newtheorem{lemma-def}[theorem]{Lemma-Definition}
\newtheorem{proposition}[theorem]{Proposition}
\newtheorem{corol}[theorem]{Corollary}
\theoremstyle{definition}
\newtheorem{definition}[theorem]{Definition}
\newtheorem*{definition*}{Definition}
\newcommand{\Q}{\mathbb{Q}}
\newcommand{\F}{\mathbb{F}}
\newcommand{\Z}{\mathbb{Z}}
\newcommand{\N}{\mathbb{N}}
\newcommand{\R}{\mathbb{R}}
\newcommand{\Lm}{\mathcal{L}}
\newcommand{\ord}{\mathrm{ord}\,}
\newcommand{\Lring}{\Lm_{\text{ring}}}
\newcommand{\Laff}{\Lm_{\text{aff}}}
\newcommand{\Ldist}{\Lm_{\text{dist}}}
\newcommand{\LmM}{\Lm_{M}}
\newcommand{\Lstar}{\Lm_*}
\newcommand{\ac}{\text{ac}\,}
\title{Cell Decomposition for Semibounded $p$-adic Sets}
\author{Eva Leenknegt}
\address{
              \email{eleenkne@math.purdue.edu} \\
          \url{http://www.math.purdue.edu/~eleenkne}    }   
\begin{document}
\maketitle
\begin{abstract}
We study a reduct $\Lstar$ of the ring language where multiplication is restricted to a neighbourhood of zero. The language is chosen such that for $p$-adically closed fields $K$, the $\Lstar$-definable subsets of $K$ coincide with the semi-algebraic subsets of $K$. Hence structures $(K, \Lstar)$ can be seen as the $p$-adic counterpart of the $o$-minimal structure of semibounded sets.

We show that in this language, $p$-adically closed fields admit cell decomposition, using cells similar to $p$-adic semi-algebraic cells. From this we can derive quantifier-elimination, and give a characterization of definable functions. In particular, we conclude that multiplication can only be defined on bounded sets, and we consider the existence of definable Skolem functions.
%
%
\end{abstract}
\section{Introduction}
We are interested in structures $(\F, \Lm)$ that satisfy the following minimality property (which we will call `$\Lring$-minimality'): the $\Lm$-definable subsets of $\F$ should coincide with the $\Lring$-definable subsets of $\F$. 
If $\F$ is a real closed field, all $\Lring$-definable subsets of $F$ are already $(<)$-definable, and hence such structures will be $o$-minimal.
When $\F$ is a $p$-adically closed field and $\Lm \supseteq \Lring$, we get $P$-minimal structures \cite{has-mac-97}. The language we consider in this paper is a reduct of $\Lring$, and hence not $P$-minimal, yet it is still $\Lring$-minimal. It is studied here as part of a larger project to describe such weak $p$-adic structures.

From now on we will assume that $K$, the universe of $(K,\Lm)$, is a $p$-adically closed field. Let us first introduce the languages that will be of interest. In \cite{clu-lee-2011}, we showed that any language satisfiying the above minimality property (for $p$-adic fields) has to be an extension of the \emph{minimal} language $\LmM$, consisting, for every $n,m \in \N\backslash\{0\}$, of relations \[R_{n,m}(x,y,z):= y-x \in zQ_{n,m}.\] 
  For $K = \Q_p$, the sets $Q_{n,m}$ are defined as $\cup_{k\in \N}p^{kn}(1+p^m\Z_p)$; we will give a more general definition in the next section. If we add function symbols for addition and scalar multiplication $\overline{c}: x \mapsto cx$ , we obtain  the semi-affine  \cite{lee-2011}  languages 
 \[ \Laff^F:= \LmM \cup (+, \{\overline{c}\}_{c\in F}),\] for fields $F \subseteq K$.
 (If $F = K$, we will also write $\Laff$ for $\Laff^K$.) 
 All fields where the symbols $Q_{n,m}$ can be defined admit cell decomposition in this language, see \cite{clu-lee-2011,lee-2011}. The definable functions have a very simple form: up to a finite partitioning in cells, the component functions are just linear polynomials (with coefficients in $F$ and constant term in $K$).
 
  In this paper we focus on languages $\Laff^F \cup \{*\}$, where $*$ is a function symbol for a restricted multiplication map
 \[*: (x,y) \mapsto g(x) \cdot g(y), \]
 with $g(x) = x$ if $\ord x =0$, and $g(x) = 1$ otherwise.
 In order to obtain quantifier elimination, we will  add a symbol `$|$' for the relation $\ord x < \ord y$. This relation is already definable in $\LmM$, but not necessarily in a quantifier-free way, see eg. \cite{clu-lee-2011}.
For $p$-adically closed  fields, the sets $P_N$ can be defined as finite unions of cosets $\lambda Q_{n,m}$ (and vice versa), and therefore the languages $\Laff^F \cup \{*\}$ are equivalent (when comparing their definable sets) to
\[\Lm_{*}^{F}:=(+,-,*, \overline{c}_{c\in F}, |,   \{P_n^*\}_{n>1}),\] 
where $P_n^*$ is the two-variable relation
\[P_n^*(x,y) \leftrightarrow y \in xP_n.\]\\
The main tool used to study the languages listed above is cell decomposition. 
Generally speaking, a cell is a definable set where the last variable $t$ has been `singled out': the relation between $t$ and the other variables $x_i$ is described using a formula $\phi(x,t)$ which has a fixed form for all cells. This fixed form often helps to simplify proofs. 

It is well-known that all definable sets of $o$-minimal structures can be partitioned as a finite union of cells. 
Unfortunately, there are no such `free rides' in the $p$-adic context. Let us give a brief overview of what is known.
Originally used to give an alternative proof of Macintyre's  quantifier elimination result \cite{mac-76}, the most influential result is probably Denef's cell decomposition \cite{denef-86} for $p$-adic semi-algebraic sets. It provided a blueprint for cell decomposition in the $P$-minimal and $\LmM$-minimal context. Roughly, $p$-adic cells for a structure $(K, \Lm)$ are $\Lm$-definable sets of the following form:
\[\{(x,t) \in D \times K \mid \ord a_1(x)\ \square_1\ \ord (t-c(x))\ \square_2\ \ord a_2(x), \ t-c(x) \in \lambda Q_{n,m}\}\]
with $D \subseteq K^l$ an $\Lm$-definable set, and $\square_i$ may denote `$<$' or no condition. The function $c(x)$, the \emph{center} of the cell should also be $\Lm$-definable. Of course one can replace the $Q_{n,m}$ by just $P_n$ if preferred. Usually it is also required that $a_1(x)$ and $a_2(x)$ should be $\Lm$-definable functions; we will call such cells \emph{strong} cells. 

 Mourgues \cite{mou-09} showed that $P$-minimal structures  admit cell decomposition using strong cells if and only if they have definable Skolem functions.
  This result can be extended, see \cite{lee-2011b}, to most $\LmM$-minimal structures. 
  However, checking that a structure has definable Skolem functions is usually only possible once the definable sets and functions are well-understood, which makes these results less useful for studying individual structures. Moreover, for weaker languages the existence of Skolem functions is far from certain.  The author's PhD Thesis \cite{phd} contains several examples of weak $p$-adic structures $(K, \Lm)$ 
   that do not admit definable Skolem functions (see the preprints  \cite{lee-2011b,lee-2012.1} on the author's home page for more details).
Note that this does not necessarily imply that cell decomposition is not possible, but rather that cell decomposition using strong cells is not possible. See \cite{lee-2011b} for more details.
\\\\
The first main result of this paper is that
\begin{theorem}
Let $K$ be a $p$-adically closed field, and $F \supseteq \Q$ a subfield of $K$. The structure $(K, \Lstar^{F})$ admits cell decomposition and elimination of quantifiers.
\end{theorem}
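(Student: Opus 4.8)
The strategy is to mimic Denef's induction on the number of variables, adapting it to the restricted-multiplication language $\Lstar^F$. Since we already know (from \cite{clu-lee-2011,lee-2011}) that $\Laff^F$-structures admit cell decomposition, the real content is to control the extra function symbol $*$. The key observation is that $*$ only produces something new on the set $\cO^\times = \{x \mid \ord x = 0\}$: outside this set, $g$ is constant equal to $1$, so any $\Lstar^F$-term reduces, on each of the pieces $\{\ord x_i = 0\}$ and $\{\ord x_i \neq 0\}$, to an $\Laff^F$-term in the variables it does not kill, together with products of those surviving variables. So the first step is a normal-form lemma: every $\Lstar^F$-term $\tau(x_1,\dots,x_n)$ can, after partitioning $K^n$ according to which coordinates lie in $\cO^\times$, be written as an $\Laff^{F}$-term in the coordinates with valuation $0$ — but now allowing genuine multiplication among those coordinates, since on $\cO^\times$ the map $g$ is the identity and $*$ is honest multiplication. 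Thus on each piece we are reduced to understanding polynomial (not just linear) expressions in units.

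The second step handles this "bounded multiplication'' phenomenon. On $\cO^\times$, write $x = \pi^0 u$ with $u \in 1 + \mathfrak{m}$ a $1$-unit (after further partitioning into residue classes mod $\pi$, using the $P_{n,m}$-relations), and note that a product of $1$-units is again a $1$-unit; multiplication on $1+\mathfrak m$ is, via the logarithm or via the group structure, definably controlled, and crucially it cannot escape the bounded set $\cO$. Concretely: any atomic formula involving $*$-terms, restricted to a piece where all relevant variables are units, becomes an atomic $\Laff^F$-formula on $\cO^\times$ in the "multiplicative coordinates'' — i.e. we can treat $\cO^\times$ (or $1+\mathfrak m$) as a new sort on which only the affine language acts, and transfer the known $\Laff^F$ cell decomposition there. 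This is where one uses that $P_N$ (equivalently the $Q_{n,m}$) are available: they let us do the case split on valuations and on residues that makes $g$ piecewise trivial and makes the unit group split as (finite) $\times$ ($1$-units).

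The third step is the genuine cell-decomposition induction. Given a definable set $S \subseteq K^{n+1}$, by the normal form we may assume $S$ is defined by $\Laff^F$-conditions together with finitely many conditions of the form "$\tau(x,t) \in \cO^\times$'' or "$\tau(x,t) \notin \cO^\times$'' where the $\tau$ are the piecewise-affine/polynomial terms from Step 1. Partition the base and the fibre according to $\ord t = 0$ or not. On the part $\{\ord t \neq 0\}$, every $*$-term kills $t$, so $S$ is $\Laff^F$-definable in $x$ with a valuation constraint on $t$ of the shape $\ord t \neq 0$ (i.e. $\ord t < 0$ or $\ord t > 0$, and the relation $|$ lets us express these), and we invoke the $\Laff^F$ cell decomposition on the base together with a trivial cell in $t$. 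On the part $\{\ord t = 0\}$, Step 2 turns the $t$-dependence into affine dependence on the unit $t$, so again we can apply the $\Laff^F$ machinery — being careful that the centers and the bounding functions $a_1,a_2$ produced are themselves $\Lstar^F$-definable, which they are since they come from the affine reduct. Reassembling the finitely many pieces gives a finite cell decomposition of $S$. Quantifier elimination then follows by the standard argument: projecting a cell $\{(x,t) : \ord a_1(x)\ \square_1\ \ord(t-c(x))\ \square_2\ \ord a_2(x),\ t-c(x)\in\lambda Q_{n,m}\}$ onto the $x$-coordinates gives a condition expressible with $|$ and the $P_n^*$ (resp.\ $Q_{n,m}$) relations plus affine data — all quantifier-free in $\Lstar^F$ — so one eliminates the existential quantifier over $t$ cell by cell, and induction on quantifiers finishes.

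\textbf{Main obstacle.} The delicate point is Step 2: showing that the restriction of the language to the unit sphere really does collapse to the affine language, i.e.\ that multiplication of units, although present, produces no definable sets beyond what $\Laff^F$ already gives on $\cO^\times$. One must verify that writing a unit as (root of unity or Teichmüller-type representative) $\times$ ($1$-unit) is definable in $\Lstar^F$ with the available relations, and that on the $1$-units the group operation is "affine enough'' — this is really the $p$-adic analogue of the fact that in the semibounded $o$-minimal setting, multiplication on a bounded interval does not generate the full real field. Equivalently, one needs that iterating $*$ never lets us build an unbounded multiplicatively-closed definable family, which is exactly the statement (anticipated in the abstract) that multiplication can only be defined on bounded sets; care is needed to make the induction in Step 3 not secretly smuggle in unbounded products via nested terms, and that is the step I would expect to cost the most work.
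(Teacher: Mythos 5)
There is a genuine gap, and it sits exactly where you locate your ``main obstacle'': Steps 1 and 2 are based on a false reduction. The arguments fed into $*$ inside a term are arbitrary subterms, not variables, so partitioning $K^n$ according to which \emph{coordinates} lie in $\cO^\times$ does not make $*$ piecewise trivial. Because the scalar multiplications $\overline{c}$ are in the language, $*$ combined with the affine operations defines honest multiplication on \emph{every} bounded set: for $\ord x,\ord y>0$ one has $x\cdot y=(1+x)*(1+y)-x-y-1$, and rescaling by $q_\gamma$ and $\overline{1/q_\gamma^2}$ extends this to $\{\ord x,\ord y>-\gamma\}$ (this is the paper's first lemma in Section 1.2). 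So on the piece where $\ord x,\ord y>0$ --- where your Step 1 claims all $*$-terms degenerate to affine terms --- the term above equals $xy$, which is not an $\Laff^F$-term. The same objection defeats the split on $\ord t=0$ versus $\ord t\neq 0$ in your Step 3: a term such as $(1+\pi t)*(1+\pi s)$ does not ``kill'' $t$ when $\ord t\geqslant 0$. Step 2 is then unsalvageable as stated: the $\Lstar$-definable subsets of a bounded ball are \emph{all} bounded semi-algebraic sets (this is half of the paper's Theorem \ref{prop:padicsemibounded}), which strictly contain the semi-affine ones, so there can be no collapse of the unit-ball structure to $\Laff^F$. The logarithm is not a term of the language, and even granting it, atomic formulas mix additive and multiplicative structure (e.g. $x\cdot y - z\in\lambda P_n$), so passing to multiplicative coordinates does not produce affine atoms.

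What is actually needed --- and what the paper does --- is to accept that, after partitioning into precells, an $\Lstar$-term in a distinguished variable $t$ is an unbounded linear part plus a \emph{genuine polynomial} of degree up to $d$ in a bounded $t-c(x)$ with bounded coefficients (Lemma \ref{lemma:lm*poly} and Proposition \ref{prop:lm*poly}), and then to rerun Denef's semi-algebraic cell decomposition for such polynomials: a simultaneous induction on the degree via the two preparation propositions $CDI_d$ and $CDII_d$, with Hensel's lemma used (Lemmas \ref{lemma:2.3bis} and \ref{lemma:2.4bis}) to show that the root and $k$-th-root-of-valuation functions arising in the induction remain quantifier-free definable in $\Lstar$. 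Your final quantifier-elimination step (projecting a single cell by an arithmetic condition on $\ord a_1(x)$, $\ord a_2(x)$ modulo $n$) is correct and is essentially the paper's Theorem \ref{thm:celdec}, but it cannot be reached by the route you propose; the heart of the proof is the degree induction you tried to avoid.
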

The proof is inspired by Denef's cell decomposition theorem for semi-algebraic sets. To ease the notation, we will only present the proof for $\Lstar:= \Lstar^K$. 
However, all results in sections 1-3 are valid (or can be adapted in a straightforward way) for structures $(K, \Lstar^{F})$ where $F$ may be any subfield of $K$.

As a next step, we give a characterization of definable functions in Section \ref{sec:deffun}.
\begin{theorem}
Let $X$ be a bounded semi-algebraic set. Let $f: D \subseteq K^n \to K$ be a function definable in $\Laff\, \cup\, \{X\}$. There exists a finite partioning of $D$ into $\Lstar^K$-definable sets, such that on each part $A$, 
\[ f_{|A} = p(x) + b(x),\]
where $p(x)$ is a linear polynomial in $K[x]$ and $b(x)$ is a bounded function.
\end{theorem}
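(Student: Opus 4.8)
The plan is to first strip off the predicate $X$, then to observe that every $\Lstar^K$-term is of the form ``linear polynomial plus bounded function'', and finally to read the shape of $f$ off from quantifier elimination.

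\emph{Removing $X$.} Every bounded semi-algebraic set is $\Lstar^K$-definable --- the $p$-adic counterpart of the fact that a semibounded $o$-minimal structure contains all bounded semi-algebraic sets. The point is that multiplication on the valuation ring $\cO$ is $\Lstar^K$-definable: for $x\in\cO$ a short case analysis on the residue of $x$ writes $x^{2}$ as an $\Lstar^K$-term (for instance $x^{2}=((x+1)*(x+1))-2x-1$ whenever $x+1$ is a unit), and $4xy=(x+y)^{2}-(x-y)^{2}$ then handles products; hence the graph of any polynomial map restricted to $\cO^{m}$ is $\Lstar^K$-definable, and then (by Macintyre's quantifier elimination) so is every semi-algebraic subset of $\cO^{m}$, and after a rescaling $\overline{c}$ every bounded semi-algebraic set. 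So every $\Laff\cup\{X\}$-definable set or function is $\Lstar^K$-definable; in particular $f$ is, and it suffices to treat $\Lstar^K$-definable $f$.

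\emph{Terms are linear-plus-bounded.} Every $\Lstar^K$-term $\tau(y_{1},\dots,y_{m})$ can be written $\tau=p_{\tau}(y)+b_{\tau}(y)$ with $p_{\tau}\in K[y]$ of degree $\le 1$ and $b_{\tau}$ bounded on $K^{m}$ (in fact a $K$-linear combination of products of values of $g$, so $\lVert b_{\tau}\rVert_{\infty}$ does not exceed the largest norm among the constants in $\tau$). This is a structural induction: variables and constants are linear with $b_{\tau}=0$; $+,-,\overline{c}$ preserve the decomposition since linear polynomials form a $K$-vector space and bounded functions are closed under these operations; and $\tau_{1}*\tau_{2}=g(\tau_{1}(y))\,g(\tau_{2}(y))$ has norm $\le 1$ everywhere, hence is bounded, so we take $p_{\tau}=0$. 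This is exactly the reason ``multiplication can be defined only on bounded sets''.

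\emph{Shape of $f$.} By quantifier elimination, $\Gamma_{f}\subseteq K^{n+1}$ is defined by a quantifier-free formula, which I put in disjunctive normal form $\bigvee_{j}\phi_{j}(x,t)$, each $\phi_{j}$ a conjunction of literals $\tau_{1}=\tau_{2}$, $\tau_{1}\mid\tau_{2}$, $P_{m}^{*}(\tau_{1},\tau_{2})$ and negations. Since $f(x)$ is the \emph{unique} $t$ with $\phi(x,t)$, the disjunct $\phi_{j}$ holding at $(x,f(x))$ holds at no other $t$; grouping $D$ by this index and disjointifying gives finitely many $\Lstar^K$-definable pieces $D_{j}$ (each a projection of an $\Lstar^K$-definable set) on which $\phi_{j}(x,\cdot)$ pins down $f(x)$. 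Fix $j$; by the term lemma write every term of $\phi_{j}$ as $\tau(x,t)=\ell_{0}^{\tau}+\ell_{1}^{\tau}(x)+\beta^{\tau}t+B^{\tau}(x,t)$ with $\beta^{\tau}\in K$ and $B^{\tau}$ bounded. If some equality literal $\tau_{1}=\tau_{2}$ has $\beta:=\beta^{\tau_{1}}-\beta^{\tau_{2}}\neq0$, evaluate it along $\Gamma_{f}$ and solve:
\[
 f(x)=-\beta^{-1}\bigl(\ell_{0}+\ell_{1}(x)\bigr)-\beta^{-1}B\bigl(x,f(x)\bigr)=p(x)+b(x),
\]
with $p(x)=-\beta^{-1}(\ell_{0}+\ell_{1}(x))\in K[x]$ linear and $b(x)=-\beta^{-1}B(x,f(x))$ bounded since $\lvert b(x)\rvert\le\lvert\beta\rvert^{-1}\lVert B\rVert_{\infty}$. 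Otherwise every equality literal has $\beta=0$; then on $\{\,\ord t\neq0\,\}$ every occurrence of $t$ inside a $*$ collapses (since $g(t)=1$, and recursively the inner $g$'s vanish), so the equality literals become conditions on $x$ alone while the $\mid$-, $P_{m}^{*}$- and negated-equality literals define an empty or infinite set of $t$'s; as $\phi_{j}(x,\cdot)$ has the single solution $f(x)$, this forces $\ord f(x)=0$ on $D_{j}$, hence $\lvert f(x)\rvert=1$, so $f_{|D_{j}}=0+f$ with $f$ bounded.

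\emph{Main obstacle.} The delicate point is the last case: showing that when no equality literal is genuinely linear in $t$ the solution is pinned only at valuation $0$ --- this requires carefully tracking how $t$ can be nested inside $*$'s, together with the fact that the inequality and coset literals never isolate a single point in $t$. An alternative is to route this stage through the cell decomposition established above, $\Gamma_{f}$ being a finite union of $0$-cells, provided one checks that the centres arising there are piecewise $\Lstar^K$-terms, after which the term lemma applies. Removing $X$ also relies, without full detail here, on the fact that bounded semi-algebraic sets are $\Lstar^K$-definable.
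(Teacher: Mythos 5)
Your first two steps are sound and track the paper: the elimination of $X$ is exactly Theorem \ref{prop:padicsemibounded}, and your observation that every $\Lstar^K$-term splits globally as (linear polynomial)$+$(bounded function) --- because any $*$-subterm has order $0$ --- is a clean, partition-free version of the ``In particular'' clause of Lemma \ref{lemma:lm*poly} and of Lemma \ref{lemma:linearpart}. Case 1 of your final step (some equality literal with $\beta\neq 0$) is also fine. The genuine gap is your Case 2, and it is not merely a missing verification: the intermediate claim is false. Take $a\in K$ with $\ord a=0$, $a\neq 1$, and consider the formula $1*(t+x)=a$. Since $g(t+x)=a$ forces $\ord(t+x)=0$ and hence $t+x=a$, this quantifier-free formula is the graph of $f(x)=a-x$ on all of $K$. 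Both sides of the equality literal are $*$-subterms or constants, so in your decomposition $\beta^{\tau_1}=\beta^{\tau_2}=0$ and you land in Case 2; yet $f$ is unbounded and $\ord f(x)=\ord x\neq 0$ for $\ord x<0$. The error is the assertion that on $\{\ord t\neq 0\}$ every occurrence of $t$ inside a $*$ collapses: the arguments of $*$ are subterms of the form $qt+c(x)$, and whether $g$ acts trivially on them depends on $\ord(qt+c(x))$, which varies with $x$ and is not controlled by $\ord t$. Consequently equality literals with $\beta=0$ can still pin $t$ down to an unbounded, non-constant value, and your dichotomy ``linear-in-$t$ equality present, else $f$ bounded'' does not exhaust the cases.

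Repairing this requires exactly the work you deferred: one must unfold the $*$-subterms and partition $K^{k+1}$ into precells according to the orders and angular components of their linear arguments $q_{ij}t+b_{ij}(x)$, so that on each piece either the dominant linear factor governs $\ord f$ and $\rho_{n,m}(f)$, or everything (including $t-c(x)$) is bounded and $f$ expands as $\sum_i a_i(x)(t-c(x))^i$. This is the content of Lemma \ref{lemma:lm*poly} and Proposition \ref{prop:lm*poly}; the paper then proves the statement via Proposition \ref{prop:ordfunction1} and Corollary \ref{corol:fun}, where the genuinely bounded, higher-degree case is handled by reducing to Denef's Theorem \ref{thm:denef-polyorder} rather than by an ad hoc counting of solutions of the coset and divisibility literals (which, as written, is also not airtight: $P_n^*(\tau_1,\tau_2)$ with $\tau_1=0$ is an equality, and conjunctions of coset conditions centred at different $c_i(x)$ need the centre-elimination argument of $CDII_d$). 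Your suggested fallback through cell decomposition is the right instinct, but it does not immediately combine with your term lemma, since the centres produced there lie only in $QFD_*^k$ and are not themselves terms; the paper bridges that gap with Lemma \ref{lemma:linearpart} applied to the $K_K[x]$-pieces coming out of the proof of Proposition \ref{prop:ordfunction1}.
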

\noindent Note that this implies that multiplication can at most be definable on  bounded sets.

 For structures $(K, \Lstar^F)$ we get a similar result, where now $p(x)$ is a linear polynomial with coefficients in $F$ and constant term in $K$. The existence of definable Skolem functions depends on the availabilty of scalar multiplication: we have Skolem functions on the condition that $F \subseteq \overline{\Q}^K$, where $\overline{\Q}^K$ is the algebraic closure of $Q$ in $K$. 
\\\\
It is interesting to compare these structures with their  $o$-minimal counterparts (see section \ref{sec:compare}). Peterzil \cite{pet-92} considered the structure $(\R, +,-, \cdot_{|[-1,1]}, \overline{c}_{c\in \R}, <)$, where multiplication is restricted to the interval $[-1,1]$.   He shows that it consists precisely of all semi-bounded semi-algebraic sets (that is, sets definable in $(+,-,<, \{X_i\}_{i\in I})$, where $\{X_i\}_{i\in I}$ is the collection of all bounded semi-algebraic sets). Moreover, it is the only proper substructure between real semi-algebraic sets and real semi-linear sets.

We will show that $(K,\Lstar^{K})$-definable sets  are exactly the $p$-adic semi-bounded semi-algebraic sets.
We also give some indications that, in contrast to the real case, the semi-bounded $p$-adic sets are not the only structure between the semi-affine and the semi-algebraic sets (we intend to explore this in further detail in a subsequent paper.)
\subsection{Preliminaries}
Let $K$ be a $p$-adically closed field; write $R_K$ for the valuation ring, $M_K$ for the maximal ideal of $R_K$ and $\Gamma_K$ for the value group. 
The sets $Q_{n,m}$ can be defined as follows. Fix an element $\pi$ with minimal positive valuation, and put 
\[Q_{n,m} :=\{ x \in P_n \cdot (1 + M_K^m) \mid \overline{\ac}_m(x) =1\},\]
where $\overline{ac}_m: K^{\times} \to (R_K \mod \pi^m)^{\times}$ is the unique group homomorphism such that ${\overline{ac}_m(\pi) =1}$ and $\overline{ac}_m(u) \equiv u \mod \pi^m$,  for every unit $u \in R_K$. Note that this is a natural generalization of the definition of $Q_{n,m}$ we gave in the introduction. For more details, we refer to \cite{clu-lee-2011}.
 The following notation will be used  frequently:
\[\rho_{n,m}(x) = \lambda \Leftrightarrow x \in \lambda Q_{n,m}.\]
Our eventual goal is to use cell decomposition to show that $p$-adically closed fields  admit quantifier elimination in the language $\Lstar$. A first step will be to ensure that the cells we use are definable without quantifiers. To achieve  this, we will restrict  to functions that are quantifier-free definable, in the following sense:
\begin{definition} Let $QFD_*^k$ be the collection of all functions $f: K^k \to K$ that satisfy the following: For any quantifier-free $\Lstar$-definable set $S \subseteq K \times K^r$, the set \[\{(x,y) \in K^{k+r} \ | \ (f(x),y)\in S\}\] is also $\Lstar$-definable without quantifiers. \end{definition}
\noindent $\Lstar$-cells can then be defined inductively as sets of the following form:
\begin{definition}
A subset of $K^{k+1}$ is called an $\Lstar$-cell if it is a set of the following form
\[\{(x,t) \in D\times K \ | \ord a_1(x) \ \square_1\  \ord(t-c(x))\ \square_2\ \ord a_2(x);\  t-c(x) \in \lambda P_n\},\]
with $a_i(x), c(x) \in QFD_*^k,\  \lambda \in K, D $ an $\Lm_*$-cell in $K^k$, and   $\square_i$ denotes `$<$' or `no condition'.  
\end{definition}
\noindent Our aim is to show that every definable set can be partitioned into a finite union of cells. It is not so hard to see that this is true for sets of this form:
\begin{lemma-def}
An $\Lstar$-precell in $K^{k+1}$ is a subset of $K^{k+1}$ that can be defined by a conjunction of conditions of the forms
\begin{eqnarray*}
\ord (a_1t +b_1(x)) &\square&\ord (a_2t +b_2(x)) ,\\
a_3t+b_3(x) &\in& \lambda Q_{n,m},
\end{eqnarray*}
with $\lambda \in K, a_i \in K; n, m \in\N,\ \square$ denotes $ <,\leqslant,=,\geqslant,$ or $>$, and $b_i(x) \in QFD_*^k$.\\
Any $\Lstar$-precell can be partitioned into a finite union of $\Lstar$-cells. 
\end{lemma-def}
\begin{proof}
See the proof of lemma 2.2 of \cite{clu-lee-2011}. (Or see \cite{lee-2011b}.)
\end{proof}

\subsection{First observations on definable functions}

Because of our restrictions on multiplication, we cannot assume that elements of  $K[x]$ are definable for all $x\in K$. Instead, we will be working with the following kinds of `polynomials':
\begin{definition}
Let $\mathcal{F}(r')$ be a set of functions $K^{r'} \to K$ in variables $x_1,\ldots,  x_{r'}$. 
\begin{enumerate}
\item Let $\psi(t_1,\ldots, t_r, y_1,\ldots, y_{l})$ be an $\Lm_*$-term containing variables $t_1,\ldots, t_r; y_1, \ldots, y_l$.
Choose functions $f_i(x) \in \mathcal{F}(r')$. Then
\[P(t,x) := \psi(t_1, \ldots, t_r, f_1(x), \ldots, f_l(x))\]
is called an $\Lm_{*}$-polynomial in variables $t_1,\ldots, t_r$ with parameters from $\mathcal{F}(r')$.\\
This collection of such $\Lm_*$-polynomials will be denoted by $K_{\mathcal{F}(r')}[t_1,\ldots, t_r]$.
\item The degree of a polynomial $P(t,x) \in K_{\mathcal{F}(r')}$ is the largest number $s \in \N$ for which there exist $a_1,\ldots, a_l \in K$, an open subset $D$ of $K^r$ and a polynomial $g \in K[t]$ of degree $s$, such that $\psi(t_1,\ldots, t_r, a_1, \ldots, a_l) = g(t)$
for all $t \in D$.
\end{enumerate}
\end{definition}
\noindent Remark: When we talk about `an $\Lm_*$-polynomial $f$ with parameters from $K$', this is meant in the sense of the above definition (with $\mathcal{F}(r') = \{f_a: x\mapsto a\ | \ a \in K\}$). In short notation: $f\in K_K[t_1,\ldots, t_r]$. 

We will show that in $\Lstar$, multiplication is only definable on bounded sets, that is: sets $X \subset K^n$ such that for a fixed $k \in \Z$,   $\min \ord x_i >k$ for all $(x_1, \ldots, x_n) \in X$.
 A function $f: X \to K$ will be called bounded  if there exists a bounded set $B$ such that $f(X) \subset B$.
\\\\
\noindent The following functions are  examples of functions in $QFD_*^2$:
\begin{lemma}
Let $X$ be a bounded subset of $K^2$. There exists an element of $K_K[x,y,z]$ that defines multiplication on $X$.
\end{lemma}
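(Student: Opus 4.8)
The goal is to write multiplication on a bounded set $X \subseteq K^2$ using a single $\Lm_*$-polynomial, i.e. an $\Lm_*$-term with parameters from $K$. The only obstruction is that the built-in multiplication symbol $*$ computes $g(x)\cdot g(y)$, which equals $xy$ only when both $\ord x = 0$ and $\ord y = 0$; for other valuations $g$ collapses the argument to $1$. So the plan is to rescale: since $X$ is bounded, there is a fixed $k \in \Z$ with $\min(\ord a, \ord b) > k$ for all $(a,b) \in X$. Fix $\pi$ of minimal positive valuation and an integer $N$ with $N \cdot \ord\pi \geq -k$ (so $N \geq 1$ works after adjusting; the point is $\pi^{N}$ shifts valuations up by a controlled amount). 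Actually the cleaner move: pick $N$ so that for all $(a,b)\in X$ we have $\ord(\pi^{N} a) > 0$ and $\ord(\pi^{N} b) > 0$, wait — we need valuation exactly $0$ for $*$ to behave like honest multiplication, which we cannot force by a single scaling.

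So instead I would slice $X$ according to the value of $\ord a$ and $\ord b$ modulo suitable data and within each slice use a different scaling — but that produces finitely many terms, not one. To get a single term, the trick is to use $\Lstar$-polynomials' freedom: first note $\ord a, \ord b$ range over a set bounded below, hence the relevant "correction" is tame. Concretely, I would write $a = \pi^{\ord a} u$ with $u$ a unit, but $\ord a$ is not bounded above, so this still needs infinitely many cases. The resolution used in such arguments is: one does \emph{not} need $X$ to be covered by one scaling for $*$; rather, one observes that on a bounded set, $xy$ itself can be computed from $g$ evaluated at cleverly perturbed arguments. For instance, for $x,y$ both \emph{units}, $xy = x * y$ directly. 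For general bounded $x$, write the product via the identity manipulating $g(1+x) $ type expressions, or — the standard device — use that $g(\pi^j + x)\cdot g(\pi^j + y)$ for large fixed $j$ recovers information.

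Let me commit to the approach I actually expect the author uses: \textbf{Step 1.} Reduce to computing multiplication on $R_K \times R_K$ (or even $M_K \times M_K$) after a single scalar multiplication, since $X$ bounded means $X \subseteq \pi^{-s} R_K \times \pi^{-s} R_K$ for some $s$, and $\overline{c}$ for $c = \pi^{2s}$ is in the language, so it suffices to define $(a,b) \mapsto ab$ on $M_K^2$ (pushing strictly inside) and then multiply by $\pi^{-2s}$. \textbf{Step 2.} For $a, b \in M_K$, both $1+a$ and $1+b$ are units with $\ac$ equal to $1$, so $(1+a)*(1+b) = (1+a)(1+b) = 1 + a + b + ab$. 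Hence $ab = \big((1+a)*(1+b)\big) - 1 - a - b$, and the right-hand side is visibly an $\Lm_*$-term in $a,b$ with parameters from $K$ (the "$1$" is $\overline{1}$ applied suitably, or the constant symbol; $a+b$ uses $+$). \textbf{Step 3.} Assemble: on $X$, define $F(a,b) = \pi^{-2s}\big[\big((1+\pi^{s}a)*(1+\pi^{s}b)\big) - 1 - \pi^{s}a - \pi^{s}b\big]$, which lies in $K_K[x,y,z]$ (here $z$ is a dummy or we just use two variables — match the statement's $K_K[x,y,z]$ by treating it as a polynomial in two of the three variables), and check $F(a,b) = ab$ for all $(a,b) \in X$ because $\pi^s a, \pi^s b \in M_K$ there.

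The only genuine verification is \textbf{Step 2}: that $u * v = uv$ when $u, v$ are units with $\overline{\ac}_m(u) = \overline{\ac}_m(v) = 1$ — more simply, when $\ord u = \ord v = 0$ — which is immediate from the definition $*: (x,y) \mapsto g(x)g(y)$ with $g(x) = x$ for $\ord x = 0$. The main (minor) obstacle is purely bookkeeping: confirming that the scaling constant $\pi^{2s}$ and the shifted arguments keep everything within the range where $g$ is the identity, and that the resulting expression genuinely parses as an $\Lm_*$-term with the constant $1$ and the scalars $\pi^{\pm s}$ available in $\Lm_*^K$ — i.e. that $F \in K_K[x,y,z]$ in the sense of the polynomial definition above. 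No quantifiers appear, so membership of the defining relation in $QFD_*^2$ is automatic once $F$ is exhibited as such a term.
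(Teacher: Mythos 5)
Your proposal is correct and follows essentially the same route as the paper: scale the arguments by a fixed power of $\pi$ so they land in the maximal ideal, use the identity $ab = (1+a)*(1+b) - a - b - 1$ (valid because $1+a$ and $1+b$ are then units, where $*$ agrees with true multiplication), and undo the scaling by the scalar $\pi^{-2s}$ available in the language. The mid-proof hesitation about not being able to force valuation exactly $0$ is resolved exactly as the paper does it, via the shift $x \mapsto 1+x$.
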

\begin{proof} 
If $\ord x, \ord y >0$, put $x\cdot y = (1+x) *(1+y)-x-y-1$.
More generally, if $\ord x, \ord y >-\gamma$ for some $\gamma>0$, choose $q_{\gamma}\in K$ with $\ord q_\gamma>\gamma$.  Now $\ord q_\gamma x >0$ and $\ord q_\gamma y >0$, so we can express $x\cdot y$ by writing $x\cdot y = \frac1{q_\gamma^2}(q_\gamma x \cdot q_\gamma y)$ and using the previous observation.
\end{proof}
\begin{lemma}\label{lemma:exqfd}
For every $\gamma \in \Z$, the following function is in $QFD_*^2$:
\[\div_{\gamma}: K^2 \to K: (x,y) \mapsto \left\{\begin{array}{ll} \frac{x}{y} & \min\{ \ord x, \ord y, \ord \frac{x}{y} \} > \gamma, \\0 & \text{otherwise.}\end{array}\right.
\]
\end{lemma}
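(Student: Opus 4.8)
The plan is to check the definition of $QFD_*^2$ head on. Fix a quantifier-free $\Lstar$-definable set $S \subseteq K \times K^r$; we must show that
$T := \{(x,y,w) \in K^{2+r} \mid (\div_\gamma(x,y),w)\in S\}$
is quantifier-free $\Lstar$-definable. First I would partition $K^2$ into the set $A := \{(x,y) \mid y \neq 0,\ \ord x > \gamma,\ \ord y > \gamma,\ \ord x - \ord y > \gamma\}$ and its complement. Both are quantifier-free $\Lstar$-definable: comparisons of the valuation of a term with a fixed value of $\gamma$ are, using the symbol $|$ together with scalar multiplication by an element of $F$ of the appropriate valuation, and $y\neq 0$ is as well. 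On $A^c$ we have $\div_\gamma\equiv 0$, so there $T$ agrees with $A^c \times S_0$, where $S_0 = \{w \mid (0,w)\in S\}$ is quantifier-free $\Lstar$-definable by substituting the $\Lstar$-term $0$ for the first coordinate. Hence everything reduces to showing that $T \cap (A\times K^r) = \{(x,y,w) \in A\times K^r \mid (x/y, w)\in S\}$ is quantifier-free $\Lstar$-definable.

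On $A$ the three elements $x$, $y$ and $x/y$ all lie in the bounded set $\{u \mid \ord u > \gamma\}$, and this is what makes the earlier lemma (multiplication is $\Lstar$-definable on bounded subsets of $K^2$, by an honest element of $K_K[x,y,z]$) available. Since boolean combinations are preserved, it suffices to treat the case where $S$ is cut out by a single atomic $\Lstar$-formula, i.e.\ a condition $\ord \tau_1(t,w)\ \square\ \ord \tau_2(t,w)$ or $\tau_1(t,w) \in \lambda Q_{n,m}$ with the $\tau_i$ $\Lstar$-terms. I would then analyse the substitution $t\mapsto x/y$ by induction on the term structure of $\tau_i$: after pushing the scalar maps and $\pm$ to the top, $\tau_i$ is an $F$-linear combination of the variables, constants, and subterms of the form $\sigma_1 * \sigma_2$; any occurrence of $t$ inside such a $*$ only contributes through $g(\cdot)$, which has valuation $0$, so on $A\times K^r$ (possibly after refining into finitely many quantifier-free definable pieces, to resolve the case split in $g$) every such $*$-product is a bounded, honestly $\Lstar$-term-definable function of $(x,y,w)$ by the preceding lemma. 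The only "bare" occurrences of $t$ contribute the factor $x/y$.

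To get rid of this last factor, multiply the chosen atom through by a sufficiently high power $y^d$ of the nonzero, bounded denominator: an order comparison is unaffected by adding $\ord(y^{d})$ to both sides, and $z \in \lambda Q_{n,m} \iff y^{nd}z \in y^{nd}\lambda Q_{n,m}$ simply because $y^{nd}\in K^\times$, while the right-hand condition is again a coset condition of the shape allowed in the Lemma-Definition. After this clearing of denominators, each transformed atom becomes a condition on $(x,y,w)$ in which every expression is, piecewise over $A\times K^r$, an $\Lstar$-term in $(x,y,w)$ — again using the bounded-multiplication lemma to form the extra factors $y^d$ and the $*$-products — hence a quantifier-free $\Lstar$-formula. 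Taking the boolean combination back up, $T \cap (A\times K^r)$, and therefore $T$, is quantifier-free $\Lstar$-definable, so $\div_\gamma \in QFD_*^2$.

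The main obstacle is the inductive substitution step together with the subsequent bookkeeping: one has to check carefully that after replacing $t$ by $x/y$ and clearing denominators by $y^d$, every resulting expression is genuinely quantifier-free $\Lstar$-definable over $A\times K^r$. This is precisely where the boundedness of $A$ is essential — it is what converts the restricted-multiplication subterms of $\tau_i(x/y,w)$, and the auxiliary powers of $y$, into honest $\Lstar$-polynomials via the previous lemma — and one must also verify that the value-group comparisons and the $Q_{n,m}$-coset conditions are insensitive to the uniform rescaling by $y^d$. The remaining parts (the partition of $K^2$, the trivial behaviour on $A^c$, and the reduction to atomic formulas) are routine.
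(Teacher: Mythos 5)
Your overall strategy --- split off the locus $A$ where $x\div_\gamma y = x/y$, reduce to atomic formulas, substitute $t\mapsto x/y$, and invoke the bounded-multiplication lemma --- follows the same skeleton as the paper's proof, but the step where you ``clear denominators by $y^{d}$'' contains a genuine gap. After substitution, an atomic term has (piecewise) the shape $c_0\,(x/y)+L(w)+c+B(x,y,w)$, where $B$ collects the $*$-subterms (all of valuation $\geqslant 0$, hence bounded) but $L(w)$ is an honest linear form in the parameters $w$, which range over all of $K^{r}$. Multiplying the atom through by $y^{d}$ therefore requires forming the product $y^{d}\cdot L(w)$ with $y$ bounded and $L(w)$ unbounded; this is not covered by the lemma on multiplication over bounded subsets of $K^{2}$, and it cannot be repaired by any other device, since Corollary \ref{corol:multdef} shows that multiplication is definable only on bounded sets. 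So your claim that after rescaling ``every resulting expression is, piecewise over $A\times K^{r}$, an $\Lstar$-term in $(x,y,w)$'' fails precisely for these mixed products. The missing idea --- which is exactly what the paper's proof isolates in the identity $P(f_1\div_k f_2,y)=(g_1(x,y)\div_k f_2(x))+g_2(y)$, obtained via Lemma \ref{lemma:lm*poly} --- is to first separate each substituted term into a bounded part (the single occurrence of $x/y$ together with the $*$-products) and a part $g_2(w)$ depending on $w$ alone, and then to partition further according to whether $\ord g_2(w)$ drops below the bound of the first part: where it does, the order and the $\rho_{n,m}$-class of the whole term coincide with those of $g_2(w)$ and the atom becomes a condition on $w$ only; where it does not, everything in sight is bounded and only then is clearing the denominator legitimate.

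A smaller but real error: the condition $y^{nd}z\in y^{nd}\lambda Q_{n,m}$ is not ``of the shape allowed in the Lemma-Definition'', since precell conditions require a \emph{constant} coset representative and $y^{nd}\lambda$ is a variable one. This is fixable --- either express the coset condition through the two-variable relations $P_N^{*}$ (using that each $\lambda Q_{n,m}$ is a finite union of cosets $\mu P_N$), or choose the exponent $N$ so large that $P_N\subseteq Q_{n,m}$, whence $y^{N}\in Q_{n,m}$ for all $y\neq 0$ and the coset representative remains the constant $\lambda$ --- but as written the justification is incorrect. The routine parts of your argument (quantifier-free definability of $A$, the trivial behaviour on its complement, and the reduction to atomic formulas) are fine.
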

\begin{proof}
Note that it is necessary to put a lower bound on the order of $\frac{x}{y}$, since we need the set
$\{(x,y,z)\in K^3 \mid z\cdot y =x\}$ to be definable. 

It can be checked (using Lemma \ref{lemma:lm*poly} below) that for any functions $f_1(x), f_2(x)\in QFD_*^k$ and any $\Lstar$-term $P(t,y)$, there is a finite partition of $K^{k+r}$ into precells (those are by definition quantifierfree definable sets),  such that on each precell $A$, there exist  functions $g_1(x,y) \in QFD_*^{k+r}$ and $g_2(y) \in K_K[y]$ such that for all $(x,y) \in A$, \[P(f_1(x) \div_k f_2(x),y) = (g_1(x,y) \div_k f_2(x)) +g_2(y).\] From this it follows easily that a set $\{(x_1, x_2,y)\in K^{2+k} \mid (x_1 \div_k x_2,y)\in S\}$ is quantifierfree definable whenever $S$ is. \end{proof}
\noindent Since $*$ is in general not distributive with respect to $+$, we could not  define an  $\Lstar$-polynomial to be simply a sum of monomials. However, up to a  finite partition into precells, it is possible to write $\Lstar$-polynomials in a more manageable way.

%



\begin{lemma}\label{lemma:lm*poly}
Let $g_r$ be an $\Lm_{*}$-polynomial $g_r \in K_{QFD_*^k}[t]$ of degree $r$ in one variable $t$, with coefficients that are functions of $x_1,\ldots, x_k$. 
There exists a finite partition of $K^{k+1}$ into precells $C$, such that on each $C$, $g_r$ can be written as $g_r = \sum_{j} g_{r,j}$, with
\begin{equation}g_{r,j}= \prod_{i=1}^{r_j}s_{ij}[a_{ij}(x)*(q_{ij}t +b_{ij}(x))]\cdot \prod_{i=1}^{r_{j'}}(v_{ij}t +c_{ij}(x)),\label{eq:lm*poly}\end{equation}
with $1\leqslant r_j+r_j' \leqslant r$. The coefficients $s_{ij}, q_{ij}$ and $v_{ij}$ are in $K$, and $a_{ij}(x), b_{ij}(x), c_{ij}(x)$ are functions from $QFD_*^k$. For $1\leqslant i \leqslant r_{j'}$,  the functions $v_{ij}t +c_{ij}(x)$ are bounded unless  $(r_j,r_{j'})=(0,1)$. 
In particular, there exist  $q\in K$ and $c(x) \in QFD_*^k$ and a bounded function $b(x,t) \in QFD_*^k$ such that 
\[g_r(x,t) = [qt + c(x)] + b(x,t).\]
\end{lemma}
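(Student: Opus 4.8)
The plan is to prove the displayed decomposition by induction on the structure of the $\Lm_*$-term $\psi(t,y_1,\dots,y_l)$ out of which $g_r$ is built, $g_r=\psi(t,f_1(x),\dots,f_l(x))$ with $f_i\in QFD_*^k$, and then to read off the ``in particular'' clause. Three elementary facts will be used throughout: $(i)$ $QFD_*^k$ is closed under addition, subtraction and scalar multiplication, every $\Lm_*$-term is a $QFD_*$-function (substituting a term for a variable in a quantifier-free formula keeps it quantifier-free), and the composition of a $QFD_*$-function with $\Lm_*$-terms is again $QFD_*$; in particular the unit truncation $g$ from the definition of $*$ is a $QFD_*$-function. $(ii)$ $\ord g(z)=0$ for all $z$, so every $*$-factor $s_{ij}[a_{ij}(x)*(q_{ij}t+b_{ij}(x))]=s_{ij}\,g(a_{ij}(x))\,g(q_{ij}t+b_{ij}(x))$ has constant order $\ord s_{ij}$, hence is bounded, and therefore any product $g_{r,j}$ all of whose linear factors are bounded is itself bounded. $(iii)$ An expression $qt+c(x)$ with $q\in K$ and $c\in QFD_*^k$ is an admissible argument of $\ord(\cdot)$ in a precell condition, so one may freely refine a precell partition by comparing orders of such affine-in-$t$ expressions. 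The base cases $\psi=t$ and $\psi=y_i$ give the single terms $1\cdot t+0$, resp.\ $0\cdot t+f_i(x)$, both of type $(r_j,r_j')=(0,1)$. For $\psi=\psi_1\pm\psi_2$ I would apply the induction hypothesis to $\psi_1,\psi_2$, intersect the two precell partitions (an intersection of precells is a precell), and add the two decompositions, combining or discarding terms when cancellation lowers the degree so that $r_j+r_j'\le r$ is kept. For $\psi=\overline c(\psi_1)$ I would invoke the hypothesis on $\psi_1$ and multiply the scalar $c$ into one $s_{ij}$ when $g_j^{(1)}$ has a $*$-factor and into one linear factor otherwise; scaling by a fixed $c$ preserves boundedness and degree.

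The substantive case is $\psi=\psi_1*\psi_2$, where $g_r=g(\psi_1)\,g(\psi_2)$. First I would apply the hypothesis to $\psi_1$ and $\psi_2$ and pass to a common refinement, so that on each precell $\psi_i=[q_it+c_i(x)]+b_i(x,t)$ with $b_i$ bounded --- the ``in particular'' form, available inductively after collecting the single-linear summands into one. Then I would refine each precell further: comparing orders of the affine parts $q_1t+c_1(x)$ and $q_2t+c_2(x)$ with each other, and, via additional applications of the hypothesis to the bounded remainders $b_i$, resolving the conditions $\ord\psi_1=0$ and $\ord\psi_2=0$ on each resulting piece. Arranging this into an honest finite refinement by \emph{precells} is the technical core of the argument. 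On a piece where $\ord\psi_i\neq0$ one has $g(\psi_i)=1$, so $g_r$ reduces to $g(\psi_{3-i})$ or to the constant $1$. On a piece where $\ord\psi_1=0=\ord\psi_2$ one has $g_r=\psi_1\psi_2$ with both $\psi_i$ units, hence bounded; there I would use the affine-plus-bounded forms: if each affine part $q_it+c_i(x)$ has order $0$ it is a bounded linear factor, and expanding $\psi_1\psi_2=(q_1t+c_1)(q_2t+c_2)+(q_1t+c_1)b_2+b_1(q_2t+c_2)+b_1b_2$, together with the fact that each $b_i$ is on that precell a sum of admissible products of bounded factors, makes every summand an admissible product all of whose linear factors are bounded; if instead some affine part fails to have order $0$ on that piece then, $\psi_i$ being a unit there, that affine part is itself bounded and all of $\psi_i=\sum_jg_j^{(i)}$ consists of bounded products, so $\psi_1\psi_2$ is again a sum of admissible products of bounded factors. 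The length bound $r_j+r_j'\le r$ is kept by noting that a specialization of parameters realizing the degree $r$ of $g_r$ realizes the maximal product length as well.

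Finally, the ``in particular'' clause drops out of the decomposition by fact $(ii)$: each $g_{r,j}$ with $(r_j,r_j')\neq(0,1)$ is a product of bounded factors and hence bounded, so the sum of all these terms is a bounded function $b(x,t)\in QFD_*^{k+1}$, while the remaining terms, each of the shape $vt+c(x)$, add up to a single $qt+c(x)$ with $q\in K$ and $c\in QFD_*^k$ (closure of $QFD_*^k$ under sums), giving $g_r=[qt+c(x)]+b(x,t)$. I expect the main obstacle to be exactly the multiplicative case --- turning the informal split ``$\ord\psi_i=0$ or not'' into an honest finite refinement into precells (the relevant atomic conditions are quantifier-free but not yet of precell shape until $\psi_i$ has been put in the affine-plus-bounded form), and then carrying out the boundedness and degree bookkeeping on each piece. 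The additive, scalar and base cases, by contrast, are routine propagation.
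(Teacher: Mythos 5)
Your overall strategy coincides with the paper's. The paper runs its induction on the degree rather than on the term structure, but in both versions everything reduces to the single nontrivial constructor $\psi_1*\psi_2$, handled by first deciding, precell by precell, whether $\ord\psi_1=0$ and whether $\ord\psi_2=0$; your base, additive and scalar cases, the observation that every $*$-factor has constant order $\ord s_{ij}$ and is therefore bounded, and your derivation of the ``in particular'' clause all match what the paper does.

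The step you explicitly leave open --- turning ``$\ord\psi_i=0$ or not'' into an honest finite refinement by precells --- is, however, a genuine gap as written, and the route you hint at (``additional applications of the hypothesis to the bounded remainders $b_i$'') is not what closes it: re-applying the hypothesis to $b_i$ only reproduces the sum-of-products form you already have and does not by itself yield precell conditions equivalent to $\ord\psi_i=\kappa$. The paper's mechanism is a two-regime argument. Writing $\psi_i=\sum_j g_j$, every summand with $(r_j,r_{j'})\neq(0,1)$ is a product of bounded factors and hence bounded below by some fixed $l$, so for $\kappa<l$ the condition $\ord\psi_i=\kappa$ is equivalent to $\ord(q_it+c_i(x))=\kappa$ for the bare linear term --- a precell condition, and this is the only part your sketch covers. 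For $\kappa\ge l$ (which is exactly the case needed for $\kappa=0$ whenever $l\le 0$, so it cannot be skipped) one first imposes the precell condition $\ord(q_it+c_i(x))\ge l$, after which all summands take values in $\pi^lR$; the condition $\ord\psi_i=\kappa$ then becomes a finite disjunction over the finitely many tuples of residues of the $g_j$ modulo $\pi^{\kappa+1}R$ whose sum has order $\kappa$, and each residue condition on a $g_j$ unwinds to congruence conditions on its individual factors, i.e.\ conditions on expressions linear in $t$ (for $q_{ij}t+b_{ij}(x)$ and $v_{ij}t+c_{ij}(x)$) or on $x$ alone (for $a_{ij}(x)$) --- all of precell shape. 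With that insertion your argument goes through; the remaining case analysis on each piece (replacing $g(\psi_i)$ by $1$ or by $\psi_i$ and expanding the product of two affine-plus-bounded units) is the same bookkeeping the paper dismisses as routine.
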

\begin{proof}
 The proof is by induction on the degree of the $\Lm_*$- polynomial.
It is easy to see that any $\Lm_*$-polynomial in $K_{QFD_*^k}[t]$ of degree 1 can be written in the form
\[ \sum_i s_i[a_i(x)*(r_it+b_i(x))] +vt+c(x),\]
with $s_i,r_i,v \in K$, and $a_i(x),b_i(x), c(x) \in QFD_*^k$. This is clearly of the form we proposed.\\ Assume now that the lemma holds for $\Lm_*$-polynomials of degree $n$. 
Let $g_{n_0},g_{n_1}$ be two $\Lstar$-polynomials with respective degrees $n_0,n_1<n$. It is sufficient to show that if $n_0+n_1>n$, the lemma still holds for $g_{n_0}*g_{n_1}$. 
\\
Applying the lemma for $g_{n_0}$ and $g_{n_1}$, we find a partitioning of $K^{k+1}$ in precells $A$ such that on each cell $A$, $g_{n_k} = \sum_j g_{n_k,j}$, where each $g_{n_k,j}$ is of the form
\begin{equation*}g_{n_k,j}= \prod_{i=1}^{n_{k,j}}s_{ij}[a_{ij}(x)*(q_{ij}t +b_{ij}(x))]\cdot \prod_{i=1}^{n_{k,j'}}(v_{ij}t +c_{ij}(x)),\end{equation*}with $1 \leqslant n_{k,j} + n_{k,j'} \leqslant n_k$. The main point we need to check is that it is possible to partition $A$ in smaller precells on which  a  condition of the form
\begin{equation}\ord g_{n_k} \ \square \ 0, \label{eq:lstarzero}\end{equation}
(where $\square$ may denote `$<$', `$=$' or `$>$') holds for both $g_{n_0}$ and $g_{n_1}$. 

Let us first check that this is indeed true. 
Note that  $\ord s_{ij}[a_{ij}(x)*(q_{ij}t +b_{ij}(x))] = \ord s_{ij}$. Moreover, by the induction hypothesis we know that  unless $(n_{k,j}, n_{k,j'}) =(0,1)$, all functions $v_{ij}t +c_{ij}(x))$ are bounded, and hence $g_{n_k,j}$ is bounded, say with lower bound $l_{k,j}$.
The order of $g_{n_k}$ can only be smaller than $ l=\min_j l_{k,j}$ if  $g_{n_k}$ contains a linear term $vt+c(x)$ for which $\ord (vt+c(x)) < l$. 
Therefore the condition $\ord g_{n_k}<l$ is equivalent to $\ord(vt+c(x))<l$, which is a precell condition.\\
How does this help us to express that $\ord g_{n_k} =0$? By the reasoning above, we can express whether $\ord g_{n_k}<l$ or $\ord g_{n_k}\geqslant l$ using precell conditions. For a fixed integer $\kappa \in \Z$, the condition $\ord g_{n_k} = \kappa$ can then also be expressed: if $\kappa <l$, this is equivalent with $\ord (vt+c(x)) = \kappa$, and for $\kappa \geqslant l$ we can proceed as follows.
Require that $\ord g_{n_k}\geqslant l$. This implies that  each term $g_{n_k,j}$ is bounded, and that we can express that $g_{n_k,j} \equiv \alpha \mod \pi^{\kappa+1} R$ (for $\alpha \in K$ with $\ord \alpha \geqslant l$) using only conditions linear in $t$. Combining such conditions we can describe all possible cases where $\ord g_{n_k} = \kappa$. 

To conclude the proof, partition $K^{r+1}$ in precells $A$ such that conditions of type \eqref{eq:lstarzero} hold on each precell. The claims from the lemma can now be checked easily.
\end{proof}

\begin{proposition}\label{prop:lm*poly}
Let $f(x,t)$ be in $K_{QFD_*^k}[t]$. There exists a finite partition of $K^{k+1}$ in precells, such that on each precell $C$, one of the following situations occurs
\begin{enumerate}
\item  The function $f(x,t)$ is unbounded on $C$ and there exist $q\in K$ and $c(x) \in QFD_*^k$ such that  for all $(x,t)\in C$,
\begin{eqnarray*}
\ord f(x,t) &=& \ord (qt-c(x)),\\
\rho_{n,m}(f(x,t))&=& \rho_{n,m}(qt-c(x)),
\end{eqnarray*}
\item There exist a function $c(x) \in QFD_*^k$ and $r \in \N$,  such that for all $(x,t)\in C$:
\begin{equation*}f(x,t) = a_0(x) +a_1(x)(t-c(x))+ \ldots+ a_r(x)(t-c(x))^r,\end{equation*}
where $a_i(x) \in QFD_*^k$,  and $[t-c(x)]$ and the $a_i(x)$ are all bounded functions.
\end{enumerate}
\end{proposition}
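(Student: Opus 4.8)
The plan is to run $f$ through the normal form of Lemma~\ref{lemma:lm*poly} and then keep refining the precell partition until every occurrence of $*$ has disappeared and the dichotomy becomes visible. By Lemma~\ref{lemma:lm*poly} we may, after a finite partition into precells, assume $f = \sum_j g_{r,j}$ with each $g_{r,j}$ of the shape~\eqref{eq:lm*poly}, so in particular $f(x,t) = [qt + c_0(x)] + \beta(x,t)$ with $q \in K$, $c_0 \in QFD_*^k$ and $\beta$ bounded. Next I would eliminate $*$: each factor $s_{ij}[a_{ij}(x) * (q_{ij}t + b_{ij}(x))]$ equals $s_{ij}\, g(a_{ij}(x))\, g(q_{ij}t + b_{ij}(x))$, and the conditions ``$\ord a_{ij}(x) = 0$'' and ``$\ord(q_{ij}t + b_{ij}(x)) = 0$'' are precell conditions, so after a further refinement each such factor is, on the precell, one of $s_{ij}$, $s_{ij}a_{ij}(x)$, $s_{ij}(q_{ij}t + b_{ij}(x))$, $s_{ij}a_{ij}(x)(q_{ij}t + b_{ij}(x))$ --- in every case a polynomial of degree $\leq 1$ in $t$ with $QFD_*^k$-coefficients, of order identically $\ord s_{ij}$ (since $g$ only takes unit values), hence bounded; and whenever such a factor is genuinely non-constant in $t$ its order is $0$, so it is a bounded non-constant linear function of $t$. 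Thus on each precell $f$ has become an honest polynomial in $t$ of degree $\leq r$: the single linear term $qt + c_0(x)$ (contributed by the $g_{r,j}$ with $(r_j, r_j') = (0,1)$) plus a bounded polynomial $\beta(x,t)$ that is a sum of products of bounded linear-in-$t$ factors.

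Now fix $d \in K^{\times}$ with $\ord\beta(x,t) \geq \ord d$ on the precell and, for the modulus $m$ in play, put $d' := d\pi^{-m}$. I would split according to the precell condition $\ord(qt + c_0(x)) \leq \ord d'$ versus $\ord(qt + c_0(x)) > \ord d'$. On the first piece $\ord\beta \geq \ord d = \ord d' + m\,\ord\pi \geq \ord(qt + c_0(x)) + m\,\ord\pi$, so adding $\beta$ changes neither the valuation of $qt + c_0(x)$ nor its coset modulo $Q_{n,m}$; hence $\ord f = \ord(qt + c_0(x))$ and $\rho_{n,m}(f) = \rho_{n,m}(qt + c_0(x))$ throughout, and with $c(x) := -c_0(x)$ this is alternative~(1) whenever $f$ is unbounded there. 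On the second piece both summands have order $> \ord d'$, so $f$ is bounded; I treat it together with the subcase of the first piece where $f$ happens to be bounded.

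It remains to bring the bounded case into the stated polynomial form with a single center $c(x)$. If $f$ is constant in $t$ on the precell, take $r = 0$, $a_0(x) := f$, and any $c(x)$. Otherwise $f$ depends on $t$, hence by the resolution of $*$ above, some linear-in-$t$ factor occurring in $f$ is bounded and non-constant --- either $qt + c_0(x)$ itself (which is then bounded, as $f - \beta$, with $q \neq 0$) or a non-constant factor inside one of the bounded $g_{r,j}$ --- say of the form $\alpha(t - c'(x))$ with $\alpha \in K^{\times}$, $c' \in QFD_*^k$ and $t - c'(x)$ bounded; set $c(x) := c'(x)$. Every other bounded linear-in-$t$ factor is $\alpha''(t - c''(x))$ with $t - c''(x)$ bounded or else constant in $t$; since $c'(x) - c''(x) = (t - c''(x)) - (t - c'(x))$ is a difference of bounded functions it is bounded, so each such factor rewrites as $\alpha''(t - c(x)) + \alpha''(c(x) - c''(x))$, and $qt + c_0(x)$ as $q(t - c(x)) + (qc(x) + c_0(x))$, all new coefficients being bounded functions which, by the closure properties of $QFD_*^k$ recorded earlier (in particular multiplication on bounded sets, cf.\ Lemma~\ref{lemma:exqfd} and the preceding lemmas), again lie in $QFD_*^k$. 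Expanding each product $g_{r,j}$ and collecting powers of $(t - c(x))$ then gives $f = \sum_{i=0}^{r} a_i(x)(t - c(x))^i$ with $a_i \in QFD_*^k$ bounded and $t - c(x)$ bounded --- alternative~(2).

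The genuinely new ingredient is already in Lemma~\ref{lemma:lm*poly}; here the two points needing care are the resolution of every $*$ by precell conditions, which both turns $f$ into a true polynomial and confines all higher-degree behaviour in $t$ to regions where $t$ is bounded, and the reconciliation of the various linear factors around one common center $c(x)$ while keeping $t - c(x)$ and every coefficient bounded --- for which the stability of boundedness under differences of centers is decisive. I expect this bookkeeping, rather than any single estimate, to be the main obstacle; the valuation and $\rho_{n,m}$ domination step is routine once the splitting threshold is taken far enough below $\ord\beta$.
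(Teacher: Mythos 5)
Your proposal is correct and follows essentially the same route as the paper's proof: apply Lemma~\ref{lemma:lm*poly}, split at a valuation threshold $m$ below the lower bound of the bounded part to get case~(1), then on the complementary (bounded) piece resolve each occurrence of $*$ by precell conditions on the orders of its arguments, recentre all linear factors around a single $c(x)$ using that differences of centers are bounded, and expand. The only cosmetic difference is that you take any bounded non-constant linear factor as the common center, whereas the paper takes the one minimizing $\ord(t-c_{ij}(x))$; both suffice here.
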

\noindent Remark: If $f(x,t)$ is  in $K_K[x]$, then also $c(x) \in K_K[x]$.
\begin{proof}
By Lemma \ref{lemma:lm*poly}, there exists $l\in \Z$ such that 
\[f(x,t) = qt-c(x) + (\text{terms of order } \geqslant l).\]
This implies  that on $ \{(x,t) \in K^{k+1} \ | \ \ord (qt-c(x)) <l-m\}$, it holds that
\[\ord f(x,t) = \ord (qt-c(x)) \quad \text{and} \quad \rho_{n,m}(f(x,t)) = \rho_{n,m}(qt-c(x)).\]
Moreover, on $C = \{(x,t) \in K^{k+1} \ | \ \ord (qt-c(x)) \geqslant l-m\}$, if we write
\begin{equation*}f_{|C}(x,t)= \sum_j\left[\prod_{i=1}^{r_j}s_{ij}[a_{ij}(x)*(q_{ij}t -b_{ij}'(x))]\cdot \prod_{i=1}^{r_{j'}}(v_{ij}t -c_{ij}'(x))\right],\end{equation*}
then for all $i, j$, the functions $[q_{ij}t-b'_{ij}(x)]$, and $[v_{ij}t-c'_{ij}(x)]$ are bounded. If $q_{ij} \neq 0$, write $q_{ij}t -b_{ij}'(x) = q_{ij}(t-b_{ij}(x))$, and analogously for $v_{ij}$. Partition $C$ further in precells on which there is some linear term $t-c(x)$ such that
\[\ord (t-c(x)) = \min_{i,j} \{\ord (t-c_{ij}(x)), \ord (t-b_{ij}(x))\}.\]
We can then write 
\[t-c_{ij}(x) = [t-c(x)] +[c(x)-c_{ij}(x)],\]
where the order of $c(x)-c_{ij}(x)$ is bounded, since
\[\ord [c(x)-c_{ij}(x)] = \ord [(t-c_{ij}(x)) - (t-c(x))] \geqslant \ord (t-c(x)).\]
(And similarly for $c(x)-b_{ij}(x)$.)
By partitioning further if necessary, we may assume that either $\ord (t-b_{ij}(x)) \neq -\ord q_{ij}$ for all $(x,t) \in C$ (in which case we can replace the factor $s_{ij}(a_{ij}(x)*q_{ij}(t-b_{ij}(x)))$ by $s_{ij}(a_{ij}(x)*1))$, or $\ord (t-b_{ij}(x)) = -\ord q_{ij}$ for all $(x,t) \in C$. In this last case,
\[a_{ij}(x)*[q_{ij}(t-b_{ij}(x)] = q_{ij}(a_{ij}(x)*1)(t-b_{ij}(x)).\] We can use this to rewrite $f_{|C}(x,t)$ as
\begin{align*}
 \sum_j a_j(x)&\left[\prod_{i=1}^{\widetilde{r_j}}s_{ij}'[(a_{ij}(x)*1)(t -c(x))+ (a_{ij}(x)*1)(c(x)-b_{ij}(x))]\cdot \right.\\
&\  \left. \prod_{i=1}^{r_{j'}}v_{ij}[(t -c(x))+(c(x)-c_{ij}(x))]\right],\end{align*}
where the first product now contains all factors $(t-b_{ij})$ for which we know that $\ord t-b_{ij}(x) = -\ord q_{ij}$ on the given precell $C$.
Note that all factors occuring in this expression are bounded. This simplifies to a sum of terms of the form $d_{ij}(x)(t-c(x))^i$, where all $d_{ij}(x)$ are bounded functions belonging to $QFD_*^k$.
\end{proof}

\section{Cell decompostion} \label{sec:semi-bound}

%

\noindent We will show that $p$-adically closed fields $K$ admit cell decomposition in the language $\Lm_*$, using a method that is based on Denef's \cite{denef-86} proof of cell decomposition for semi-algebraic sets.
The following two propositions form the core of our proof:

\begin{proposition}[Cell Decomposition-preparation I]
Let $t$ be one variable and $x=(x_1,\ldots, x_k)$. Let $f(x,t)$ be in $K_{QFD_*^k}[t]$. There exists a finite partition of $K^{k+1}$ in cells $A$, such that on each cell, there is a center $c(x) \in QFD_*^k$ and $l \in \Z$ such that one of the following is true on each $A$. 
\begin{enumerate}
\item The function $f(x,t)$ is unbounded on $A$ and there exists  $q\in K$ such that  \[\ord f(x,t)= \ord (qt-c(x)).\]
\item The function $f(x,t)$ is bounded on $A$, and we can expand $f(x,t)$ as
\[ f(x,t) = a_0(x) +a_1(x)(t-c(x))+ \ldots + a_r(x)(t-c(x))^r,\]
with $a_i(x) \in QFD_*^k$; the $a_i(x)$ and $[t-c(x)]$ are bounded functions, and
there exists $n_0\in \N$ such that \[\ord f(x,t)-\min_i \ord [a_i(x)(t-c(x))^i] \leqslant n_0.\] 
\end{enumerate}
\end{proposition}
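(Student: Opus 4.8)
The plan is to derive this statement essentially as a refinement of Proposition~\ref{prop:lm*poly}, upgrading the precell partition to a cell partition and adding the quantitative control on the order of $f$. Starting from Proposition~\ref{prop:lm*poly}, I first apply the Lemma-Definition that every $\Lstar$-precell is a finite union of $\Lstar$-cells, so without loss of generality I may work on a single cell $A$ with a fixed center (the center of the cell structure need not yet be the $c(x)$ I want, but since centers are in $QFD_*^k$ I can change centers freely and re-apply the cell partition lemma, as is standard). On each piece I am in one of the two cases of Proposition~\ref{prop:lm*poly}. In case~(1) there is nothing to add: $f(x,t)$ is unbounded and $\ord f(x,t) = \ord(qt-c(x))$, which is exactly conclusion~(1) here. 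So the real work is in case~(2).

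In case~(2) I have $f(x,t) = a_0(x) + a_1(x)(t-c(x)) + \cdots + a_r(x)(t-c(x))^r$ with all $a_i(x)$ and $[t-c(x)]$ bounded. The only thing missing is the uniform bound $\ord f(x,t) - \min_i \ord[a_i(x)(t-c(x))^i] \leqslant n_0$. The idea is the usual Denef-style Hensel/Newton-polygon argument: the difference between $\ord f$ and $\min_i \ord[a_i(x)(t-c(x))^i]$ can only be large when there is cancellation among the leading terms, i.e.\ when $(t-c(x))$ sits near a root of the polynomial $\sum a_i(x) T^i$ modulo a suitable power of $\pi$. More precisely, I would argue as follows: the quantity $\min_i \ord[a_i(x)(t-c(x))^i]$ is itself a minimum of finitely many affine-in-$\ord(t-c(x))$ expressions, so by partitioning $A$ into finitely many cells according to which index achieves the minimum (and whether ties occur), I reduce to the case where a single index $i_0$ (or at most two) achieves it. If a single index achieves the minimum strictly, then $\ord f = \min_i \ord[a_i(x)(t-c(x))^i]$ and $n_0 = 0$ works. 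When two or more indices tie, I need to bound how much the order of the sum can exceed the common value; this is controlled by passing to the residue field modulo $\pi^{m}$ and observing, via a compactness/Hensel argument on the bounded coefficients, that beyond some fixed $n_0$ (depending only on $r$ and the number of residues, hence uniform) the condition $\ord f > \min_i \ord[a_i(x)(t-c(x))^i] + n_0$ forces $(t-c(x))$ into a region that can be absorbed by recentering (replacing $c(x)$ by $c(x) + (\text{the relevant approximate root})$, which is again in $QFD_*^k$ by the lemmas of Section~1, in particular using $\div_\gamma$ and the $\Lstar$-polynomial machinery) and then re-expanding, which strictly lowers $r$ or the defect. An induction on $r$ then closes this off.

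Concretely, the recursion is: assume the proposition for degree $<r$. On the locus where the defect is $\le n_0$ for the obvious $n_0$, we are done. On the complementary locus the leading behaviour of $\sum a_i(x)(t-c(x))^i$ vanishes to high order, so one extracts an approximate multiple root $\mu(x) \in QFD_*^k$, sets $c'(x) = c(x) + \mu(x)$, re-expands $f$ around $c'(x)$ — the new coefficients $a_i'(x)$ are again in $QFD_*^k$ (they are $\Lstar$-polynomial combinations of the $a_i$ and $\mu$, all bounded on the relevant cell, using Lemma~\ref{lemma:lm*poly}) — and the new expansion has strictly smaller "defect degree", so the induction hypothesis applies after a further finite partition into cells. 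Throughout, each partition is into precells, which by the Lemma-Definition refines to cells, and all auxiliary functions stay in $QFD_*^k$, so the final partition is genuinely into $\Lstar$-cells.

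The main obstacle I expect is exactly the uniformity of $n_0$: one must ensure the number of recentering steps, and the bound extracted at each step, does not depend on $x$, only on $r$ (and on the fixed data $n,m$ of the ambient field). This is where the boundedness of all the $a_i(x)$ and of $[t-c(x)]$ is essential — it lets one run the Hensel/Newton-polygon estimate with coefficients in a fixed finite residue structure $R_K/\pi^N R_K$, so that "approximate root of multiplicity $\ge 2$" is a condition cut out uniformly, and the number of refinement steps is bounded by $r$. A secondary technical point is checking that the approximate roots $\mu(x)$ really land in $QFD_*^k$ and that recentering preserves quantifier-free definability of the cell conditions; this follows from Lemma~\ref{lemma:exqfd} and Lemma~\ref{lemma:lm*poly} together with the closure properties of $QFD_*^k$, but it needs to be spelled out carefully so that the cell conditions $\ord a_1(x)\ \square_1\ \ord(t-c'(x))\ \square_2\ \ord a_2(x)$ remain of the required form.
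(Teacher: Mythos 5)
Your overall strategy --- reduce to Proposition \ref{prop:lm*poly}, refine precells to cells, dispose of the unbounded case immediately, and then obtain the uniform bound $n_0$ in the bounded case by a Denef-style recentering/Newton-polygon induction on the degree --- is exactly the route the paper takes; the paper simply outsources that last step to Section 2.2 of Denef's cell decomposition paper. So the architecture is right. But there is one genuine gap in your justification of the recentering step, and it is precisely the point where the real work lies in this weak language.

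You claim that the approximate roots $\mu(x)$ used for recentering, and the recentered coefficients $a_i'(x)$, lie in $QFD_*^k$ ``by Lemma \ref{lemma:exqfd} and Lemma \ref{lemma:lm*poly} together with the closure properties of $QFD_*^k$''. That is not enough. The roots produced by the Hensel/Newton-polygon argument are defined only implicitly, as solutions $\xi(x)$ of $g(x,\xi(x))=0$ for an auxiliary polynomial $g$ of degree at most $d$ with bounded $QFD_*^k$ coefficients (and at another point of Denef's argument one needs a function $\eta(x)$ with $\ord \eta(x)=\frac{1}{k}\ord\theta(x)$). Showing that such implicitly defined functions belong to $QFD_*^k$ --- i.e.\ that substituting them into an arbitrary quantifier-free condition again yields a quantifier-free condition --- is the content of Lemmas \ref{lemma:2.3bis} and \ref{lemma:2.4bis}, and both are proved \emph{assuming $CDII_{d-1}$}: one Euclidean-divides the test polynomial by $g$ and applies Proposition \ref{prop:cd2} in degree $d-1$ to the remainder. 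Consequently the induction cannot be run on $CDI$ alone, as you set it up (``assume the proposition for degree $<r$''); it must be the joint induction in which $CDI_d$ is deduced from $CDI_{d-1}$ and $CDII_{d-1}$, and $CDII_d$ from $CDI_d$. Without bringing $CDII_{d-1}$ into your inductive hypothesis via these two lemmas, the step ``the new coefficients are again in $QFD_*^k$'' is unsupported, and this is not a formality: quantifier-free definability after substituting a Hensel root is exactly the kind of thing that can fail in a reduct of the ring language.
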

\begin{proposition}[Cell Decomposition-preparation II]\label{prop:cd2}
Let $t$ be one variable and $x=(x_1, \ldots, x_k)$. For $i=1,\ldots, r$, let $f_i(x,t)$ be  in $K_{QFD_*^k}[t]$. Let $n\in \N$ be fixed. There exists a finite partition of $K^k\times K$ into $\Lm_*$-cells $A$, such that on each cell $A$ (with center $c(x) \in QFD_*^k$), one of the following is true:
\begin{enumerate}
\item All functions $f_i(x,t)$ are unbounded on $A$ and for each $i$, \\either there exists $\lambda_i \in K, h_i(x) \in QFD_*^k$ such that
\[\ord f_i(x,t) =\ord h_i(x)\quad \text{and} \quad \rho_{n,m}(f_i(x,t)) = \rho_{n,m}( h_i(x)),\]
or there is $q_i\in K\backslash\{0\}$ and $\lambda_i \in K$ such that
\[\ord f_i(x,t) = \ord q_i(t-c(x)) \ \ \text{and} \ \ \rho_{n,m}(f_i(x,t)) = \rho_{n,m}( q_i(t-c(x))).\]
\item If $f_i(x,t)$ is unbounded on $A$, then there exist $\lambda_i \in K, h_i(x) \in QFD_*^k$ such that
\[\ord f_i(x,t) =\ord h_i(x)\quad \text{and} \quad \rho_{n,m}(f_i(x,t)) = \rho_{n,m}( h_i(x)).\]
If $f_i(x,t)$ is bounded on $A$, then there is a function $u_i(x,t)$ with $\ord u_i(x,t)=0$, ${\nu_i \in \N}$, $l' \in \Z$ and a bounded function $h_i(x) \in QFD_*^k$ such that \[f_i(x,t) = u_i(x,t)^n h_i(x)(t-c(x))^{\nu_i},\]
Moreover, also $t-c(x)$ is bounded on $A$.
\end{enumerate}
\end{proposition}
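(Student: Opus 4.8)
The plan is to bootstrap Proposition~\ref{prop:cd2} from the earlier structural results, chiefly Proposition~\ref{prop:lm*poly} and the first Cell-Decomposition-preparation proposition, treating each $f_i$ separately and then taking a common refinement of the finitely many partitions. The first step is to apply Proposition~\ref{prop:lm*poly} to each $f_i$. On precells where $f_i$ falls into case~(1) of that proposition we are essentially done with the ``unbounded'' alternative: there $\ord f_i(x,t) = \ord(q_i t - c_i(x))$ and $\rho_{n,m}(f_i) = \rho_{n,m}(q_i t - c_i(x))$, so after adjusting centers (see below) we land in the second disjunct of case~(1) of the proposition, with the minor point that one must pass from $qt-c(x)$ to $q(t-c(x))$, handled by partitioning according to whether $q=0$ or not and absorbing $c(x)/q \in QFD_*^k$ into the center. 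On precells where $f_i$ falls into case~(2), $f_i$ is bounded and expands as $\sum_{j=0}^{r} a_j(x)(t-c(x))^j$ with all $a_j(x)$ and $[t-c(x)]$ bounded; this is the input needed for the ``bounded'' branch.

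Next I would reconcile the centers. The $f_i$ may carry different centers $c_i(x)$, but the cell in the conclusion must use a single center $c(x)$. The standard trick (as in Denef's argument, and already used in the proof of Proposition~\ref{prop:lm*poly} via the linear-term minimum) is to partition $K^{k+1}$ further into precells on which one of the finitely many relevant linear forms $t - c_i(x)$ realizes the minimum order, say $\ord(t-c(x)) = \min_i \ord(t - c_i(x))$, and then rewrite $t - c_i(x) = [t-c(x)] + [c(x) - c_i(x)]$ with $\ord(c(x)-c_i(x)) \geq \ord(t-c(x))$ bounded on that precell. Re-expanding each bounded $f_i$ around the common $c(x)$ via the binomial theorem keeps all coefficients bounded and in $QFD_*^k$ (boundedness of products of bounded functions, and $QFD_*^k$ closed under the relevant operations). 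So after this refinement every bounded $f_i$ has the form $\sum_j a_{ij}(x)(t-c(x))^j$ with bounded $a_{ij}(x) \in QFD_*^k$ and bounded $[t-c(x)]$.

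The heart of the matter is passing from this polynomial expansion to the multiplicative normal form $f_i(x,t) = u_i(x,t)^n h_i(x)(t-c(x))^{\nu_i}$ with $\ord u_i = 0$, demanded in case~(2) for bounded $f_i$. Here I would invoke the first Cell-Decomposition-preparation proposition applied to the bounded $f_i$: it gives, after a further cell partition, a bound $\ord f_i(x,t) - \min_j \ord[a_{ij}(x)(t-c(x))^j] \leq n_0$, i.e. the ``dominant monomial'' controls the order up to a bounded discrepancy. Partitioning according to which index $j = \nu_i$ achieves this minimum and according to the (finitely many, since the discrepancy is bounded) possible values of $\ord f_i - \ord[a_{i\nu_i}(x)(t-c(x))^{\nu_i}]$, one writes $f_i = a_{i\nu_i}(x)(t-c(x))^{\nu_i} \cdot w_i(x,t)$ where $w_i$ is a unit times something of controlled order; absorbing the bounded-order factor into $h_i(x)$ and noting that a unit of $R_K$ that is an $n$-th power times a root-of-unity factor can be arranged on a finer partition (using that $(R_K \bmod \pi^m)^\times$-cosets, equivalently the $P_n$/$Q_{n,m}$-conditions, are precell conditions) yields $f_i = u_i(x,t)^n h_i(x)(t-c(x))^{\nu_i}$ with $\ord u_i = 0$. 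The case disjunction between (1) and (2) of the proposition is dictated by whether every dominant exponent $\nu_i$ can be taken to be $0$ or $1$ (so that $q_i(t-c(x))$ itself is the controlling term and we get the sharper case~(1)) versus the general situation; one decides this per precell. Finally one merges all these partitions and reindexes.

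The main obstacle I anticipate is exactly the extraction of the $n$-th power $u_i(x,t)^n$ while keeping $u_i$ in the admissible function class and $\ord u_i = 0$: one cannot simply take $n$-th roots definably, so the argument must instead reduce mod $\pi^m$ (for suitable $m$) and use that membership in $P_n$, or in a coset $\lambda Q_{n,m}$, is a quantifier-free precell condition, partitioning $K^{k+1}$ into finitely many precells on each of which the relevant unit lies in a fixed $P_n$-coset; then $u_i$ is chosen as a definable representative and the correction pushed into $h_i(x)$. A secondary nuisance is bookkeeping the interaction of the ``$*$'' factors $a_{ij}(x)*(q_{ij}t + b_{ij}(x))$ from Lemma~\ref{lemma:lm*poly}: on each precell $\ord(q_{ij}t + b_{ij}(x))$ is either $0$ (so $*$ acts as genuine multiplication after the identification $g(x)=x$) or nonzero (so the factor collapses to $a_{ij}(x)*1$), and one must partition to separate these cases before the multiplicative normal form is available — but this is precisely the manipulation already carried out in the proof of Proposition~\ref{prop:lm*poly}, so it can be quoted rather than redone.
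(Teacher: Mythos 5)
Your overall strategy (run the one-polynomial machinery on each $f_i$, take a common refinement, then reconcile centers and extract $n$-th powers of units via $P_n$-coset partitioning) matches the paper's, and your reconstruction of the $r=1$ case --- dominant monomial from $CDI_d$, then Hensel's lemma plus a partition into $P_n$-cosets to write the unit factor as $u^n$ times a constant absorbed into $h_i(x)$ --- is exactly the argument of Section 2.5, case 1 of Denef that the paper simply quotes. The gap is in the center reconciliation. You propose to take the linear form $t-c(x)$ of minimal order as the common center, re-expand each bounded $f_i$ around it by the binomial theorem, and then invoke the first preparation proposition to get the dominant-monomial bound for the new expansion. But the estimate $\ord f_i(x,t)-\min_j \ord[a_{ij}(x)(t-c(x))^j]\leqslant n_0$ is not stable under a change of center: $CDI$ produces that bound only together with a center of its own choosing, so applying it to the re-expanded polynomial hands you back a fresh partition with fresh centers and the argument does not close. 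Concretely, take $f=(t-c_1(x))(t-c_2(x))$ with $\ord(c_1-c_2)$ large; near $t=c_2$ the minimizing center is $c_1$, and in the expansion $(t-c_1)^2+(c_1-c_2)(t-c_1)$ both monomials have order $2\ord(c_1-c_2)$ while $\ord f$ is arbitrarily large, so no form $u^n h(x)(t-c_1)^{\nu}$ with $\ord u=0$ exists on such a piece.

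The paper avoids re-expansion altogether. After the single-polynomial step each $f_i$ sits in its own normal form with its own center $c_i(x)$, and the superfluous centers are eliminated \emph{multiplicatively}: on a cell where $\ord(t-c_a(x))<\ord(t-c_b(x))$ one has $\ord(t-c_a(x))=\ord(c_b(x)-c_a(x))$, and after fixing the $P_n$-classes of $t-c_a(x)$ and of $c_b(x)-c_a(x)$ one writes $t-c_a(x)=\frac{\lambda}{\lambda'}u^n(x,t)\,(c_b(x)-c_a(x))$ with $\ord u(x,t)=0$, so the factor $(t-c_a(x))^{\nu}$ in the normal form of $f_a$ becomes an $n$-th power times a function of $x$ alone (and the same trade replaces $t-c_i(x)$ by the surviving $t-c_{i_0}(x)$ among the bounded centers). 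This multiplicative rewriting of the center-dependent factors --- rather than a binomial re-expansion of the coefficients --- is the step your proposal is missing, and it is what lets a single center survive while every $f_i$ keeps the form $u_i^n h_i(x)(t-c(x))^{\nu_i}$ or collapses to the $h_i(x)$-only case.
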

These propositions will be proven together, using induction on the degree of the polynomials. The first (resp. second) proposition for polynomials $f(x,t)$ of degree $\leqslant d$ will be refered to as $CDI_d$ (resp. $CDII_d$).\\\\
{\it Proof of $CDI_d$, assuming $CDI_{d-1}$ and $CDII_{d-1}$.}\\
Let $f(x,t) \in K_{QFD_*^k}[t]$ be an $\Lstar$-polynomial of degree $d$. Applying Proposition \ref{prop:lm*poly}, we get a partitioning of $K^{k+1}$ in (pre)cells $A$. Then either $f_i(x,t)$ is unbounded on $A$ and already has the form specified in the proposition, or $f(x,t)$ is bounded on A and can be written as
\[f(x,t) = a_0(x) +a_1(x)(t-c(x))+ \ldots + a_d(x)(t-c(x))^d,\]
using bounded functions $a_i(x) \in QFD_*^k$. We also know that $t-c(x)$ is bounded on $A$.
It remains to be checked that there exists some fixed upper bound for \[\ord f(x,t)-\min_i \ord [a_i(x)(t-c(x))^i]. \] For this we can proceed in exactly the same way as in the original proof for semi-algebraic sets, see Section 2.2 of \cite{denef-86}. We have to be careful about the definability of the used functions, every time when multiplication or division is used. Careful inspection shows that no problems occur; the only essential change is that we need to replace Lemma 2.3 and 2.4 of \cite{denef-86} by Lemma \ref{lemma:2.3bis} and  Lemma \ref{lemma:2.4bis}, respectively.

 \hfill{$\square$}\newline
\\
{\it Proof of $CDII_d$, assuming $CDI_{d}$.}\\
First look at the case $r=1$. We will check that there exists a partition in (pre)cells, such that  on each (pre)cell, $f(x,t)$ has one of the forms specified in the proposition. For precells on which $ f(x,t)$ is unbounded and has small enough order, this is clear.  On other precells, $f(x,t)$ is bounded, so that we can prove our claim in exactly the same way as in case 1 of Section 2.5 of \cite{denef-86} (again being careful about definability for multiplication and division).
\\\\
If $r>1$, repeat the above procedure for each $f_i(x,t)$. As we observed before, a precell can always be partitioned as a finite union of cells, and hence we get a partition of $K^{k+1}$ in cells with a center $c_0(x)$, such that on each cell there are `centers' $c_1(x), \ldots, c_r(x)$ and integers $l_1, \ldots, l_r$ such that each of the polynomials behaves in one of the following three ways 
\begin{enumerate}
\item[(a)] $\ord f_i(x,t)<l_i$ and there is $q_i\in \Q$ such that
\[\ord f_i(x,t) = \ord q_i(t-c_i(x)) \ \ \text{and} \ \ \rho_{n,m}(f_i(x,t)) = \rho_{n,m}(q_i(t-c_i(x))),\]
\item[(a')]$\ord f_i(x,t) <l_i$ and there exists $h_i(x) \in QFD_*^k$ such that
\[\ord f_i(x,t) =\ord h_i(x)\quad \text{and} \quad \rho_{n,m}(f_i(x,t)) = \rho_{n,m}(h_i(x)),\]
\item[(b)] there is a function $u_i(x,t)$ with $\ord u_i(x,t)=0, \nu_i \in \N$ and $h_i(x) \in QFD_*^k$ such that \[f_i(x,t) = u_i(x,t)^n h_i(x)(t-c_i(x))^{\nu_i},\]
with $\ord h_i(x)>l_i, \ord t-c_i(x) >l_i$.
\end{enumerate}
We only need to consider functions of types $(a)$ and $(b)$. 
Put $I = \{0,\ldots, r\}$. Let $I_b \subseteq I$ be the set of all $i \in I$ for which $t-c_i(x)$ is bounded on $C$.
Note that we may suppose  (possibly after a further partitioning) that for all $i\in I \backslash I_b$, $\ord (t-c_i(x)) <\min_{j \in I_b} \{\ord (t-c_j(x))\}$.

 If $I_b \neq \emptyset$, choose an element $i_b \in I_b$. Let $i_a$ be an element of $I\backslash I_b$. Then 
 $\ord (t-c_a(x)) = \ord (c_b(x)-c_a(x))$. By partitioning the cell $C$ further if necessary, we can assure that there exist fixed values $\lambda,\lambda'$ such that $t-c_a(x) \in \lambda P_{n}$ and $c_b(x) - c_a(x) \in \lambda' P_{n}$ for all $(x,t) \in C$. But this implies that for each $(x,t)$ there is a unique value $u^n(x,t)$ with $\ord u(x,t) = 0$, such that 
\[t-c_a(x) = \frac{\lambda}{\lambda'}u^n(x,t)(c_b(x)-c_a(x)).\]
The above formula allows us to express $\ord f_a(x,t)$ and $\rho_{n,m}(f_a(x,t))$ as the order, resp. residue of a function in $QFD_*^k$ in the variables $x$. 
In this way we can eliminate all centers $c_i(x)$ for $i \not \in I_b$. Now partition $C$ further in smaller cells, such that on each cell there exists some $i_0 \in I_b$ for which
\[\ord t-c_{i_0}(x) =\min_{i\in I_b}\{\ord(t-c_i(x))\}.\] By the same argument we used above we can (after further partitioning), eliminate all centers except $c_{i_0}$.

If $I_b =  \emptyset$, i.e. if all functions $f_i(x,t)$ are of types $(a)$ and $(a')$, we can eliminate superfluous centers in the same way as before. 
\hfill{$\square$}\newline
\begin{lemma} [Assuming $CDII_{d-1}$]\label{lemma:2.3bis}
Let $t$ be one variable and write $x = (x_1, \ldots, x_k)$. Let $g(x,t)$ be in $K_{QFD_*^k}[t]$ and assume that $g(x,t)$ can be written in the form
\(g(x,t) = \sum_{i=0}^d b_i(x)t^i,\)
where the coefficient functions $b_i(x)$ only take values from $R$. Let $e\in \N$, $\kappa \in R$ be fixed. If $\xi(x)$ is a function from $K^k \to R$ such that for all $x\in K^k$
\begin{align}
&g(x,\xi(x))=0\\
&\xi(x) \equiv \kappa \mod \pi^{e+1}\\
& 0\leqslant \ord g'(x,\xi(x))\leqslant e,
\end{align}
where g' denotes the derivative of $g$ with respect to $t$, then $\xi(x) \in QFD_*^k$. 
\end{lemma}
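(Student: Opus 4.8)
The idea is to imitate the classical argument underlying Hensel's lemma / Denef's Lemma 2.3: the root $\xi(x)$ is unique in a residue class of controlled radius, and on that class it can be recovered from the remaining data by a quantifier-free condition of "is the unique element such that...'' type, combined with an application of $CDII_{d-1}$ to the derivative $g'(x,t)$. Concretely, first I would fix the value group element $e$ and the residue $\kappa$, and observe that the hypotheses force $\xi(x)$ to lie in $\kappa + \pi^{e+1}R$. The standard Hensel estimate then shows that there is at most one $z$ in this ball with $g(x,z)=0$ and $\ord g'(x,z)\le e$: if $z_1,z_2$ are two such, then $\ord(z_1-z_2)\ge e+1$, and a Taylor expansion of $g$ around $z_1$ gives $0 = g(x,z_2) = g'(x,z_1)(z_2-z_1) + (\text{higher order})$, forcing $\ord(z_2-z_1) > \ord(z_2-z_1)$ unless $z_1 = z_2$. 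So $\xi(x)$ is \emph{definable} by the formula ``$z \in \kappa+\pi^{e+1}R \wedge g(x,z)=0 \wedge \ord g'(x,z)\le e$'', and the content of the lemma is that this definition can be made \emph{quantifier-free}, i.e. $\xi \in QFD_*^k$.

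To get quantifier-freeness, I would run $CDII_{d-1}$ (applicable since $g'$ has degree $\le d-1$) on the single polynomial $g'(x,t)$ with the fixed modulus parameter $m$ large enough (say $m = e+1$), obtaining a partition of $K^{k+1}$ into $\Lm_*$-cells on each of which $g'(x,t)$ has one of the two normal forms of Proposition~\ref{prop:cd2}. On cells of the first type, $\ord g'(x,t)$ and $\rho_{n,m}(g'(x,t))$ are controlled either by a function $h(x)\in QFD_*^k$ (so the condition $0\le \ord g'(x,\xi(x))\le e$ becomes a quantifier-free condition on $x$ alone, not involving $\xi$) or by $\ord q(t-c(x))$; in the latter situation, since $t-c(x)$ is then bounded, one uses the expansion $g(x,t)=\sum a_i(x)(t-c(x))^i$ from the bounded case of $CDI_d$/Proposition~\ref{prop:lm*poly}. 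On this part, the root $\xi(x)$ of $g$ lying in the prescribed ball can be extracted: because $g'$ at the root has order $\le e$, the linear term of the Taylor expansion of $g$ around $c(x)$ (or around any fixed approximation) dominates, and one solves iteratively, $\xi(x) = c(x) - a_0(x)/a_1(x) + (\text{corrections of strictly larger order})$, each correction again a quotient of functions in $QFD_*^k$ with denominator of bounded order. The key technical point is that each division that appears has numerator and denominator of orders bounded below and the quotient also of bounded order, so it falls under $\div_\gamma$ of Lemma~\ref{lemma:exqfd} and hence stays inside $QFD_*^k$; this is exactly the "being careful about definability for multiplication and division'' caveat invoked in the proofs of $CDI_d$ and $CDII_d$. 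Finitely many Hensel-iteration steps suffice because the error order grows by at least $1$ each time and we only need to pin down $\xi(x)$ modulo $\pi^{e+1}$ plus one more step to identify it exactly, using $\ord g'(x,\xi(x)) \le e$.

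I would then assemble the global conclusion: on each cell of the $CDII_{d-1}$ partition refined as above, $\xi(x)$ agrees with an explicit $QFD_*^k$ function (a finite nested combination of the $a_i(x)$, $c(x)$, the constants, and $\div_\gamma$), and since a finite union of $\Lm_*$-cells covers $K^k$ and $QFD_*^k$ is closed under definition-by-cases over quantifier-free definable sets, $\xi \in QFD_*^k$. The main obstacle I anticipate is not the Hensel-lemma skeleton, which is routine, but precisely the bookkeeping that every intermediate quantity stays in $QFD_*^k$: one must check that the orders of all numerators, denominators and quotients arising in the Newton iteration are uniformly bounded on each cell (so that the restricted division $\div_\gamma$ applies), and that the "higher order terms'' in each Taylor expansion are genuinely bounded functions in the sense of Proposition~\ref{prop:lm*poly}, so that the $*$-multiplication used to form the products is legitimate. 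This is the same delicate point that forced the replacement of Denef's Lemmas 2.3--2.4 by the present lemmas, and it is where the bounded/unbounded dichotomy of Proposition~\ref{prop:cd2} does the real work.
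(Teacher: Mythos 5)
There is a genuine gap, and it sits at the heart of the lemma. Membership in $QFD_*^k$ is not the statement that $\xi(x)$ can be written as an explicit $\Lm_*$-term in functions already known to be in $QFD_*^k$; it is the statement that for \emph{every} quantifier-free set $S$, the pullback $\{(x,y) \mid (\xi(x),y)\in S\}$ is quantifier-free. Your proposal never engages with this: you never introduce the arbitrary test set $S$ and the arbitrary-degree test polynomials $f_i(y,t)$ whose conditions $\ord f_1(y,\xi(x))\ \square\ \ord f_2(y,\xi(x))$ and $f_3(y,\xi(x))\in\lambda P_n$ are what actually have to be made quantifier-free. Instead you try to \emph{extract} $\xi(x)$ as a finite nested expression in $c(x)$, the $a_i(x)$, and $\div_\gamma$. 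That cannot work: a root of a degree-$d$ polynomial with $d\geqslant 2$ is a genuinely algebraic function and is not equal to any finite $\Lm_*$-term; if it were, the lemma would be essentially vacuous. The specific claim that ``finitely many Hensel-iteration steps suffice \dots\ plus one more step to identify it exactly'' is false --- Newton iteration from $\kappa$ converges to $\xi(x)$ but no finite truncation equals it, and knowing $\xi(x)$ modulo a fixed power of $\pi$ does not determine $\rho_{n,m}(\xi(x)-c(x,y))$, since $\ord(\xi(x)-c(x,y))$ is not bounded above (the center $c(x,y)$ coming from $S$ can approximate $\xi(x)$ arbitrarily well). Your uniqueness argument in the first paragraph is fine but only yields definability \emph{with} a quantifier, which you yourself note is not the content of the lemma.

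You also apply the induction hypothesis $CDII_{d-1}$ to the wrong object. The paper's mechanism is: decompose the pullback of $S$ into conditions on polynomials $f_i(y,t)\in K_{QFD_*^r}[t]$; dispose of the unbounded ones via Proposition \ref{prop:lm*poly} and convert order comparisons to $P_n$-conditions via Lemma \ref{lemma:ordtopn}; then, for a bounded $f_3$ of degree $\geqslant d$, multiply by $b_d(x)^{N_d}$ (legitimate because all coefficients are bounded, so the products are definable) and perform Euclidean division by $g(x,t)$ --- this is where the hypothesis $g(x,\xi(x))=0$ is actually used --- to replace $f_3$ by a remainder of degree $\leqslant d-1$. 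It is to \emph{that} remainder that $CDII_{d-1}$ is applied, reducing everything to linear conditions $q\xi(x)-c(x,y)\in\lambda P_n$ and $\ord(\xi(x)-c(x,y))\ \square\ \ord a(x,y)$, which are then handled by Denef's original Taylor-expansion argument (relating $\ord(\xi(x)-c)$ and $\rho_{n,m}(\xi(x)-c)$ to $g(x,c)$ and $g'(x,c)$). Applying $CDII_{d-1}$ to $g'(x,t)$ alone, as you propose, gives no handle on the test polynomials and cannot close the argument. The careful bookkeeping of boundedness and restricted division that you flag at the end is indeed necessary, but it is a secondary concern; the primary missing idea is the reduction of arbitrary test polynomials modulo $g$ followed by the treatment of the residual linear conditions.
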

\begin{proof}
Let $S \subseteq K^r\times K$ be any quantifier-free definable set. We have to show that the set $A:= \{(y,x) \in K^r\times K^k \ | \ (y, \xi(x))\in S\}$ is also quantifier-free definable. A is a boolean combination of sets of relations of the forms
\begin{align}
&\ord f_1(y,\xi(x)) \ \square \ \ord f_2(y,\xi(x));\label{eq1:lemma2.3bis}\\
& f_3(y, \xi(x)) \in \lambda P_n,\label{eq2:lemma2.3bis}
\end{align}
where $f_i(y,t)$ is an $\Lm_*$-polynomial in $K_{QFD_*^r}[t]$.\\ 
What do we know about the $\Lm_*$-polynomials $f_i(y,t)$? 
Using Proposition \ref{prop:lm*poly}, we can find a partitioning of $K^{r+1}$, such that if $f_i(y,t)$ is unbounded on $C$, then  $\ord f_i(y,t) = \ord (q_it-c(y))$ and $\rho_{n,m}(f_i(y,t)) = \rho_{n,m}(q_it-c(y))$. In this case, we can replace $f_i(y,t)$ by a linear polynomial in $t$ when describing relations of the forms \eqref{eq1:lemma2.3bis} and \eqref{eq2:lemma2.3bis}.
\\
We may assume that given a condition \eqref{eq1:lemma2.3bis}, either both $f_1(y,t)$ and $f_2(y,t)$ are linear polynomials, or both are bounded and can thus be written in the form \eqref{eq8:lemma2.3bis}. If the $f_i(y,t)$ are bounded, apply Lemma \ref{lemma:ordtopn}, to obtain an $\Lm_*$-polynomial $\hat{f}(x,y,t)$ and $n\in \N$, such that (\ref{eq1:lemma2.3bis}) is equivalent to $\hat{f}(x,y,\xi(x)) \in P_n$.

Now assume that we have a condition of type \eqref{eq2:lemma2.3bis} and that $f_3(y,t)$ is bounded.
Note that  we can restrict our attention to those precells on which $\ord t \geqslant 0$. With these assumptions, the center $c(y)$ occurring in the expansion $f_3(y,t) = \sum c_{j}(x)(t-c(y))^i$ will also be a bounded function. 
Hence, there exists a partitioning (in precells)  such that $g(x,t)$ and $f_3(y,t)$  can be written as
\begin{equation}\label{eq8:lemma2.3bis} f_3(y,t) = \sum_{j=0}^{n_3} a_{j}(y)t^j\quad \text{and} \quad  g(x,t) = \sum_{i=0}^d b_i(x)t^i,\end{equation}
using bounded functions $a_{j}(y)$ and  $b_i(x)$. 
If  $f_3(y,t)$ has degree strictly bigger than $d$ in the variable $t$, we would like to use Euclidean division by $g(x,t)$ to reduce to an $\Lm_*$-polynomial of degree $\leqslant d-1$. However, at this point we cannot assure that the fractions $\frac{a_{j}(y)}{b_d(x)}$ will be definable. To remedy this, we can do the following. Put $N_d:= n_3-d+1$, and replace  $f_3(y,t)$ by $\hat{f}_3(x,y,t):= b_d(x)^{N_d}\cdot f_3(y,t)$. Since all coefficient functions occurring in this product are bounded, this product is definable, and moreover, after a further finite partitioning of $C$, the condition \eqref{eq2:lemma2.3bis} will be equivalent to a similar condition
\[ \hat{f}_3(x,y,t) \in \lambda'P_{n}.\]
(Note that we can assume that $b_d(x) \neq 0$ on $C$.) 
 We can now replace $\hat{f}_i(x,y,t)$ by its remainder after division by $g(x,t)$, thus obtaining a polynomial $\tilde{f}_3(x,y,t)= \sum_{j=0}^{d-1}a_{j}(x,y)t^j$ of degree at most $d-1$, whose coefficients $a_{j}(x,y) \in QFD_*^{k+r}$ are bounded functions.
Now apply $CDII_{d-1}$ to $\tilde{f}_3(x,y,t)$. We find that any relation of the form (\ref{eq2:lemma2.3bis}) is equivalent to (a finite number of) relations of the form \[qt-c(x,y) \in \lambda P_n,\] combined with a finite number of precell-conditions, and some quantifier-free definable relations in the variables $(x,y)$. Note that for any relation of this form, we may assume that $\ord t \geqslant 0$ and $c(x,y)$ is bounded. Indeed, if $\ord qt  > >\ord c(x,y)$, then the condition $qt-c(x,y) \in \lambda P_n$ is independent of the value of $t$.
%

Combining these observations, we have now reduced the problem to showing that any relation of the forms (remember that precells can be partitioned into $\Lm_*$-cells).
\begin{align*}
& \xi(x)-c(x,y) = \lambda P_n,\\
& \ord (\xi(x) -c(x,y)) \ \square \ \ord a(x,y),
\end{align*}
where $ c(x,y)$ and $ a(x,y)$ are bounded functions, definable without quantifiers. The proof of this case is exactly the same as the original proof of Lemma 2.3 of  \cite{denef-86}. (As always, one has to be careful with multiplication and division, but it is easy to see that no problems occur.)
\end{proof}
\begin{lemma} \label{lemma:ordtopn}Let $a(x,t), b(x,t) \in QFD_*^{k+1}$. Suppose that $a(x,t)$ and $b(x,t)$ are bounded, quantifierfree definable functions. There exists a function $c(x,t) \in QFD_*^{k+1}$  and $n \in \N$ such that for all $(x,t)\in K^{k+1}$,
 \[\ord a(x,t) \leqslant \ord b(x,t)\Leftrightarrow c(x,t) \in P_n.\]
 If $a(x,t), b(x,t)$ are in $K_{QFD_*^k}[t]$, then $c(x)$ is also in $K_{QFD_*^k}[t]$. \\Also, ${\ord c(x,t) \geqslant \min\{\ord a(x,t),\ord b(x,t)\}}$.
\end{lemma}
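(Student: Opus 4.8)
The plan is to reduce the order comparison $\ord a(x,t) \leqslant \ord b(x,t)$ to membership in a set $P_n$ by building a suitable quotient-like combination of $a$ and $b$, while being careful that this combination stays in $QFD_*^{k+1}$ (and in $K_{QFD_*^k}[t]$ when $a,b$ are). Since $a$ and $b$ are bounded, there is a fixed $\gamma \in \Z$ with $\ord a(x,t), \ord b(x,t) > \gamma$ on all of $K^{k+1}$. The key observation is that $\ord a \leqslant \ord b$ is equivalent to $\ord \frac{b}{a} \geqslant 0$ (when $a \neq 0$), and $\ord z \geqslant 0$ is a $P_n$-type condition: indeed $z \in R_K$ iff $1 + \pi z \in 1 + M_K$, and a coset of $1 + M_K^m$ (hence, via $\rho_{n,m}$, a finite union of cosets $\lambda P_n$) can be used to capture $\{z : \ord z \geqslant 0\}$ up to the usual finite-union bookkeeping. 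So my first step is to handle the locus where $a(x,t) \neq 0$: there I would use $\div_\gamma$ from Lemma \ref{lemma:exqfd} to form $w(x,t) := \div_\gamma(b(x,t), a(x,t))$, which lies in $QFD_*^{k+1}$, and note that on this locus $\ord a \leqslant \ord b \Leftrightarrow \ord w \geqslant 0$ — unless $\ord w$ falls outside the window allowed by $\div_\gamma$, i.e. unless $\ord \frac{b}{a}$ is very large; but $\ord \frac{b}{a}$ large just forces $\ord b > \ord a$, which is a determined outcome, so those points can be absorbed. Then $\ord w \geqslant 0$ is rewritten as $c(x,t) \in P_n$ for an appropriate $n$ and an appropriate affine-in-$w$ expression $c(x,t)$ (e.g. $c = 1 + \pi^j w$ for suitable $j$), giving $c \in QFD_*^{k+1}$.

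The second step is the locus $a(x,t) = 0$: there $\ord a(x,t) \leqslant \ord b(x,t)$ holds trivially (with the convention $\ord 0 = +\infty$, this is the case $\ord b \leqslant \infty$, always true), so I need a $c$ that is in $P_n$ on that locus and agrees with the previous choice elsewhere. Rather than splitting by cases — which would only show $P_n$-definability piecewise — I would absorb the two loci into one formula using the trick already visible in the paper: scale by a power of $a$. Concretely, replace the naive quotient by a combination like $c(x,t) := a(x,t)^{n-1}\big(a(x,t) + \pi^j b(x,t)\big)$ for a large enough $n$ and $j$ chosen from the uniform bound $\gamma$: when $a = 0$ this is $0 \in P_n$ vacuously if we include $0$, but more robustly one arranges $c$ to be a genuine $n$-th-power-times-unit exactly when $\ord a \leqslant \ord b$. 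The cleanest route: on the whole space, $\ord a \leqslant \ord b$ iff $a$ divides $b$ in $R_K$ iff $b/a \in R_K$ (interpreting $0 \in R_K$), and the function $(a,b) \mapsto a^{N}\cdot(\text{something})$ lets one clear denominators; since $a,b$ are bounded, all such products of bounded functions are in $QFD_*^{k+1}$, and the resulting $c$ satisfies $\ord c \geqslant \min\{\ord a, \ord b\}$ by construction. The transfer to $K_{QFD_*^k}[t]$ is automatic because all operations used (products, $\div_\gamma$, adding a constant from $K$) preserve the polynomial structure in $t$ with coefficients from $QFD_*^k$.

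The main obstacle I anticipate is the bookkeeping around $\div_\gamma$'s built-in cutoff and the $\rho_{n,m}$-to-$P_n$ translation: $\{z : \ord z \geqslant 0\}$ is not literally a single coset $\lambda P_n$ but a finite union of such, and $\div_\gamma$ returns $0$ (not the true quotient) when the quotient's order is too small, so one must verify that none of these ``exceptional'' outcomes destroy the equivalence — they don't, because each exceptional outcome pins down the truth value of $\ord a \leqslant \ord b$ outright, but making this airtight requires carefully choosing $n$ and the exponents relative to the uniform bound $\gamma$ and the modulus $m$ implicit in the bounded-ness of $a,b$. Everything else is routine manipulation of the kind already carried out in Lemmas \ref{lemma:exqfd} and \ref{lemma:lm*poly}.
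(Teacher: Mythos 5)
Your proposal diverges from the paper's proof, which is a one-line Macintyre-style trick: take $c(x,t) := a(x,t)^2 + \pi\, b(x,t)^2$ with $n=2$ when the residue characteristic is not $2$ (and $c := a^3 + \pi b^3$ with $n=3$ otherwise). If $\ord a \leqslant \ord b$ then $c = a^2\bigl(1 + \pi (b/a)^2\bigr)$ and the second factor is a square by Hensel, so $c \in P_2$; if $\ord a > \ord b$ then $\ord c = 1 + 2\ord b \not\equiv 0 \bmod 2$, so $c \notin P_2$. Boundedness of $a$ and $b$ is used only to make the squares and products definable. No quotients, no case split, and the bound $\ord c \geqslant \min\{\ord a, \ord b\}$ (in fact $\geqslant 2\min\{\ord a,\ord b\}$) is immediate.

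Your route has two genuine gaps. First, the reduction of ``$\ord w \geqslant 0$'' to a $P_n$-condition does not close: $R_K$ is \emph{not} a finite union of cosets $\lambda P_n$, since every nonempty coset $\lambda P_n$ contains elements of arbitrarily negative order; likewise ``$1+\pi w \in 1+M_K$'' is only \emph{contained in}, not equal to, a finite union of $P_n$-cosets, so the equivalence you need fails in one direction. (Expressing $\ord(w-1)\geqslant 1$ is itself an order comparison, so this step is circular.) Second, the formula you offer to avoid a piecewise answer, $c = a^{n-1}(a + \pi^j b)$, fails the ``only if'' direction: when $\ord a > \ord b + j$ one gets $\ord c = (n-1)\ord a + j\ord\pi + \ord b$, whose residue modulo $n$ varies with $\ord a - \ord b$ and can be $0$, so $c$ can land in $P_n$ even though $\ord a > \ord b$ (and when $\ord b < \ord a \leqslant \ord b + j$ the order of $c$ is not even controlled). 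The essential point you are missing is that $b$ must also be raised to the $n$-th power, so that in the bad case $\ord c \equiv \ord \pi \not\equiv 0 \bmod n$ independently of $\ord a$ and $\ord b$; that is exactly what $a^n + \pi b^n$ achieves. Finally, note the lemma asks for a single $c$ and a single $n$ valid on all of $K^{k+1}$, which your $\div_\gamma$-based case analysis does not by itself deliver.
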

\begin{proof}
If $\ord 2 =0$, put $c(x,t) := a(x,t)^2 + \pi g(x,t)^2$. Then $\ord a(x,t) \leqslant \ord b(x,t)$ iff $c(x,t) \in P_2$. Since $a(x,t)$ and $b(x,t)$ are bounded, the function $c(x,t) \in QFD_*^k$. \\ If  $\ord 3 = 0$,  we can take $c(x,t):= a(x,t)^3 + \pi(b(x,t))^3$. Then $\ord a(x,t) \leqslant \ord b(x,t)$ iff $c(x,t) \in P_3$.   
\end{proof}
\begin{lemma}[Assuming $CDII_{d-1}$]\label{lemma:2.4bis}
Take  $\theta(x) \in QFD_*^r$. Suppose that $\theta(x) \neq 0$ and that there exists $l \in \Z$ such that $\ord \theta(x) \geqslant l$ for all $x \in K^r$. Let $k \in \N$, with $ 2 \leqslant k <d$. If for every $x\in K^r$, $\ord \theta(x)$ is a multiple of $k$, then there exists a function $\eta: K^r\to K$ in $QFD_*^r$, such that
\[\ord \eta(x) = \frac1k\ord \theta(x), \quad \text{ for all } x\in K^r.\]
\end{lemma}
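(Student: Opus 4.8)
The plan is to extract a $k$-th root of $\theta(x)$ in the value group while keeping everything quantifier-free definable, the key point being that we are only allowed to use restricted multiplication. First I would reduce to the case where $\theta(x)$ takes \emph{bounded} values: since $\ord\theta(x)\geqslant l$, there is a fixed $q\in K$ with $\ord q < l$ such that $q^{-1}\theta(x)$ has order $\geqslant 0$ everywhere, and it suffices to extract the $k$-th root of $q^{\widetilde{N}}\theta(x)$ for a suitable multiple $\widetilde{N}$ of $k$ (so that $\ord q^{\widetilde{N}/k}$ can be split off at the end). Hence I may assume $\theta(x)\in QFD_*^r$ is bounded with $\ord\theta(x)\geqslant 0$, and I must produce $\eta(x)\in QFD_*^r$ bounded with $\ord\eta(x) = \tfrac1k\ord\theta(x)$.

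The heart of the argument is to \emph{define} $\eta(x)$ rather than merely assert its existence. For each fixed residue pattern, the condition ``$\ord\theta(x)$ is a multiple of $k$ lying in a fixed congruence class'' is quantifier-free expressible (using the $Q_{n,m}$'s / $\rho_{n,m}$, which are part of $\Lstar$), so after a finite partition of $K^r$ into quantifier-free definable pieces I may assume $\theta(x)\in\lambda P_k$ for a single fixed $\lambda$, say with $\ord\lambda$ a multiple of $k$, $\ord\lambda = k\mu$. On such a piece, for every $x$ there is a \emph{unique} $u(x)$ with $\ord u(x)=0$ and $\theta(x)=\lambda\, u(x)^k$; equivalently, setting $\pi^{k\mu}$ aside, $\pi^{-k\mu}\theta(x)$ is a $k$-th power times a unit of order $0$ dictated by its residue. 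I want to set $\eta(x) := \pi^{\mu}\lambda_0\, v(x)$ where $v(x)^k$ has the same residue as the appropriate rescaling of $\theta(x)$ and $\lambda_0$ carries the rest of $\ord\lambda$; the function $v(x)$ should be built as an explicit $\Lstar$-term in $\theta(x)$. Concretely, $\ord\eta(x)=\tfrac1k\ord\theta(x)$ requires only that $\ord\eta(x)^k=\ord\theta(x)$, i.e. that $\eta(x)^k$ and $\theta(x)$ lie in the same $P_{k}$-coset modulo a unit of order zero — and $\eta(x)\mapsto \eta(x)^k$ is a bounded operation (restricted multiplication applies since $\theta$, hence $\eta$, is bounded), so the defining condition on $v(x)$ is quantifier-free.

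The remaining issue is to verify that such a $v(x)$ (hence $\eta(x)$) actually lies in $QFD_*^r$, i.e. that composing a quantifier-free definable $S\subseteq K\times K^s$ with $\eta$ again gives a quantifier-free definable set. Here I would invoke $CDII_{d-1}$ exactly as in the proof of Lemma \ref{lemma:2.3bis}: the hypothesis $k<d$ guarantees that the ``$k$-th power'' relation $\{(x,w)\mid w^k=\text{(rescaled $\theta(x)$)}\}$, and more generally any $\Lstar$-polynomial of degree $<d$ evaluated at $\eta(x)$, can be put into the standard cell form (linear term plus bounded remainder) after a finite partition, so that conditions of the forms \eqref{eq1:lemma2.3bis} and \eqref{eq2:lemma2.3bis} with $t$ replaced by $\eta(x)$ become quantifier-free. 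I expect the main obstacle to be precisely this bookkeeping: ensuring at every stage that the constants $\lambda,\lambda_0,\mu,q$ and the partition are chosen uniformly, and that no unbounded multiplication sneaks in when forming $\eta(x)^k$ or when dividing out $\pi$-powers — this is the same ``be careful with multiplication and division'' caveat that recurs throughout, and it is handled by the same device (multiply through by a bounded power of a fixed element, partition, then read off the root from residues) used in Lemma \ref{lemma:2.3bis} and Lemma \ref{lemma:ordtopn}.
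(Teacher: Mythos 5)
Your overall plan coincides with the paper's: pin down a canonical $k$-th root of (a normalization of) $\theta$, and then verify membership in $QFD_*^r$ by running the machinery of Lemma \ref{lemma:2.3bis} with $g(x,t)$ replaced by $t^k-\theta_1(x)$, invoking $CDII_{d-1}$ and finishing as in Denef's original Lemma 2.4. Your last paragraph is exactly that verification step, and it is where the real content of the lemma lies.

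However, the middle of your argument, where you actually construct $\eta$, does not work as written. After partitioning so that $\theta(x)\in\lambda P_k$ for a fixed $\lambda$, it is \emph{not} true that $\theta(x)=\lambda\,u(x)^k$ with $\ord u(x)=0$: that would force $\ord\theta(x)=\ord\lambda$ to be constant on the piece, whereas $\ord\theta(x)$ genuinely varies (if it did not, the lemma would be trivial), and no finite partition can make it constant. Consequently your $\eta(x):=\pi^{\mu}\lambda_0\,v(x)$ with $v$ unit-valued has constant order and can never satisfy $\ord\eta(x)=\tfrac1k\ord\theta(x)$. Moreover, even the uniqueness claim fails: an order-zero $k$-th root is never unique when $K$ contains nontrivial $k$-th roots of unity ($-1$ and $1$ are both order-zero square roots of $1$), so ``dictated by its residue'' must be made precise. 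The paper's fix for both problems is the normalization step you are missing: first replace $\theta$ by $\theta_1\in QFD_*^r$ with $\ord\theta_1(x)=\ord\theta(x)$ and $\ac(\theta_1(x))\equiv 1 \bmod \pi^{2e+1}$, where $e=\ord k$; then Hensel's lemma applied to $t^k-\theta_1(x)$ yields a \emph{unique} root $\eta(x)$ with $\ac\eta(x)\equiv 1\bmod\pi^{e+1}$, whose order is automatically $\tfrac1k\ord\theta(x)$ whatever that (varying) order is. With $\eta$ defined this way, your concluding paragraph goes through and agrees with the paper's proof.
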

\begin{proof}
The proof is essentially the same as that of the original Lemma 2.4 of  \cite{denef-86}. It is not hard to see that there exists a quantifier-free definable function $\theta_1(x) \in QFD_*^r$ such that $\ord \theta_1(x) = \ord \theta(x)$ and $\ac(\theta_1(x)) \equiv 1 \mod \pi^{2e+1}$, where $e = \ord k $. Using Hensel's lemma, it can be shown that for every $x\in K^r$, there exists a unique value $\eta(x)$ such that $\eta(x)^k = \theta_1(x)$ and $\ac\eta(x) \equiv 1 \mod \pi^{e+1}$. What we have to check is that $\eta(x)$ is in $QFD_*^r$.  By  arguments that are very similar to the ones we used in the proof of Lemma \ref{lemma:2.3bis} (with $g(x,t)$ replaced by the polynomial $t^k - \theta_1(x)$), we can show that it suffices to check that the following relations are qfd relations:
\begin{align*}
& \eta(x)-c(x,y) = \lambda P_n,\\
& \ord (\eta(x) -c(x,y)) \ \square \ \ord a(x,y),
\end{align*}
where $c(x,y)$ and $a(x,y)$ are bounded , quantifierfree definable functions. For this we can use exactly the same argument as was used in Lemma 2.4 of \cite{denef-86}.
\end{proof}

\subsection{Elimination of Quantifiers}
\begin{theorem}
Any subset of $K^{k+1}$ that is quantifier-free definable in $\Lstar$, can be partitioned as a finite union of $\Lm_*$-cells.
\end{theorem}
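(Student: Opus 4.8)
The plan is to prove the statement by induction on $k$, using the two preparation propositions as the engine; the base case $k=0$ is a degenerate instance of the inductive step (the only $\Lstar$-cell in $K^{0}$ is the one-point set, and its quantifier-free definable subsets are $\emptyset$ and $K^{0}$). So assume that quantifier-free definable subsets of $K^{k}$ are finite unions of $\Lstar$-cells, and let $S\subseteq K^{k+1}$ be quantifier-free $\Lstar$-definable. Putting a quantifier-free defining formula in disjunctive normal form, and recalling that a finite union of finite unions of cells is again one, it suffices to handle one conjunction $\phi(x,t)$ of atoms and negated atoms. Collect the finitely many $\Lstar$-terms $f_{1},\ldots,f_{M}$ occurring in $\phi$ (these lie in $K_{K}[t]\subseteq K_{QFD_*^k}[t]$) and the finitely many exponents $n$ occurring in $P_n^*$-relations. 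Every atom of $\Lstar=(+,-,*,\overline c,|,\{P_n^*\})$ then has one of the forms $f_{i}(x,t)=0$, $\ord f_{i}(x,t)\,\square\,\ord f_{j}(x,t)$ with $\square\in\{<,\le,=,\ge,>\}$, or $f_{j}(x,t)\in f_{i}(x,t)\,P_n$.

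Next I would apply Cell Decomposition--preparation II (Proposition \ref{prop:cd2}) to $f_{1},\ldots,f_{M}$ with the single modulus $n:=\operatorname{lcm}$ of all occurring exponents and $m$ chosen large; since $Q_{n,m}\subseteq Q_{n',m'}$ whenever $n'\mid n$ and $m'\le m$, the resulting control is good enough for every atom. This produces a finite partition of $K^{k+1}$ into $\Lstar$-cells $A$, on each of which there is a common center $c=c_{A}(x)\in QFD_*^k$ such that every $f_{i}$ is either \emph{$x$-controlled} ($\ord f_{i}=\ord h_{i}(x)$ and $\rho_{n,m}(f_{i})=\rho_{n,m}(h_{i}(x))$ for some $h_{i}\in QFD_*^k$), or \emph{linearly controlled} ($\ord f_{i}=\ord q_{i}(t-c)$ and $\rho_{n,m}(f_{i})=\rho_{n,m}(q_{i}(t-c))$), or of the form $f_{i}=u_{i}(x,t)^{n}h_{i}(x)(t-c)^{\nu_{i}}$ with $\ord u_{i}\equiv 0$, $h_{i}\in QFD_*^k$ bounded, and $t-c$ bounded. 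It is enough to show $S\cap A$ is a finite union of $\Lstar$-cells for one such $A$.

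Substituting these normal forms into the atoms of $\phi$, I would show that on $A$ the set $S\cap A$ is cut out by a boolean combination of (a) quantifier-free definable conditions on $x$ alone and (b) \emph{precell conditions in $t$ over the fixed center $c(x)$}, i.e. conditions $\ord(t-c(x))\,\square\,\ord a(x)$, $\ord a(x)\,\square\,\ord a'(x)$, and $t-c(x)\in\lambda Q_{n,m}$ with $a,a'\in QFD_*^k$. An order comparison of two $x$-controlled functions becomes $\ord h_{i}(x)\,\square\,\ord h_{j}(x)$; if a linearly controlled or $u^{n}h(t-c)^{\nu}$-type function is involved, it becomes an inequality linear in $\ord(t-c(x))$ with $QFD_*^k$-data, which, after a quantifier-free partition of the $x$-space on the residue of the relevant difference of orders modulo the relevant $\nu$ and an extraction of roots via Lemma \ref{lemma:2.4bis}, is of form (b). A $P_n$-atom $f_{j}/f_{i}\in P_n$ is handled the same way: the $u^{n}$-factors are absorbed into $P_n$, $Q_{n,m}$-cosets multiply, and the leftover power $(t-c(x))^{e}$ with $e=(\nu_{j}-\nu_{i})\bmod n$, together with a further quantifier-free partition of the $x$-space making $\rho_{n,m}(h_{j}(x)/h_{i}(x))$ constant---possible since $K^{\times}/(K^{\times})^{n}$, hence $K^{\times}/Q_{n,m}$, is finite---turns the atom into a condition purely on $x$ (when $e=0$) or a condition $t-c(x)\in$ (fixed finite union of $Q_{n,m}$-cosets), which is a finite disjunction of conditions of form (b). Throughout, all auxiliary functions stay in $QFD_*^k$ (the recurring ``careful about $*$ and division'' point), and all extra partitions of the parameter space are quantifier-free definable.

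Finally I would reassemble: put the boolean combination describing $S\cap A$ back in disjunctive normal form; a disjunct is (a conjunction of quantifier-free $x$-conditions) $\wedge$ (a conjunction of type-(b) conditions). The $x$-part cuts out a quantifier-free definable $D\subseteq K^{k}$, which by the induction hypothesis is a finite union of $\Lstar$-cells $D_{\ell}$, and the conditions defining each $D_{\ell}$ themselves unpack recursively into precell conditions in $x_{1},\ldots,x_{k}$. Hence $(D_{\ell}\times K)$ intersected with the type-(b) conditions is a finite union of $\Lstar$-precells in $K^{k+1}$, so by the Lemma-Definition on $\Lstar$-precells it is a finite union of $\Lstar$-cells; taking the union over the finitely many $D_{\ell}$, disjuncts, and cells $A$ completes the proof. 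The step I expect to be the main obstacle is precisely the reduction of the $P_n^*$-atoms and of the order comparisons that see the $(t-c(x))^{\nu_{i}}$-factors: the bookkeeping of the exponents $\nu_{i}$, the use of Lemma \ref{lemma:2.4bis} to take roots, the appeal to finiteness of $K^{\times}/(K^{\times})^{n}$ to keep the $\rho_{n,m}$-partition of the parameter space finite, and the verification that every intermediate function remains quantifier-free definable so that the inductive hypothesis genuinely applies.
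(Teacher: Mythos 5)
Your proposal is correct and follows essentially the same route as the paper: apply Proposition \ref{prop:cd2} to the terms occurring in the formula, reduce each atom over the resulting cells to precell conditions in $t$ over the common center (using Lemma \ref{lemma:2.4bis} to extract roots when comparing terms carrying different powers of $t-c(x)$, and watching the definability of quotients), and then reassemble into cells. The only differences are organizational --- you work via disjunctive normal form, induction on $k$, and the precell lemma, where the paper reduces to the two prototype sets $D_1$, $D_2$ and invokes closure of cells under intersection --- and your treatment of the $P_n^*$-atoms is in fact more explicit than the paper's.
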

\begin{proof}
 First of all, we remark that the intersection of two $\Lm_*$-cells can again be written as a finite union of cells. For a related type of cells, a similar result was proved in \cite{clu-lee-2011}, Lemma 2.1. The proof given there can be applied (with some minor modifications) to $\Lstar$-cells. Therefore it suffices to check that the following sets can be partitioned as a finite union of cells:
\begin{align*}
D_1&:= \{(x,t)\in K^{k+1} \ | \ \ord f_1(x,t)\ < \ord  f_2(x,t)\},\\
D_2&:=\{(x,t)\in K^{k+1} \ | \ f_3(x,t) \in \lambda P_n \} ,
\end{align*}
where each $f_i(x,t)$ is an $\Lm_*$-polynomial in $K_{QFD_*^k}[t]$.
For $D_1$ we can do the following. Apply Proposition CDII to obtain a partition of $K^{k+1}$ in cells A. Then $D_1\cap A$ is equal to a set of one of the following forms 
\begin{align*}
I_1 &:= A \cap \{(x,t)\in K^{r+1} \ | \ \ord h_1(x)\ \square \ \ord h_2(x)\},\\
I_{1}' &:= A \cap \{(x.t) \in K^{r+1} \ | \ \ord q(t -c(x)) \ \square \ \ord  h_2(x) \},\\
I_2 &:= A \cap \{ (x,t) \in K^{r+1} \ | \ \ord a_1(x)(t-c(x))^{\nu_1} \ \square \ \ord a_2(x)(t-c(x))^{\nu_2}\},\\
I_3 &:=A \cap \{ (x,t) \in K^{r+1} \ | \ \ord a_1(x)(t-c(x))^{\nu_1} \ \square \ \ord h_2(x) \},
\end{align*}
with $h_i(x), a_i(x) \in QFD_*^k$, and $\square$ may denote either $<$ or $>$. Sets of types $I_1$ and $I_1'$ are already (intersections of $A$ with) cells. For  sets of type $I_2$ and $I_3$,  $a_i(x)$ and $t-c(x)$ are assumed to be bounded, say $\ord (t-c(x)) \geqslant l$. Suppose that $\nu_1 < \nu_2$. Note that  the
 condition \[\frac1{\nu_1-\nu_2}\ord \frac{a_1(x)}{a_2(x)} \ \square \ \ord (t-c(x))\] is either empty or trivial unless $\ord \frac{a_1(x)}{a_2(x)} \geqslant l(\nu_1-\nu_2)$, and hence we may assume that $\frac{a_1(x)}{a_2(x)}$ is definable.
It follows then from Lemma \ref{lemma:2.4bis} that a set of type $I_2$ is the intersection of $A$ with another cell, and thus can be partitioned as a finite union of cells. \\
Also for $I_3$ we have to be careful, since the function $\frac{h_2(x)}{a_1(x)}$ will probably not be definable. However, we can easily reduce this to the union of sets of type $I_1$ and $I_2$ using a further partitioning in sets $I_3 \cap \{(x,t) \in K^{k+1} \ | \ \ord h_2(x) \ \square \ \ord\pi^{(\nu_1+1)l}\}$.\\We can partition $D_2$ using a similar argument.
\end{proof}

\begin{theorem} \label{thm:celdec} Let $K$ be any $p$-adically closed field. Any $\Lm_*$-definable subset of $K^k$ can be partitioned as a finite union of $\Lm_*$-cells.
\end{theorem}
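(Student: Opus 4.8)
The plan is to derive the theorem from the previous one by quantifier elimination. Recall that the previous theorem gives, for every $m\geqslant 1$, that each quantifier-free $\Lstar$-definable subset of $K^{m}$ is a finite union of $\Lstar$-cells, and recall from its proof that a finite intersection of $\Lstar$-cells is again a finite union of $\Lstar$-cells. Given an $\Lstar$-definable $X\subseteq K^k$, I would write a defining formula in prenex form as $Q_1t_1\cdots Q_nt_n\,\psi(x_1,\dots,x_k,t_1,\dots,t_n)$ with $\psi$ quantifier-free, and argue by induction on $n$. The case $n=0$ is exactly the previous theorem. For $n\geqslant 1$, since the negation of a quantifier-free formula is again quantifier-free, the innermost quantifier $Q_nt_n$, if it is $\forall$, may be replaced by $\neg\exists t_n\neg$, so it suffices to prove the following claim: if $S\subseteq K^{m+1}$ is quantifier-free $\Lstar$-definable, then its projection $\pi(S)\subseteq K^m$ onto the first $m$ coordinates is quantifier-free $\Lstar$-definable. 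Granting this, the innermost quantifier is removed, $n$ drops by one, and after finitely many steps $X$ is quantifier-free definable, hence a finite union of $\Lstar$-cells by the previous theorem.

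To prove the claim, use the previous theorem to write $S$ as a finite union of $\Lstar$-cells; since projection commutes with finite unions, it suffices to project a single cell
\[C=\{(x,t)\in D\times K \mid \ord a_1(x)\ \square_1\ \ord(t-c(x))\ \square_2\ \ord a_2(x);\ t-c(x)\in\lambda P_n\},\]
with $D$ an $\Lstar$-cell in $K^m$ and $a_1,a_2,c\in QFD_*^m$. For fixed $x\in D$, the element $s:=t-c(x)$ ranges over all of $K$ as $t$ does, so $x\in\pi(C)$ if and only if there is $s\in\lambda P_n$ with $\ord a_1(x)\ \square_1\ \ord s\ \square_2\ \ord a_2(x)$. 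Because the value group $\Gamma_K$ of a $p$-adically closed field is a $\Z$-group and, for $\lambda\neq 0$, the set $\lambda P_n$ realizes exactly the classes of $\Gamma_K$ congruent to $\ord\lambda$ modulo $n$ (each of them cofinally in both directions), the existence of such an $s$ is equivalent to a finite disjunction of conditions on $\ord a_1(x)$ and $\ord a_2(x)$: order comparisons between $\ord a_1(x)$, $\ord a_2(x)$ and fixed integers, together with congruences modulo $n$ of $\ord a_1(x)$ and $\ord a_2(x)$. The order comparisons are quantifier-free expressible using the symbol `$|$' and its negation; the congruences are quantifier-free expressible because, as recalled in the introduction and the Preliminaries, the sets $P_N$ are finite unions of cosets $\lambda'Q_{n,m}$ (and conversely), so that the classes $\{y\mid\ord y\equiv j \bmod n\}$ are quantifier-free $\Lstar$-definable, the relevant relations being of the same kind as the $\rho_{n,m}$ already used in the preparation theorems. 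The degenerate cases only simplify matters: if $\lambda=0$ then $s=0$ and $\ord s=\infty$, and if some $a_i(x)$ vanishes then $\ord a_i(x)=\infty$; in each such case the solvability condition collapses to $D$, to $\emptyset$, or to $D$ intersected with a condition of the form $a_i(x)\neq 0$. Hence $\pi(C)$ is the intersection of the cell $D$ with a quantifier-free $\Lstar$-definable subset of $K^m$, so it is quantifier-free $\Lstar$-definable, which proves the claim; by the previous theorem it is in fact a finite union of $\Lstar$-cells.

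The point requiring the most care is the projection of a single cell, namely the passage from ``there exists $s\in\lambda P_n$ with prescribed constraints on $\ord s$'' to a quantifier-free $\Lstar$-condition on $x$. This rests on three things: that $\Gamma_K$ is a $\Z$-group, so that the statements about intervals and congruences in $\Gamma_K$ behave exactly as in Presburger arithmetic; that the sets $P_N$ and the order-residue classes modulo $n$ are finite unions of cosets $\lambda'Q_{n,m}$, which is what makes the resulting conditions quantifier-free expressible in $\Lstar$; and a careful bookkeeping of the degenerate cases in which $\lambda=0$ or some $a_i(x)$ vanishes. Everything else --- the prenex induction, the reduction of $\forall$ to $\neg\exists\neg$, and the closure of finite unions of $\Lstar$-cells under intersection --- is formal and parallels the corresponding step of Denef's argument.
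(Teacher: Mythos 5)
Your proposal is correct and follows essentially the same route as the paper: reduce to eliminating a single existential quantifier over a cell, and translate the solvability of $\ord a_1(x)\ \square_1\ \ord(t-c(x))\ \square_2\ \ord a_2(x)$, $t-c(x)\in\lambda P_n$ into a Presburger-style condition on $\ord a_1(x)$ and $\ord a_2(x)$ in the $\Z$-group $\Gamma_K$ (an order comparison shifted by $n-\zeta$ together with a congruence class, all quantifier-free expressible via the $Q_{n,m}$). You spell out the prenex induction and the degenerate cases more explicitly than the paper does, but the core computation is identical.
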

\begin{proof}
Let $C \subseteq K^{k+1}$ be the $\Lm_*$-cell 
\[C = \{(x,t) \in D \times K \ | \ \ord a_1(x) <
\ord(t-c) < \ord a_2(x), \ t-c \in \mu
P_n\},\] with $a_i(x) \in QFD_*^k$ and $D$
a definable subset of $K^{k}$.
Because of the previous theorem, it is sufficient to show that the following set can be defined without quantifiers:
\[\{(x,t) \in K^{k} \ | \ \exists t \in K: (x,t) \in C \}.\]
Note that 
 $A$ is in fact equal to
the following set:
\[ A = \{ x \in D \ | \ \exists \gamma \in \Gamma_K: \ord a_1(x) < \gamma< \ord
a_2(x), \ \gamma \equiv \ord \mu \mod n\}.\] Thus
$A$ is the set of all $x \in D$ satisfying
\begin{equation}\exists \gamma \in \Gamma_K: \frac{\ord
a_1(x)\mu^{-1}}{n} < \gamma <
\frac{\ord a_2(x)\mu^{-1}}{n}.\label{condforz}\end{equation}
Now if $\ord a_1(x)\mu^{-1} \equiv \zeta
\mod n$, for $0 \leqslant \zeta <n$, and , then condition
(\ref{condforz}) is equivalent with
\[ \ord a_1(x)\mu^{-1} + n - \zeta <
\ord a_2(x)\mu^{-1},\]which can be
simplified to
\[ \ord a_1(x) + n - \zeta <
\ord a_2(x).\]
 This completes
the proof, since the condition $\ord a_1(x)\mu^{-1}
\equiv \zeta \mod n$ is quantifierfree definable. \end{proof}

\section{Characterizations of definable functions}\label{sec:deffun}
In this section, we give some characterizations of  $\Lm_{*}$-definable functions. As a corollary, we will conclude that multiplication is only definable on bounded sets.
\begin{lemma}\label{lemma:linearpart}
For every $f(x) \in K_K[x]$ there exists a bounded $B(x) \in K_K[x]$ and $a_i, b \in K$ such that  
\[ f(x) = \sum_{i=1}^r a_ix_i +b  + B(x).\]
\end{lemma}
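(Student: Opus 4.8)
The statement is about functions in $K_K[x]$, i.e. $\Lm_*$-polynomials in several variables $x=(x_1,\dots,x_r)$ with constant parameters. The plan is to reduce to the one-variable case of Lemma~\ref{lemma:lm*poly} by peeling off the variables one at a time. First I would set up an induction on the number of variables $r$. For $r=1$, the last assertion of Lemma~\ref{lemma:lm*poly} (applied with $k=0$, so that $QFD_*^0$ consists just of constants) gives, after a finite partition of $K$ into precells, $f(x_1)=q x_1 + c + b(x_1)$ with $q,c\in K$ and $b$ bounded. The point that needs a small argument is that we want \emph{one} expression valid on all of $K$, not a piecewise one; but a precell partition of $K$ (a one-dimensional set) is controlled by finitely many order- and $Q_{n,m}$-conditions on $x_1$, so outside a bounded region all these conditions stabilize, and there $f$ agrees with a fixed affine function $q x_1 + c$; on the remaining bounded region $f$ is itself bounded (again by Lemma~\ref{lemma:lm*poly}, the bounded terms are bounded there and the linear term $qx_1+c$ is bounded on a bounded set), so absorbing the discrepancy into $B(x_1)$ works.

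\textbf{Inductive step.} For general $r$, view $f\in K_K[x_1,\dots,x_r]$ as an $\Lm_*$-polynomial in the single variable $t=x_r$ with coefficients that are $\Lm_*$-polynomials in $x'=(x_1,\dots,x_{r-1})$ with constant parameters — more precisely, coefficients lying in $K_{QFD_*^{r-1}}[t]$ is too weak; instead I would apply Lemma~\ref{lemma:lm*poly} directly with $k=r-1$ and $t=x_r$, noting (via the Remark after Proposition~\ref{prop:lm*poly}, or directly, since the parameters are constants) that the center $c(x')$ produced is itself in $K_K[x']$ and the bounded remainder $b(x',x_r)$ is in $K_K[x]$. This gives, after a precell partition, $f(x)=[q x_r + c(x')] + b(x',x_r)$ with $c(x')\in K_K[x']$, $q\in K$, and $b$ bounded. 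Now apply the induction hypothesis to $c(x')$: after a further partition, $c(x') = \sum_{i=1}^{r-1} a_i x_i + b' + B'(x')$ with $a_i,b'\in K$ and $B'$ bounded. Substituting, $f(x) = \sum_{i=1}^{r-1} a_i x_i + q x_r + b' + \bigl(B'(x') + b(x',x_r)\bigr)$, and $B'(x')+b(x',x_r)$ is bounded (sum of two bounded functions), and lies in $K_K[x]$. This is exactly the desired form with $a_r := q$, $b := b'$, $B(x) := B'(x')+b(x',x_r)$.

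\textbf{Removing the partition.} The one subtlety is the same as in the base case: Lemma~\ref{lemma:lm*poly} and the inductive step both deliver the conclusion only piecewise, over finitely many precells, and a priori with different affine parts $(a_i,b)$ on different pieces, whereas the statement asks for a single global expression. I would handle this by the same stabilization observation: the linear part $qx_r+c(x')$ produced by Lemma~\ref{lemma:lm*poly} does \emph{not} depend on the precell (inspection of the proof of Lemma~\ref{lemma:lm*poly} shows the coefficient $q$ and center $c(x)$ are the ``degree-one data'' of $f$, extracted uniformly), and only the bounded remainder is defined piecewise; since a finite union of bounded functions on a finite precell partition is again bounded, $B(x)$ is globally bounded. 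Thus the coefficients $a_i$ and $b$ can be chosen independently of the partition, and the partition disappears from the final statement.

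\textbf{Main obstacle.} The only genuine difficulty is bookkeeping: making sure that when we feed a coefficient $c(x')$ of an $\Lm_*$-polynomial back into the induction, it is again a legitimate element of $K_K[x']$ (not merely a $QFD_*^{r-1}$-function), and that boundedness is preserved under the substitutions and finite unions involved. Both follow from the construction in Lemma~\ref{lemma:lm*poly} — where $c(x)$ is built from the coefficient data of $f$ by the allowed operations $+,-,*,\overline{c}$ — together with the elementary facts that a sum of bounded functions is bounded and that a function which is bounded on each piece of a finite partition is bounded. No new estimates are needed beyond what is already in Lemmas~\ref{lemma:lm*poly} and the definition of boundedness.
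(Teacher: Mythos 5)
Your proof is correct and follows essentially the same route as the paper: induction on the number of variables, with the base case and the single-variable peeling step both supplied by the final assertion of Lemma~\ref{lemma:lm*poly}, and the induction hypothesis applied to the center $\tilde{c}(x_1,\dots,x_{r-1})$. Your extra care about globalizing the piecewise conclusion of Lemma~\ref{lemma:lm*poly} (uniform linear part, bounded remainder absorbed across the finite precell partition) addresses a point the paper's two-line proof leaves implicit, but it is the same argument.
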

\begin{proof}
If $r=1$, this is an immediate consequence of Lemma \ref{lemma:lm*poly}. If $r >1$, there exists a function $\tilde{c} \in K_K[x_{r-1}, \ldots, x_1]$ and a bounded function $\tilde{B}(x)$
\[c(x_r; x_{r-1},\ldots,  x_1) = qx_r + \tilde{c}(x_{r-1}, \ldots, x_1) + \tilde B(x)\] 
Now apply induction on the number of variables $r$.
\end{proof}
\noindent We recall the following theorem by Denef.
\begin{theorem}[Denef, \cite{denef-84}]\label{thm:denef-polyorder}
Let $S \subseteq \Q_p^{m+1}$ be a semi-algebraic set. Suppose that for every $x\in \Q_p^m$, the set $\{\ord t \mid (x,t)\in S\}$ consists of exactly one element which we will denote by $\theta(x)$. Then there exists a finite partition of $\Q_p^m$ into semi-algebraic sets $A$, such that on each $A$ there are $e \in \N;$ $f_i(x)\in \Q_p[x]$ such that for all $x\in A$, \[\theta(x)= \frac1e\ord\frac{f_1(x)}{f_2(x)} .\]
\end{theorem}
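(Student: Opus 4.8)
The plan is to reduce the statement to Denef's cell decomposition theorem for $p$-adic semi-algebraic sets and then use the uniqueness of $\theta(x)$ to force the cell data over each $x$ to be governed by a single valuation.

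\emph{Reduction to annular fibres.} First I would replace $S$ by $T:=\{(x,t)\in\Q_p^{m+1}\mid \exists\, t'\in\Q_p:\ (x,t')\in S \text{ and } \ord t=\ord t'\}$, which is semi-algebraic by Macintyre's quantifier elimination. Its fibre over any $x$ is exactly the annulus $\{t\mid\ord t=\theta(x)\}$, and $\{\ord t\mid (x,t)\in T\}=\{\theta(x)\}$ as well, so it suffices to prove the theorem for $T$.

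\emph{Cell decomposition and fibre analysis.} Next, apply Denef's cell decomposition to $T$ with $t$ as the distinguished variable, writing $T=\bigsqcup_j A_j$ where each $A_j$ is either a graph cell $\{(x,t)\mid x\in D_j,\ t=c_j(x)\}$ or a cell $\{(x,t)\mid x\in D_j,\ \ord a_1^{(j)}(x)\ \square_1\ \ord(t-c_j(x))\ \square_2\ \ord a_2^{(j)}(x),\ t-c_j(x)\in\lambda_j P_{n_j}\}$, with $D_j$ semi-algebraic and $a_i^{(j)},c_j$ semi-algebraic functions; then partition $\Q_p^m$ so that the set of $j$ with $(A_j)_x\ne\emptyset$ is constant on each piece. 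Fix such a piece and a point $x$ in it. For an active graph cell one has $\theta(x)=\ord c_j(x)$ directly. For an active non-degenerate cell, observe that $(A_j)_x\subseteq T_x=\{\ord t=\theta(x)\}$: if $\ord c_j(x)<\theta(x)$ then $\ord(t-c_j(x))=\ord c_j(x)$ would be constant on the whole of $\{\ord t=\theta(x)\}$ and also on $\{\ord t=\theta(x)+1\}$, so the cell conditions (which cannot see $\theta(x)$) would also be satisfied by points of order $\theta(x)+1$, contradicting $(A_j)_x\subseteq T_x$; hence $\ord c_j(x)\ge\theta(x)$. Comparing likewise with the adjacent annuli $\{\ord t=\theta(x)\pm1\}$ and partitioning $\Q_p^m$ further according to the finitely many configurations the cell conditions can take near $\ord t=\theta(x)$, one of $\ord c_j(x),\ \ord a_1^{(j)}(x),\ \ord a_2^{(j)}(x)$ must equal $\theta(x)+\kappa$ for a fixed integer $\kappa$. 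So, after a finite partition, $\theta(x)=\ord g(x)+\kappa$ with $g$ a fixed semi-algebraic function (a center or a bound of the cell, nonvanishing on the piece) and $\kappa\in\Z$ fixed.

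\emph{Valuation of a semi-algebraic function.} Finally I would invoke the theorem of Denef that the valuation of a semi-algebraic function $g\colon\Q_p^m\to\Q_p$ is, after a finite semi-algebraic partition of its domain, of the form $\tfrac1e\ord\tfrac{f_1(x)}{f_2(x)}$ with $e\in\N$ and $f_i\in\Q_p[x]$ (this is proved by induction on $m$ from quantifier elimination together with the Weierstrass/Hensel-type structure of one-variable semi-algebraic functions). Applying it to $g$ and absorbing the constant $\kappa$ into the rational function — replace $f_1$ by $p^{e\kappa}f_1$ when $\kappa\ge0$, or $f_2$ by $p^{-e\kappa}f_2$ when $\kappa<0$ — gives $\theta(x)=\tfrac1e\ord\tfrac{f_1(x)}{f_2(x)}$ on each piece, as required.

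I expect the two main obstacles to be: the bookkeeping in the fibre analysis, where one must check that the interplay between the value-group intervals $\ord a_1^{(j)}(x)\ \square_1\ \gamma\ \square_2\ \ord a_2^{(j)}(x)$ and the $P_{n_j}$-conditions really does pin $\theta(x)$ down to a single configuration after a \emph{finite} refinement of $\Q_p^m$; and the last invoked input on valuations of semi-algebraic functions, which is itself the arithmetic heart of the matter (this is where Hensel's lemma and degree bounds enter).
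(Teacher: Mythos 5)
First, a remark on the comparison itself: the paper does not prove this statement at all --- it is recalled as a theorem of Denef and the only ``proof'' in the source is the citation to \cite{denef-84} --- so there is no in-paper argument to measure yours against. Judged on its own, your reduction (annular saturation of $S$, cell decomposition, fibre analysis) is in the spirit of Denef's actual argument, but it has one fixable inaccuracy and one genuine gap. The inaccuracy is the claim that the cell conditions ``cannot see'' the annulus $\{\ord t=\theta(x)+1\}$ once $\ord c_j(x)<\theta(x)$: the coset condition $t-c_j(x)\in\lambda_j P_{n_j}$ does distinguish $\ord t=\theta(x)$ from $\ord t=\theta(x)+1$ whenever $\theta(x)-\ord c_j(x)$ is at most roughly $2\,\ordp n_j+1$, so that step only yields $\ord c_j(x)\geqslant\theta(x)-N$ for a constant $N$ depending on $n_j$. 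This is harmless --- in the remaining range you still get $\theta(x)=\ord c_j(x)+\kappa$ with $\kappa$ in a finite set, which is what your next sentence asserts --- but the reasoning as written is not correct.

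The genuine gap is the final step. The input you invoke --- that the valuation of a semi-algebraic function $g:\Q_p^m\to\Q_p$ is, after a finite semi-algebraic partition, of the form $\frac1e\ord\frac{f_1(x)}{f_2(x)}$ --- is precisely the special case of the theorem being proved in which $S$ is the graph of $g$ (then $\{\ord t\mid (x,t)\in S\}=\{\ord g(x)\}$ is automatically a singleton). So your argument reduces the statement to a special case of itself and then cites that special case. This is only legitimate if the special case has an independent proof, and you gesture at one (``Weierstrass/Hensel-type structure'') without giving it; by your own admission it is the arithmetic heart of the matter. A complete proof has to open that box: the centres and bounds produced by Denef's cell decomposition are, after refinement, roots of polynomials over $\Q_p[x]$ satisfying Hensel-type conditions on a distinguished coefficient, and it is the Newton-polygon/Hensel argument that converts the valuation of such a root into $\frac1e(\ord f_1(x)-\ord f_2(x))$. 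What you have is a correct reduction of the general statement to the case of cell centres and bounds, not a proof of the theorem.
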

\noindent Using this, we can show that
\begin{proposition}\label{prop:ordfunction1}
Let $f: K^r\to K$ be an $\Lm_{*}$-definable function. There exists a finite partitioning of $K^r$ in cells $C$, such that for every $C$, one of the following is true:
\begin{enumerate}
\item $f(x)$ is bounded on $C$ and there exist $a_i(x) \in K_K[x]$ and $e\in \Z$, such that 
\[\ord f(x) = \frac1e\ord\frac{a_1(x)}{a_2(x)},\]
with $\ord a_i(x) \geqslant 0$ for all $x\in C$,\\
\item   $f(x)$ is unbounded on $C$ and there exist $a_i, b \in K$ such that 
\[\ord f(x) = \ord \left(\sum_{i=1}^r a_ix_i +b\right).\] \end{enumerate}
\end{proposition}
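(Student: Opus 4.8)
The plan is to follow the structure of the cell decomposition results just established, combined with Denef's theorem on the order of elements in semi-algebraic fibers. First I would apply cell decomposition (Theorem~\ref{thm:celdec}) in $\Lstar$ to the set $S := \{(x,t) \in K^{r+1} \mid \ord t = \ord f(x)\}$, or more precisely, since $f$ is definable, I would directly decompose the domain $K^r$ into finitely many $\Lstar$-cells on which $f$ has a controlled form. On each cell, I would use Proposition~\ref{prop:lm*poly} (or rather Cell Decomposition-preparation I) applied to $f$ viewed as an $\Lstar$-polynomial in a suitable last variable, to reduce to the dichotomy: either $f$ is bounded on the cell, or $f$ is unbounded.

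In the unbounded case, the goal is to show $\ord f(x) = \ord(\sum a_i x_i + b)$ for constants $a_i, b \in K$. Here I would invoke Lemma~\ref{lemma:linearpart}: an unbounded $\Lstar$-polynomial with parameters from $K$ can, after partitioning, be written as a linear polynomial $\sum a_i x_i + b$ plus a bounded function $B(x)$. Since the linear part is unbounded on the cell (while $B$ is bounded), for $x$ ranging over the part of the cell where $\ord(\sum a_i x_i + b)$ is sufficiently negative we get $\ord f(x) = \ord(\sum a_i x_i + b)$; on the complementary region $\sum a_i x_i + b$ is bounded and we are back in the bounded case, so a further finite partition handles everything. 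In the bounded case, the point is that $\{(x, \ord f(x)) \mid x \in C\}$ is a semi-algebraic subset of $K^r \times \Gamma_K$ (since $\Lstar$-definable sets are semi-algebraic, being contained in the semi-algebraic sets) whose fiber over each $x$ is a single value $\theta(x) = \ord f(x)$; applying Denef's Theorem~\ref{thm:denef-polyorder} gives a semi-algebraic partition of $C$ with $\theta(x) = \frac{1}{e}\ord \frac{f_1(x)}{f_2(x)}$ for semi-algebraic polynomials $f_i \in K[x]$, and since $f$ is bounded on $C$ we may arrange (by clearing denominators and rescaling) that $\ord a_i(x) \geqslant 0$ there, writing $a_i(x) \in K_K[x]$. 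One must then check that the semi-algebraic partition produced by Denef's theorem can be refined to an $\Lstar$-cell partition; this follows because the pieces in question are themselves $\Lstar$-definable (the order of a definable function is controlled by $\Lstar$-definable data after cell decomposition), so cell decomposition in $\Lstar$ applies again.

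The main obstacle I expect is the interface between the semi-algebraic world (where Denef's theorem lives) and the $\Lstar$-definable world: Denef's theorem gives arbitrary semi-algebraic polynomials $f_i \in K[x]$, but I need the final partition to be into $\Lstar$-cells and the functions $a_i(x)$ to lie in $K_K[x]$ and be $QFD_*$-definable. The resolution is that on a bounded cell, a polynomial $f_i \in K[x]$ restricted to a bounded domain \emph{is} an $\Lstar$-polynomial with parameters from $K$ in the sense of our definition (bounded polynomials are built from $*$ and $+$ via Lemma stating multiplication is definable on bounded sets), and the condition ``$\ord f(x) = \frac{1}{e}\ord \frac{f_1(x)}{f_2(x)}$'' is an $\Lstar$-definable condition on $x$, so Theorem~\ref{thm:celdec} lets us refine the semi-algebraic partition to an $\Lstar$-cell partition without losing the stated form. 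A secondary technical point is ensuring $\ord a_i(x) \geqslant 0$: since $f$ is bounded, $\ord f(x)$ is bounded below on $C$, so after multiplying numerator and denominator by a suitable power of $\pi$ and partitioning according to the residue of $\ord a_i(x)$ modulo $e$, one arranges the nonnegativity while preserving the displayed identity.
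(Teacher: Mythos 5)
There are two genuine gaps. First, you apply Proposition~\ref{prop:lm*poly}, $CDI$, and Lemma~\ref{lemma:linearpart} to $f$ itself, ``viewed as an $\Lstar$-polynomial.'' But those results apply to elements of $K_{QFD_*^k}[t]$, i.e.\ to terms, and an arbitrary $\Lstar$-definable function is not given as a term: all you know is that its \emph{graph} is a boolean combination of conditions $\ord f_1(x,t)<\ord f_2(x,t)$ and $f_3(x,t)\in\lambda P_n$ with $f_i\in K_K[x,t]$. The paper's proof starts exactly there: it applies Lemma~\ref{lemma:linearpart}, Lemma~\ref{lemma:ordtopn} and Lemma~\ref{lemma:lm*poly} to the polynomials $f_i(x,t)$ defining the graph, and only then extracts the behaviour of $t=f(x)$ from the surviving conditions (a linear condition $q(t-c_i(x))=0$ forces $f=c_i$, so Lemma~\ref{lemma:linearpart} applies; otherwise $t$ satisfies a bounded higher-degree $P_n$-condition). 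The fact that $f$ is piecewise an $\Lstar$-polynomial is a \emph{consequence} of this argument (it is how Corollary~\ref{corol:fun} is later deduced), so assuming it at the outset is circular.

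Second, your use of Denef's Theorem~\ref{thm:denef-polyorder} as a black box does not survive the interface problem you yourself flag. The cell $C$ in case (1) is one on which $f(x)$ is bounded, not one on which $x$ is bounded (for $r>1$ these are quite different), so the polynomials $f_1,f_2\in K[x]$ that Denef's theorem returns are evaluated on a possibly unbounded domain and there is no reason for them, or for the quotient $\ord\frac{f_1(x)}{f_2(x)}$, to be realized by functions $a_i\in K_K[x]$ with $\ord a_i(x)\geqslant 0$. Your proposed fix (bounded domain $\Rightarrow$ polynomials are $\Lstar$-terms) therefore does not apply. The paper avoids this entirely: after the graph analysis it has reduced to the situation where $\ord t$ is bounded and all the defining conditions are of the bounded forms \eqref{eq3:orddeffun} and \eqref{eq4:orddeffun} with coefficients in $K_K[x]$, and it then \emph{re-runs the proof} of Denef's theorem internally in $\Lstar$, where every multiplication and division that occurs is between bounded, hence definable, quantities. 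That internal reduction is the step your proposal is missing.
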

\begin{proof}
The graph of $f(x)$ can be defined by a boolean combination of conditions 
\begin{align} &\ord f_1(x,t) <  \ord f_2(x,t), \quad \text{and} \label{eq1:orddeffun}\\ & f_3(x,t) \in \lambda P_n,\label{eq2:orddeffun}\end{align}
using functions $f_i(x,t) \in K_K[x,t]$. 

By Lemma \ref{lemma:linearpart}, we can partition $K^{r+1}$ in smaller sets $A$ such that each of the functions $f_i(x)$ is either bounded on $A$ or, if it is unbounded, its order is given by a linear polynomial and it  satisfies some condition $\ord f_i(x,t)<k$. If we make $k$ small enough, a condition of type \eqref{eq1:orddeffun} will be either trivial or false if  the $f_i(x,t)$ are not both (un)bounded on  $A$. If both $f_1(x,t)$ and $f_2(x,t)$ are bounded, apply lemma \ref{lemma:ordtopn} to replace the condition of type \eqref{eq1:orddeffun} by a condition of type \eqref{eq2:orddeffun}.
%

If  both functions are unbounded on $A$,
(\ref{eq1:orddeffun}) reduces to a relation between the valuations of linear polynomials. 
By a further cell decomposition we can either reduce such a relation to a condition independent of $t$, or we get a condition of the form
\[\ord (t-c(x))\ \square\  \ord b(x),\] with $b(x) \in K_K[x]$, $\square$ denotes either `$<$' or `$>$', and $c(x) = \sum a_i x_i +b$. This is independent of $t$ when $\ord c(x) <\ord t$. If $\ord c(x) \geqslant \ord t$, we conclude that either $\ord t= \ord c(x)$, which would prove our claim, or $\ord t \ \square\ \ord b(x)$.  
\\\\
We have now (up to partition) reduced the description of the graph of $f(x)$ to a number of conditions of the following forms:
definable conditions involving only $x$, not $t$, conditions of the form $\ord t \ \square \ \ord b_i(x)$ (here $\square $ may be assumed to denote either `$<$' or `$>$'), and conditions of the form $f_i(x,t) \in \lambda_i P_n$. 

Since $f$ is a function, for each fixed value of $x$, there must be a unique value $t = f(x)$ satisfying all of these conditions. However, conditions like $\ord t \ \square\ \ord \pi^kb_i(x)$ will at best fix $\ord t$. So for the cases we are considering here, we will always need at least one relation of type $f_i(x,t) \in \lambda_i P_n$ to define the set  $\text{Graph}(f)$.\\
First assume that every conditon of type $f_i(x,t) \in \lambda_i P_n$ is of the form $q(t-c_i(x)) \in \lambda_iP_n$,  with $c_i(x) \in K_K[x]$. For one of these conditions we should have that that $\lambda_i = 0$ (otherwise there would not exist a unique solution for $t$ for any given $x$). But then $f(x) = t = c_i(x)$, so that we can use Lemma \ref{lemma:linearpart}. 

If $t$ does not satisfy a linear conditon $q(t-c_i(x)) = 0$, at least one of the $f_i(x,t)$ has degree higher than one, and by Lemma \ref{lemma:lm*poly} we can assume that this $f_i(x,t)$ is bounded (possibly after a further partitioning), and $t$ satisfies a condition of the form 
\begin{equation} \sum_j a_j(x)(t-c_i(x))^j \in \lambda_i P_n\label{eq4:orddeffun}\end{equation}
with $a_j(x), t-c_i(x)$ bounded functions and $a_i(x), c_i(x) \in K_K[x]$.

Now if $t-c_i(x)$ is bounded, then if $\ord t$ is too small, we find that $\ord f(x) = \ord t = \ord c_i(x)$, and we can use Lemma \ref{lemma:linearpart}. Otherwise, if both $t$ and $t-c_i(x)$ are bounded, this implies that also $c_i(x)$ will be bounded, and  the condition $\sum_ja_j(x)(t-c_i(x))^j \in \lambda P_n$ can be rewritten to 
\begin{equation} \sum_ib_i(x)t^i \in \lambda P_n,\label{eq3:orddeffun}\end{equation}
with $b_i(x) \in K_K[x]$ bounded $\Lstar$-polynomials. Since we are now assuming that $\ord t$ is bounded,  linear conditions $q(t-c(x)) \in \lambda P_n$, will either be independent of $t$, or reduce to a condition of type (\ref{eq3:orddeffun}). 

We have now reduced to the case where $\ord t$ is bounded, and we have a  number of bounded conditions of type \ref{eq4:orddeffun}, together with additional conditions of the form (\ref{eq3:orddeffun}), and possibly some conditions $\ord t\ \square\ \pi^kb(x)$, so that we can now apply the same proof as for  Denef's Theorem \ref{thm:denef-polyorder}.  
(As always, we have to be a little careful about division, but it is easy to see that no problems occur.)
\end{proof}
We can now compare with $(K, \Laff)$, the structure of semi-affine sets.
It follows from the Proposition below that any $\Lstar$-definable function can be written as the sum of a bounded function and a function that is `essentially' semi-affine (note that the parts $X_i$ of the partition do not have to be semi-affine sets).
\begin{corol}\label{corol:fun}
Let $f: X \subseteq K^n \to K$ be an $\Lstar$-definable function. There exists a partition of $X$ into sets $X_i$, such that on each $X_i$, there is a linear polynomial $p_i(x) \in K[x]$ and a bounded $\Lstar$-definable function $b_i$   such that  $f _{|X_i}= p_i + b_i$.
\end{corol}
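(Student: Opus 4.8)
The plan is to combine Proposition \ref{prop:ordfunction1} (which controls $\ord f(x)$) with a parallel analysis of the residue information $\rho_{n,m}(f(x))$, in order to pin down $f$ itself rather than just its valuation. First I would apply Theorem \ref{thm:celdec} to partition $X$ into $\Lstar$-cells, and on each cell $C$ view the graph of $f$ as a quantifier-free definable subset of $C \times K$; by the cell decomposition of $\mathrm{Graph}(f)$, the last coordinate $t = f(x)$ satisfies, on each piece, a defining condition of one of the shapes analyzed in the proof of Proposition \ref{prop:ordfunction1}: either a linear equation $q(t - c_i(x)) = 0$ with $c_i(x) \in K_K[x]$, or a bounded polynomial membership condition $\sum_j a_j(x)(t - c_i(x))^j \in \lambda P_n$ together with order conditions $\ord t\ \square\ \ord b_i(x)$.

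In the first case we are done immediately: $f_{|C}(x) = c_i(x) \in K_K[x]$, and by Lemma \ref{lemma:linearpart} we may write $c_i(x) = p_i(x) + B_i(x)$ with $p_i$ a linear polynomial in $K[x]$ and $B_i$ bounded, so $f_{|C} = p_i + B_i$ is of the required form. In the remaining case, the argument of Proposition \ref{prop:ordfunction1} already produces a sub-partition on which either $\ord f(x) = \ord c_i(x)$ for some $c_i(x) \in K_K[x]$ (and then, after partitioning $C$ further so that $\rho_{n,m}(f(x))$ is constant on each piece — which is possible because, by the same reduction, the remaining defining conditions are either $\ord$-conditions on linear polynomials or bounded polynomial membership conditions, all quantifier-free definable in $x$ alone — we conclude $f(x) = c_i(x)\cdot u(x)$ for some $u$ with fixed order $0$ and fixed residue, hence $f(x) - c_i(x) $ is bounded, and we again invoke Lemma \ref{lemma:linearpart} on $c_i(x)$), or else $\ord f(x)$ is bounded on $C$, which by definition means $f$ itself is bounded on $C$, and we may simply take $p_i = 0$, $b_i = f_{|C}$.

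The only real subtlety is ensuring that in the ``unbounded'' branch the residue data, and not just the valuation, is controlled — i.e. that we can genuinely write $f(x) = p_i(x) + b_i(x)$ rather than merely $\ord f(x) = \ord(p_i(x))$. This is handled by observing that on the relevant cells the defining formula for $\mathrm{Graph}(f)$ reduces (after the partitioning carried out in Proposition \ref{prop:ordfunction1}) to conditions of type \eqref{eq3:orddeffun} and \eqref{eq4:orddeffun} in $t$ together with $\ord$-conditions; since $f$ is a \emph{function}, a fixed $x$ must admit a unique solution $t$, which forces one of the membership conditions to be of the degenerate form $q(t - c_i(x)) = 0$ whenever $\ord t$ is not already bounded. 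Thus the genuinely unbounded behaviour of $f$ is always linear up to a bounded correction, and the bounded behaviour is absorbed into $b_i$. Putting the finitely many cells together gives the partition $\{X_i\}$ with the stated decomposition $f_{|X_i} = p_i + b_i$.
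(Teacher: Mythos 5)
Your route is essentially the paper's: the printed proof just observes that the argument for Proposition \ref{prop:ordfunction1} already produces sets $U_1,\dots,U_r$ on which $f$ is given by a polynomial in $K_K[x]$ (arising from a degenerate linear condition $t=c_i(x)$ in the description of $\mathrm{Graph}(f)$), with $f$ bounded on the complement, and then invokes Lemma \ref{lemma:linearpart}. Your first and third sub-cases, together with your closing paragraph --- functionality forces the genuinely unbounded branch to be cut out by an exact equation $q(t-c_i(x))=0$ with $c_i\in K_K[x]$, and everything else is bounded and absorbed into $b_i$ --- reproduce exactly that argument, so the corollary does go through on your approach.

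One intermediate claim, however, is false as stated and should be removed rather than patched. In your middle sub-case you fix $\ord f(x)=\ord c_i(x)$ and the class $\rho_{n,m}$ of $u(x)=f(x)/c_i(x)$, and conclude that $f(x)-c_i(x)$ is bounded. This does not follow: fixing the order and the $\rho_{n,m}$-class of $u(x)$ only confines $u(x)$ to a coset $\lambda Q_{n,m}$, within which it may still vary; for instance $u(x)=1$ and $u(x)=1+\pi^m$ both have order $0$ and lie in $Q_{n,m}$, yet $(1+\pi^m)c_i(x)-c_i(x)=\pi^m c_i(x)$ is unbounded whenever $c_i$ is. More to the point, order-and-residue conditions alone never single out a unique $t$ for a given $x$, so this sub-case cannot by itself define a function; it necessarily collapses into the degenerate linear case. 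Your final paragraph already contains the correct resolution, so the erroneous step is redundant, but as written it is a step that would fail if relied upon.
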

\begin{proof}
It follows from the proof of Proposition \ref{prop:ordfunction1} that there exist sets $U_1, \ldots U_r \subseteq K^n$ such that on each of the $U_i$, the function $f(x)$ is given by a polynomial in $K_K[x]$, and $f(x)$ is bounded on $X \backslash \bigcup_iU_i$. Now use Lemma \ref{lemma:linearpart}. 
\end{proof}
For functions in one variable we obtain a stronger result:
\begin{lemma}\label{lemma:funaffine}
Let $x$ be one variable and $f(x): X\subseteq K \to K$ an $\Lstar$-definable functon. There exists a bounded semi-affine set $B \subseteq K$ such that $f(x)$ is semi-affine on $X \backslash B$.
\end{lemma}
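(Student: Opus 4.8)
The plan is to bootstrap from Corollary \ref{corol:fun} (applied with $n=1$) together with the one-variable case of Proposition \ref{prop:ordfunction1}, and to exploit the fact that in one variable the ``definable cells'' partitioning $X$ are very simple: a one-variable $\Lstar$-cell is just a set cut out by conditions $\ord a_1 \ \square_1\ \ord(t-c)\ \square_2\ \ord a_2$ together with $t-c \in \lambda P_n$, where now $c, a_1, a_2 \in K$ are \emph{constants} (there is no parameter $x$ left). Such a cell is automatically a semi-affine set, and in fact a \emph{bounded} semi-affine set precisely when the upper bound $\square_2\,\ord a_2$ is present. So the first step is: apply Corollary \ref{corol:fun} to get a finite partition $X = \bigsqcup_i X_i$ with $f_{|X_i} = p_i + b_i$, $p_i$ linear and $b_i$ bounded, and refine this partition (using Theorem \ref{thm:celdec}) so that each $X_i$ is a one-variable $\Lstar$-cell.

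The second step is to control which pieces are ``bad''. A piece $X_i$ on which $b_i \equiv 0$ contributes nothing: there $f = p_i$ is already semi-affine, so we may discard it. On a piece where $b_i$ is genuinely non-constant but bounded, we need this piece itself to be bounded, so that it can be absorbed into the exceptional set $B$. This is exactly where the one-variable structure helps: if $X_i$ is an unbounded $\Lstar$-cell (no upper bound on $\ord(t-c)$), then near infinity $X_i$ contains points of arbitrarily negative valuation; I would argue that on such a tail the function $f$ cannot be bounded \emph{unless} its linear part $p_i$ is constant and $f$ agrees with a bounded function there — but a bounded definable function in one variable on an unbounded cell is itself eventually determined by finitely much data (its value modulo $\pi^N$ stabilizes along the tail by cell decomposition of its graph), hence is eventually equal to a constant, hence semi-affine on the tail. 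More carefully: decompose the graph of $b_i$ over $X_i$ into cells; on the unbounded part each cell forces $b_i(t)$ into a fixed coset $\mu P_n$ with $\ord b_i(t)$ constant, and since $b_i$ is bounded and $\Lstar$-definable in one variable this pins $b_i$ down to a semi-affine (indeed locally constant, up to a linear term) description on the complement of a bounded set. Thus on the unbounded cells, after removing a bounded semi-affine set, $f$ is semi-affine; on the bounded cells, the cell itself goes into $B$.

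The third step is bookkeeping: set $B$ to be the union of the (finitely many) bounded cells $X_i$, together with the bounded semi-affine sets removed in step two from the unbounded cells. A finite union of bounded semi-affine sets is a bounded semi-affine set, so $B$ has the required form, and by construction $f$ is semi-affine on $X\setminus B$. The main obstacle I expect is step two — proving that a bounded $\Lstar$-definable function in one variable is \emph{eventually semi-affine} (in fact eventually locally constant up to its linear part) on an unbounded cell. The subtlety is that ``bounded'' only constrains the values, not the fibres, so one really has to run the cell decomposition of $\mathrm{Graph}(b_i)$ and use that each cell condition $b_i(t)-c \in \lambda P_n$ with $\ord(b_i(t)-c)$ between fixed bounds, combined with $b_i$ being a function of $t$, forces $b_i$ on each cell to be of the form $q t + c'$ (semi-affine) — and then to check that, the $t$-fibres of $X_i$ being unbounded, no genuinely unbounded behaviour of the linear part $qt$ can sneak in without contradicting boundedness of $b_i$, so $q=0$ on the tail and $b_i$ is constant there. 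Once this is in hand, the rest is routine.
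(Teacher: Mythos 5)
Your overall skeleton (partition via Corollary \ref{corol:fun}, absorb the bounded pieces into $B$, handle the unbounded pieces separately) matches the paper's, and you correctly identify step two --- showing that on an unbounded piece the bounded remainder $b_i$ is eventually semi-affine --- as the crux. But your proposed resolution of step two has a genuine gap. Decomposing $\mathrm{Graph}(b_i)$ into $\Lstar$-cells does not ``force $b_i$ into a fixed coset $\mu P_n$ with $\ord b_i$ constant'': since a cell condition $s-c(x)\in\lambda P_n$ with $\lambda\neq 0$ has infinite fibres over each $x$, every cell occurring in a graph must degenerate to $s=c(x)$, where $c$ is merely a function in $QFD_*^1$ --- i.e.\ exactly the kind of function whose eventual semi-affineness you are trying to prove. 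The argument is circular as stated, and nothing in it rules out, a priori, a bounded definable function on an unbounded domain that fails to be eventually semi-affine.

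The paper closes this gap by going back to the explicit normal form of Lemma \ref{lemma:lm*poly} rather than to a cell decomposition of the graph. First, Proposition \ref{prop:ordfunction1} in one variable gives that $x$ is bounded wherever $f(x)$ is bounded, so the locus where $f$ is not given by a polynomial of $K_K[x]$ is a bounded (hence semi-affine) subset of $K$. On each remaining piece, $f$ is a sum of terms $\prod s_{ij}[a_{ij}*(q_{ij}x+b_{ij})]\cdot\prod(v_{ij}x+c_{ij})$; boundedness of the linear factors $v_{ij}x+c_{ij}$ forces $x$ itself to be bounded unless each term is linear or contains no such factor, and once $\ord x<\min_{i,j}\{-\ord q_{ij},\ \ord (b_{ij}/q_{ij})\}$ each factor $a_{ij}*(q_{ij}x+b_{ij})$ collapses to the constant $a_{ij}*1$, because $\ord (q_{ij}x+b_{ij})\neq 0$ there. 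Hence $f$ is literally a linear polynomial on the tail $Y_D$, which is the statement you need. If you want to keep your formulation via $f=p_i+b_i$, you must still route the analysis of $b_i$ through Lemma \ref{lemma:lm*poly} and this degeneration of $*$ at nonzero valuation; cell decomposition of the graph alone will not do it.
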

\begin{proof}
By Proposition \ref{prop:ordfunction1}, $x$ is bounded whenever $f(x)$ is bounded. Therefore, the set $X \backslash \bigcup_iU_i$ we obtained in the proof of the previous lemma must be bounded, and this set is semi-affine since it is a definable subset of $K$. Moreover, on each of the (semi-affine) sets $U_i$, the function $f(x)$ is given by some polynomial in $K_K[x]$.

Applying Lemma \ref{lemma:lm*poly}, we find a partitioning of $U_i$ in sets $D$, such that on each $D$, we have that $f(x) = \sum_{j=1}^r f_j(x)$, where
\[ f_j(x) = \prod_{i=1}^{r_j} s_{ij}\left[a_{ij}*(q_{ij}x + b_{ij})\right]\cdot \prod_{i=1}^{r_j'}(v_{ij}x + c_{ij}),\]
for given constants $s_{ij}, a_{ij}, q_{ij}, b_{ij}, v_{ij}, c_{ij}$. 
We also know that the functions $v_{ij}x + c_{ij}$ are bounded on $D$, which implies that $x$ has to be bounded on $D$ (unless $(r_j,r_{j}' )= (0,1)$ or $r_{j}' =0$).  So if $D$ is not a bounded set, then for each $j$, either  $r_j' =0$ or $f_j(x)$ is a linear polynomial. 
If $D$ is unbounded, put  \[Y_D := \left\{ x \in D \mid \ord x < \min_{i,j}\{- \ord q_{ij}, \ord \frac{b_{ij}}{q_{ij}}\}\right\}.\] Then $X \backslash \cup Y_D$ is a bounded set, and on each $Y_D$, $f(x)$ is  a linear polynomial.  
\end{proof}

\begin{corol}\label{corol:multdef}
If  $f: X\subseteq K^2 \mapsto K: (x,y) \mapsto xy$ is $\Lm_{*}$-definable, then $X$ is a bounded set.
\end{corol}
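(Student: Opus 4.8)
The goal is to show that if $f:X\subseteq K^2\to K:(x,y)\mapsto xy$ is $\Lstar$-definable, then $X$ must be bounded. The natural strategy is a proof by contradiction combined with a restriction argument that reduces the two-variable statement to the one-variable results already established, notably Proposition~\ref{prop:ordfunction1} and Lemma~\ref{lemma:funaffine}.

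Here is the plan. Suppose $X$ is unbounded. Then for every $k\in\Z$ there is a point $(a,b)\in X$ with $\min\{\ord a,\ord b\}<k$; without loss of generality (swapping the roles of the two coordinates, which is harmless since $xy=yx$) we may assume that $\ord a$ takes arbitrarily small values. First I would fix a suitable slice: for a cleverly chosen fixed value $y=y_0\in K^\times$, consider the one-variable function $g(x):=f(x,y_0)=y_0\,x$ on the section $X_{y_0}:=\{x\in K\mid (x,y_0)\in X\}$. This $g$ is $\Lstar$-definable, and it is a \emph{linear} function of $x$ with nonzero slope $y_0$, hence unbounded precisely on the unbounded part of $X_{y_0}$. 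The subtlety is that a single horizontal slice $X_{y_0}$ need not be unbounded even though $X$ is; so I would instead argue more globally. Apply Corollary~\ref{corol:fun} (or directly Proposition~\ref{prop:ordfunction1}) to $f$ as a function on $X\subseteq K^2$: there is a partition of $X$ into $\Lstar$-definable pieces $X_i$ with $f_{|X_i}=p_i(x,y)+b_i(x,y)$, where $p_i$ is a linear polynomial in $K[x,y]$ and $b_i$ is bounded. Since $X$ is unbounded, at least one piece $X_i$ is unbounded, and on it $xy-p_i(x,y)=b_i(x,y)$ is bounded.

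The heart of the argument is then to derive a contradiction from the identity $xy = p_i(x,y) + b_i(x,y)$ holding on an unbounded definable set, with $p_i(x,y)=\alpha x+\beta y+\gamma$ linear and $b_i$ bounded. Rearranging, $(x-\beta)(y-\alpha) = \alpha\beta+\gamma + b_i(x,y)$, so the product $(x-\beta)(y-\alpha)$ is bounded on $X_i$. If $X_i$ is unbounded, there are points of $X_i$ where $\ord(x-\beta)$ is arbitrarily small (again possibly after swapping coordinates); but then, since $(x-\beta)(y-\alpha)$ is bounded below in valuation by some fixed $l$, we get $\ord(y-\alpha) = \ord\big[(x-\beta)(y-\alpha)\big] - \ord(x-\beta) > l - \ord(x-\beta)$, which forces $\ord(y-\alpha)$ to be arbitrarily large, i.e. $y$ is forced arbitrarily close to $\alpha$. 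I would now fix such a slice: restrict to $y=\alpha + \pi^N$ for $N$ large (chosen so the slice of $X_i$ is nonempty with $x$ unbounded on it). On this slice $(x-\beta)\pi^N$ is bounded, which is fine, but the point is that we can instead localize differently: intersect $X_i$ with a thin definable ``vertical strip'' $\{(x,y)\mid \ord(y-\alpha)=e\}$ for a well-chosen fixed $e\in\Z$ on which $x$ remains unbounded; such a strip exists by the valuation-arbitrarily-large conclusion together with the pigeonhole/compactness of the partition into finitely many pieces and finitely many relevant congruence classes. On that strip, $(x-\beta)(y-\alpha)$ bounded with $\ord(y-\alpha)$ fixed forces $x-\beta$ bounded, contradicting unboundedness of $x$. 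Concretely, this reduces to: a single-variable definable set on which $x\mapsto cx$ (for fixed $c=y-\alpha$, $\ord c$ fixed) is bounded must itself be bounded, which is immediate.

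The main obstacle I expect is the bookkeeping in passing from ``$X$ unbounded'' to ``there is a \emph{definable slice or strip} on which a single variable is unbounded while the relevant linear-plus-bounded decomposition still applies.'' One must be careful that after the finite partition from Corollary~\ref{corol:fun} and after possibly swapping the two coordinates, the chosen strip is genuinely definable (it is, since conditions of the form $\ord(y-\alpha)=e$ are $\Lstar$-definable and $\alpha\in K$ is a constant) and genuinely has unbounded $x$-projection (this uses that boundedness is not preserved when only finitely many pieces and finitely many valuation levels are in play, so some piece-and-level combination must be unbounded). Once that reduction is in place, the contradiction $-$ a nonzero scalar multiple of an unbounded set being bounded $-$ is trivial. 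I would therefore organize the write-up as: (1) assume $X$ unbounded, apply Corollary~\ref{corol:fun}; (2) pick an unbounded piece $X_i$ and rewrite $xy$ as linear-plus-bounded, hence $(x-\beta)(y-\alpha)$ bounded on $X_i$; (3) show one of the two variables, say $x-\beta$, is unbounded on $X_i$ while the other is forced into a fixed valuation level on a definable substrip; (4) conclude $x-\beta$ is bounded on that substrip, a contradiction.
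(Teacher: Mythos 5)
There is a genuine gap at step (3), and it is fatal to the whole strategy. After steps (1)--(2) you know only that $X_i$ is unbounded and that $(x-\beta)(y-\alpha)$ is bounded on $X_i$, i.e.\ $\ord(x-\beta)+\ord(y-\alpha)>l$ for some fixed $l\in\Z$. These two facts are mutually consistent --- they hold, for instance, on the unbounded ``neighbourhood of a hyperbola'' $\{(x,y)\mid \ord x+\ord y\geqslant 0\}$, on which $xy$ is literally a bounded function --- so no contradiction can be extracted from them alone. Concretely, the substrip you need in step (3) does not exist: on any strip $\{\ord(y-\alpha)=e\}$ the displayed inequality forces $\ord(x-\beta)>l-e$, so $x-\beta$ is automatically bounded there. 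The points of $X_i$ with $\ord(x-\beta)$ very negative are spread over infinitely many levels $e\to+\infty$, and a pigeonhole over the finitely many pieces of the partition cannot collapse them into a single level; the same objection kills the single slice $y=\alpha+\pi^N$. What is missing is an input that actually uses definability to rule out this hyperbola-shaped escape.

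The paper gets that input by restricting to the diagonal rather than to horizontal strips: using commutativity it argues that the maximal domain of definable multiplication has the form $Y\times Y$, so that $g(x)=x\cdot x=x^2$ is an $\Lstar$-definable one-variable function on $Y$. Lemma \ref{lemma:funaffine} then makes $g$ semi-affine, i.e.\ piecewise equal to $a_Sx+b_S$, outside a bounded set, and since $x^2=a_Sx+b_S$ has at most two solutions each such piece is finite; hence $Y$, and therefore $X$, is bounded. If you want to salvage your approach you must produce a definable one-dimensional restriction of $f$ on which the product is genuinely nonlinear and unbounded in the argument --- which is exactly what the diagonal provides and what horizontal or vertical slices (where $f$ is linear in the remaining variable) cannot.
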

\begin{proof}
By the  commutativity of the multiplication map, any maximal set $X\subseteq K^2$ on which multiplication is definable, must be of the
 form $X = Y \times Y$, for some $Y \subseteq K$. Moreover, if multiplication is definable on $X$,  this induces an $\Lstar$-definable function $g:Y\to K : x\mapsto x^2$. By Lemma \ref{lemma:funaffine}, there is a bounded semi-affine set $B$ such that $g$ is semi-affine on $Y \backslash B$. Hence, 
%
after partitioning $Y \backslash B$ into a finite number of smaller sets $S$, there exist constants $a_S\in \Q, \ b_S\in K$ such that on each $S$,
\[ g_{|S}(x) = a_Sx+b_S.\]
Since all elements of $S$ should satisfy the equation $x^2 = a_Sx + b_S$, each $S$ is a finite set. So $Y$ must be a bounded set, and therefore the domain of $f$ will also be bounded. 
\end{proof}

\subsection{On the existence of definable Skolem functions.}
In \cite{lee-2011b}, we show the following link between cell decompositions that use strong cells and the existence of definable Skolem functions. Here $\Ldist$ is the language $\LmM \cup \{ \ord(x-y)<\ord(z-t)\}$, and $\overline{\Q}^K$ is the algebraic closure of $\Q$ in $K$.
\begin{theorem}
Let $K$ be a $p$-adically closed field. Suppose that $\Lm \supseteq \Ldist$ and that multiplication by constants from $\overline{\Q}^{K}$ is definable in $(K, \Lm)$. The structure $(K,\Lm)$ admits strong cell decomposition
if and only if $(K, \Lm)$ has definable Skolem functions.
\end{theorem}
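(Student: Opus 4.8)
The plan is to treat the two implications separately, using throughout the standard reduction: $(K,\Lm)$ has definable Skolem functions if and only if for every $\Lm$-definable $X\subseteq K^{k+1}$ with projection $Y=\{x\mid\exists t\,(x,t)\in X\}$ there is an $\Lm$-definable $s\colon Y\to K$ with $(x,s(x))\in X$ for all $x\in Y$ (the general case follows by induction on the number of quantified variables). I will establish this ``fibre-selection'' form from strong cell decomposition, and conversely produce strong cells from it.

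For the forward implication I would start by decomposing a given $X\subseteq K^{k+1}$ into finitely many strong cells
\[C_i=\{(x,t)\in D_i\times K\mid \ord a_1^i(x)\ \square_1^i\ \ord(t-c^i(x))\ \square_2^i\ \ord a_2^i(x),\ t-c^i(x)\in\lambda_i Q_{n_i,m_i}\},\]
with $a_1^i,a_2^i,c^i$ all $\Lm$-definable functions $K^k\to K$. Since each projection $\pi(C_i)$ is $\Lm$-definable (its defining condition -- nonemptiness of the relevant valuation interval -- is expressible in $\Ldist$), I can partition $Y$ by Boolean operations into pieces on each of which one fixed cell $C_i$ witnesses membership. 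On such a piece I must select, for each $x$, a value $t$ in the fibre of $C_i$; the admissible valuations $\gamma=\ord(t-c^i(x))$ are exactly those in the prescribed (possibly one- or two-sidedly bounded) interval between $\ord a_1^i(x)$ and $\ord a_2^i(x)$ that are $\equiv\ord\lambda_i$ modulo $n_i$. As $\Gamma_K$ is a $\mathbb Z$-group and $\Ldist$ expresses valuation comparison and valuation congruences, I can single out one such $\gamma$ definably: the least one above $\ord a_1^i(x)$ if $\square_1^i$ is ``$<$'' (then $\gamma=\ord a_1^i(x)+j$ with $j\in\{1,\dots,n_i\}$ determined by $\ord a_1^i(x)\bmod n_i$), the analogous largest one below $\ord a_2^i(x)$ if $\square_1^i$ is trivial and $\square_2^i$ is ``$<$'', and $\gamma=\ord\lambda_i$ if both are trivial. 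It then remains to realise $\gamma$: to produce an $\Lm$-definable $d(x)$ with $\ord d(x)=\gamma$ and $d(x)\in\lambda_i Q_{n_i,m_i}$, for then $s(x):=c^i(x)+d(x)$ is the desired selector. One builds $d(x)$ from $a_1^i(x)$ or $a_2^i(x)$ (or, in the doubly-trivial case, from a fixed constant in $\lambda_i Q_{n_i,m_i}$), scaled by a power of a uniformizer taken in $\overline{\Q}^K$ and, after partitioning $Y$ further according to the finitely many values of the angular component of $a_1^i(x)$ modulo $\pi^{m_i}$, by a constant correction in $\overline{\Q}^K$ chosen so that the angular component of $d(x)$ agrees with that of $\lambda_i$ -- this is precisely the point at which the hypothesis that multiplication by $\overline{\Q}^K$-constants is definable is used.

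For the converse I would start from a cell decomposition of $(K,\Lm)$ by cells of the same shape but with the bounds $a_1,a_2$ merely $\Gamma_K$-valued definable functions (available in the relevant setting by \cite{clu-lee-2011,lee-2011b}). For each such cell and each index $i$, the $\Lm$-definable set $\{(x,t)\in K^k\times K\mid \ord t=\ord a_i(x)\}$ surjects onto $K^k$ (the valuation is onto $\Gamma_K$), so a definable Skolem function yields an $\Lm$-definable $\tilde a_i\colon K^k\to K$ with $\ord\tilde a_i(x)=\ord a_i(x)$ identically. Replacing $a_i$ by $\tilde a_i$ leaves the cell unchanged as a set but now exhibits it as a strong cell (the center $c(x)$ was already a definable function), so the given decomposition is upgraded to a strong cell decomposition.

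The hard part will be the forward implication, and specifically its last step: realising the chosen valuation $\gamma$ by an actual definable element of $K$ while simultaneously forcing the angular component into the coset $\lambda_i Q_{n_i,m_i}$, using only the scalar multiplications provided by $\overline{\Q}^K$ (Teichmüller representatives and a uniformizer that may be taken algebraic over $\Q$). Without that hypothesis one can still pin down $\gamma$ in $\Gamma_K$ but can no longer guarantee a corresponding definable element of $K$, and indeed the examples of \cite{phd,lee-2011b,lee-2012.1} show that strong cell decomposition may then fail; I would refer to \cite{lee-2011b} for the full verification of the routine bookkeeping in both directions.
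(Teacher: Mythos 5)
The paper itself contains no proof of this statement: it is quoted verbatim from \cite{lee-2011b}, so there is no in-text argument to measure yours against. Your outline is nonetheless the expected one --- it is essentially Mourgues' argument for the $P$-minimal case \cite{mou-09}, adapted exactly as the paper's introduction suggests. Your forward direction is sound: after reducing to one existential quantifier and to a single strong cell, picking the extremal admissible valuation $\gamma$ in the interval (congruent to $\ord\lambda_i$ modulo $n_i$) and realising it as $c^i(x)+u\pi^{j}a^i(x)$, with $u\pi^{j}\in\overline{\Q}^{K}$ chosen after partitioning by the finitely many values of $\ord a^i(x)\bmod n_i$ and of the angular component of $a^i(x)$ modulo $\pi^{m_i}$, is precisely where the scalar-multiplication hypothesis is consumed (using that $\overline{\Q}^{K}$ contains a uniformizer and hence representatives of every residue mod $\pi^{m_i}$). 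The one point you should not understate is in the converse: from definable Skolem functions alone one gets nothing; your argument needs as input that $(K,\Lm)$ already admits a cell decomposition with $\Gamma_K$-valued bounds and definable centers. That input is a theorem about $\LmM$-minimal structures from \cite{clu-lee-2011,lee-2011b} and is part of the standing hypotheses under which the quoted statement is actually proved there, rather than a consequence of $\Lm\supseteq\Ldist$ alone --- as stated, the theorem is silently relying on that context. Granting it, your upgrade step (apply a Skolem function to $\{(x,t)\mid \ord t=\ord a_i(x)\}$ to replace each $\Gamma_K$-valued bound by a $K$-valued definable function of the same order) is correct and is the heart of the equivalence; the rest is bookkeeping that both you and the paper defer to \cite{lee-2011b}.
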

It is then immediate that $(K, \Lstar)$ has definable Skolem functions for all $p$-adically closed fields $K$. To achieve cell decomposition for structures $(K, \Lstar^F)$ with $F \subset K$, we need to modify the definition of cells to
\[\{(x,t) \in D \times K \mid \ord \pi^{k_1}a_1(x) \ \square_1\  \ord (t-c(x))\  \square_2 \ \ord \pi^{k_2}a_2(x), \ t-c(x) \in \lambda P_{n}\},\] with $k_i \in \Z$ and $a_i(x)$ a quantifierfree $\Lstar^F$-definable function. This decomposition is strong if there exist $\Lstar^F$-definable functions with the same order as $\pi^{k_i}a_i(x)$. Hence such a structure will have definable Skolem functions if
multiplication by constants from $\overline{\Q}^K$ is definable in $(K, \Lstar^F)$, since $\overline{\Q}^K$ contains an element $\pi$ of minimal positive order (we refer to \cite{lee-2011b} for a proof of this fact). 

%

\section{Semi-bounded sets: comparison with the real situation}\label{sec:compare}

For real closed fields, it has been established  that there is a unique substructure of  the semi-algebraic sets that properly extends the semi-linear sets: the structure of  semi-bounded sets.
In the next theorem, we show that $(K, \Lstar)$ is the natural $p$-adic equivalent to this structure.

\begin{theorem}\label{prop:padicsemibounded} Let $X \subseteq K^n$ be a semi-algebraic set. The following conditions are equivalent.
\begin{enumerate}
\item $X$ is a semi-bounded set: there exist bounded semi-algebraic sets $X_1, \ldots, X_r$ such that $X$ is definable in $\Laff \cup\{X_1, \ldots, X_r\}$.
\item $X$ is $\Lstar$-definable
\end{enumerate}
\end{theorem}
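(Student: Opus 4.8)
The plan is to prove the two implications separately, with the direction $(2) \Rightarrow (1)$ being essentially a bookkeeping argument and $(1) \Rightarrow (2)$ requiring the bulk of the work. For $(2) \Rightarrow (1)$, suppose $X$ is $\Lstar$-definable. I would invoke cell decomposition (Theorem \ref{thm:celdec}) to write $X$ as a finite union of $\Lstar$-cells. Each cell is controlled by a center $c(x) \in QFD_*^k$ and the functions $a_i(x)$; by Corollary \ref{corol:fun}, every $\Lstar$-definable function is, on a finite partition, a linear polynomial plus a bounded $\Lstar$-definable function. Tracing through the inductive definition of cells, the ``bounded'' contributions only ever get evaluated through the restricted multiplication $*$, which by the Lemma on $QFD_*^2$ (defining $x\cdot y$ on bounded sets via $(1+x)*(1+y)-x-y-1$) agrees with genuine semi-algebraic multiplication on bounded domains. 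Hence each cell is definable in $\Laff$ together with finitely many bounded semi-algebraic sets coming from those restricted-multiplication pieces, which gives semi-boundedness of $X$.

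For $(1) \Rightarrow (2)$, suppose $X$ is definable in $\Laff \cup \{X_1, \ldots, X_r\}$ with each $X_i \subseteq K^{m_i}$ bounded semi-algebraic. It suffices to show each bounded semi-algebraic set $X_i$ is $\Lstar$-definable, since $\Laff \subseteq \Lstar$ and definability is preserved under the usual logical operations. Fix a bounded semi-algebraic $Y \subseteq K^m$. I would apply Denef's semi-algebraic cell decomposition to $Y$, so that $Y$ is a finite union of semi-algebraic cells. The defining conditions of a semi-algebraic cell involve semi-algebraic functions of the parameters and finitely many applications of ordinary multiplication; since $Y$ is bounded, all the relevant evaluations happen on a bounded region, and by the rescaling trick in the Lemma on $QFD_*^2$ (choosing $q_\gamma$ with $\ord q_\gamma > \gamma$ and writing $xy = q_\gamma^{-2}(q_\gamma x \cdot q_\gamma y)$) ordinary multiplication restricted to that bounded region is expressible using $*$ and scalar multiplication. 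One also needs that the semi-algebraic functions appearing as centers and bounds of Denef's cells can be taken to be $QFD_*^m$-functions on the bounded set in question; here the functions $\div_\gamma$ of Lemma \ref{lemma:exqfd} and the root-extraction function of Lemma \ref{lemma:2.4bis} supply the needed building blocks, and the sets $P_n$, $Q_{n,m}$ are already in $\Lstar$.

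The main obstacle, and the point requiring the most care, is the induction on complexity of semi-algebraic functions restricted to bounded sets: one must verify that \emph{every} semi-algebraic function, when restricted to a bounded semi-algebraic domain, becomes (after a finite partition into $\Lstar$-definable pieces) an $\Lstar$-definable function built only from addition, scalar multiplication, $*$, and the $\div_\gamma$ and root functions. This is really the statement that the bounded part of the semi-algebraic world is captured by $\Lstar$, and it should follow by combining Denef's description of semi-algebraic functions (piecewise given by algebraic conditions, with $\ord$-behaviour governed by Theorem \ref{thm:denef-polyorder}) with the observation that on bounded sets all the arithmetic stays bounded, so no ``unrestricted'' multiplication is ever needed. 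A secondary technical nuisance is ensuring uniformity of the bound defining ``bounded'': when passing to subsets and images under polynomial maps of bounded sets one stays bounded, but one must track the constant, which is routine. Once this is in place, $Y$ is exhibited as an $\Lstar$-definable set, completing the equivalence.
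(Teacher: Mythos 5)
Your two directions rest on exactly the right pair of observations --- that genuine multiplication is $\Lstar$-definable on any set $B_\gamma=\{x : \min_i \ord x_i \geqslant \gamma\}$ via the rescaling trick, and that the restricted product $*$ is itself recoverable from a bounded semi-algebraic set --- and these are precisely the two facts the paper's proof uses. But the paper's proof is a short, direct syntactic translation in both directions, and your detour through cell decomposition buys you nothing while creating the very difficulties you then have to wave past. For $(2)\Rightarrow(1)$ the paper simply notes that the graph of $*$ is definable in $\Laff\cup\{X_1\}$ with $X_1=\{(x,y,z)\in K^3 : \ord x=\ord y=0 \wedge z=xy\}$ bounded semi-algebraic; since the remaining symbols of $\Lstar$ (the divisibility relation and the $P_n^*$) are already $\Laff$-definable, every $\Lstar$-formula is literally an $\Laff\cup\{X_1\}$-formula, and no appeal to Theorem \ref{thm:celdec} or Corollary \ref{corol:fun} is needed. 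For $(1)\Rightarrow(2)$ your reduction to showing that each bounded semi-algebraic set $Y$ is $\Lstar$-definable is correct, but the paper then finishes by choosing $\gamma$ with $Y\subseteq B_\gamma$ and translating a quantifier-free (Macintyre-style) defining formula of $Y$ term by term: every polynomial in that formula is evaluated only on $B_\gamma$, where all intermediate arithmetic stays bounded and is therefore given by an $\Lstar$-term, so the whole formula transfers at once.

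The one point I would press you on is the step you yourself flag as ``the main obstacle'': that every semi-algebraic function, restricted to a bounded domain, becomes $\Lstar$-definable. By routing the argument through Denef's cell decomposition of $Y$ you are forced to handle the centers and bounding functions of those cells, which are genuinely harder objects than polynomials --- they involve quotients and root extractions, and it is not automatic that they land in $QFD_*^m$ even on a bounded set; your appeal to Lemma \ref{lemma:exqfd}, Lemma \ref{lemma:2.4bis} and Theorem \ref{thm:denef-polyorder} is a sketch rather than a proof. Working from a quantifier-free defining formula instead of a cell decomposition makes this entire issue evaporate, because only polynomial terms ever need to be interpreted. So: your proposal is essentially sound and would very likely go through, but it proves a strictly stronger and more delicate statement than the theorem requires, and the part you leave unfinished is exactly the part the paper's shorter argument never has to touch.
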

\begin{proof}
 Any bounded semi-algebraic set $B$ is $\Lstar$-definable: write $B_{\gamma}$ for the set $\{x \in K^n \mid \min \ord x_i \geqslant \gamma\}$, and choose  $\gamma$   such that $B \subseteq B_{\gamma}$. Addition and multiplication are $\Lstar$-definable on $B_{\gamma}$, and hence there exists an $\Lstar$-formula $\phi(x)$, such that $\{x \in B_{\gamma} \mid \phi(x)\} = B$. It follows immediately that every semi-bounded set is also $\Lstar$-definable. 
On the other hand, every $\Lstar$-definable set is semi-bounded since the graph of $*$ is definable in $\Laff \cup X_1$, where $X_1$ is the bounded semi-algebraic set \[X_1 = \{(x,y,z) \in K^3 \mid \ord x = \ord y=0 \wedge z = xy\}.\]
\end{proof}
Pillay, Scowcroft and Steinhorn \cite{pil-sc-st-89} showed that if $X \subseteq \R^n$ is a bounded set, then multiplication is not definable in $(\R, + , <, \{\overline{c}\}_{c\in \R}, X)$. 
We have a similar result:
\begin{theorem}
 Multiplication is not  definable in $\Laff \cup\{X\}$ if $X\subset \Q_p^n$ is a bounded set
\end{theorem}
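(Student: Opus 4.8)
The plan is to reduce the statement to the one-variable case and then to the already-established Corollary~\ref{corol:multdef}. First I would observe that the language $\Laff \cup \{X\}$, for a bounded semi-algebraic $X \subseteq \Q_p^n$, defines only sets that are $\Lstar$-definable: by Theorem~\ref{prop:padicsemibounded}, every bounded semi-algebraic set is $\Lstar$-definable, so $\{X\}$ adds nothing beyond $\Lstar$, and $\Laff \subseteq \Lstar$ as well. Hence any set or function definable in $\Laff \cup \{X\}$ is $\Lstar$-definable. Therefore, if multiplication $(x,y)\mapsto xy$ were definable on some set $Y \times Y$ in $\Laff \cup \{X\}$, it would in particular be $\Lstar$-definable on that set.

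Next I would invoke Corollary~\ref{corol:multdef}: if $f: X' \subseteq K^2 \to K: (x,y)\mapsto xy$ is $\Lstar$-definable, then $X'$ is a bounded set. Combined with the previous paragraph, this shows that in $\Laff \cup \{X\}$ multiplication can only be defined on bounded subsets of $\Q_p^2$. But the claim of the theorem is that multiplication is \emph{not} definable in $\Laff \cup \{X\}$ — meaning, presumably, that it is not definable as a total function on all of $\Q_p^2$ (or on any unbounded set). So the argument is: multiplication on all of $\Q_p^2$ is not a function with bounded domain, hence by Corollary~\ref{corol:multdef} it is not $\Lstar$-definable, hence (since $\Laff\cup\{X\}$-definable $\Rightarrow$ $\Lstar$-definable) it is not definable in $\Laff \cup \{X\}$ either.

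The only real subtlety, and the step I would be most careful about, is making precise what ``multiplication is not definable'' means here and checking that Corollary~\ref{corol:multdef} is strong enough. Corollary~\ref{corol:multdef} says any \emph{maximal} domain on which multiplication is definable must be bounded; since $\Q_p^2$ itself is unbounded, multiplication on $\Q_p^2$ is not definable, and more generally it is not definable on any unbounded $X' = Y\times Y$. One should also note the commutativity reduction already used in the proof of Corollary~\ref{corol:multdef}: a maximal definable domain for multiplication has the form $Y\times Y$, and definability of $(x,y)\mapsto xy$ on $Y\times Y$ forces the squaring function $x \mapsto x^2$ to be definable on $Y$, which by Lemma~\ref{lemma:funaffine} is semi-affine off a bounded set, forcing $Y$ to be finite modulo a bounded set and hence bounded. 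So the whole weight of the theorem rests on the inclusion $\Laff\cup\{X\}$-definable $\subseteq$ $\Lstar$-definable, which is immediate from Theorem~\ref{prop:padicsemibounded}, plus Corollary~\ref{corol:multdef}; no new machinery is needed and the proof is short.

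I would write it as follows: Let $X \subseteq \Q_p^n$ be bounded semi-algebraic. By Theorem~\ref{prop:padicsemibounded}, $X$ is $\Lstar$-definable, so every $\Laff \cup\{X\}$-definable set is $\Lstar$-definable. If multiplication were definable in $\Laff\cup\{X\}$ on an unbounded set, it would be $\Lstar$-definable on that set, contradicting Corollary~\ref{corol:multdef}. Hence multiplication is not definable in $\Laff \cup \{X\}$.
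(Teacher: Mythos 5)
Your proposal is correct and follows essentially the same route as the paper: note that $\Laff\cup\{X\}$ is a substructure of $(\Q_p,\Lstar)$ (via Theorem~\ref{prop:padicsemibounded}) and then apply Corollary~\ref{corol:multdef}. You merely spell out the details the paper leaves implicit, including the tacit assumption that the bounded set $X$ is semi-algebraic so that it is $\Lstar$-definable.
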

\begin{proof}
Such a structure would be a substructure of the semi-bounded sets $(\Q_p, \Lstar)$, so our claim follows by Corollary \ref{corol:multdef}.
\end{proof}
%
If $X$ is semi-bounded, this also imposes the following restriction on the definable sets of $\Laff \cup \{X\}$:

\begin{lemma} \label{lemma:2dimsets} If $X \subseteq K^n$ be a semi-bounded set,
 then every subset of $K^2$ that is definable in $\Laff \cup \{X\}$ is semi-affine outside some bounded set
\end{lemma}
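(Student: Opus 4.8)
The plan is to reduce the statement about definable subsets of $K^2$ to the previously established characterization of one-variable $\Lstar$-definable functions (Lemma~\ref{lemma:funaffine}), using the fact that any structure $\Laff \cup \{X\}$ with $X$ semi-bounded is a substructure of $(K, \Lstar)$ (Theorem~\ref{prop:padicsemibounded}). So throughout we may work inside $(K, \Lstar)$, and it suffices to prove that every $\Lstar$-definable subset $S \subseteq K^2$ is semi-affine outside a bounded set.

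First I would apply cell decomposition (Theorem~\ref{thm:celdec}) to write $S$ as a finite union of $\Lstar$-cells
\[\{(x,t) \in D \times K \mid \ord a_1(x) \ \square_1 \ \ord(t - c(x)) \ \square_2 \ \ord a_2(x), \ t - c(x) \in \lambda P_n\},\]
with $D \subseteq K$ a cell, $a_i, c \in QFD_*^1$, and $\lambda \in K$. Since a finite union of semi-affine-outside-bounded sets is again semi-affine outside a (larger) bounded set, it is enough to treat a single cell $C$. The base $D$ is a definable subset of $K$, hence semi-affine (a definable subset of $K$ in any of these languages is a finite boolean combination of $\ord$-intervals and $P_n$-cosets), so it is semi-affine outside a bounded set. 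The content of the statement is therefore: the fibre structure over $D$ given by the center $c(x)$ and the bounds $\ord a_i(x)$ is semi-affine once we discard a bounded part.

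The key step is to invoke Corollary~\ref{corol:fun} (or directly Lemma~\ref{lemma:funaffine}) applied to the one-variable functions $c(x)$, $a_1(x)$, $a_2(x)$: each is an $\Lstar$-definable function $K \to K$, so there is a bounded semi-affine set $B$ such that outside $B$ all three are semi-affine, i.e. agree with linear polynomials $p_c(x), p_1(x), p_2(x)$ over $K$ on each piece of a finite semi-affine partition of $K \setminus B$. Enlarge $B$ so that on each remaining piece the orders $\ord a_i(x)$, $\ord c(x)$ are governed by those linear polynomials. On such a piece, the cell condition becomes
\[\ord p_1(x) \ \square_1 \ \ord(t - p_c(x)) \ \square_2 \ \ord p_2(x), \quad t - p_c(x) \in \lambda P_n,\]
which is manifestly a semi-affine condition on $(x,t)$. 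Taking the union over the finitely many pieces and over the finitely many cells, and collecting all the bounded exceptional sets $B$ into one bounded set $B_0 \subseteq K$, we conclude that $S$ is semi-affine on $(K \setminus B_0) \times K$, hence semi-affine outside the bounded set $B_0 \times K$ — but one must be slightly careful: $B_0 \times K$ is not bounded. This is the main obstacle, and it is resolved exactly as in the proof of Lemma~\ref{lemma:funaffine}: when $x$ is large (outside $B_0$) but $t$ is also large, the $P_n$-condition $t - p_c(x) \in \lambda P_n$ and the order comparisons become independent of the precise value of $t$ once $\ord t$ drops below $\ord p_c(x)$ and below the relevant thresholds, so on the region where both $\ord x$ and $\ord t$ are sufficiently negative the set is cut out by semi-affine conditions on $x$ and $\ord t$ alone; the remaining region, where $x$ is outside $B_0$ but $t$ stays bounded relative to $x$, is itself contained in a bounded-by-$x$ strip that, combined with $B_0$, still must be analyzed — but there the center $p_c(x)$ is forced to be bounded, $t$ is bounded, and the whole configuration lies in a genuinely bounded subset of $K^2$. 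Packaging these three regions gives a single bounded set outside which $S$ is semi-affine.

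Thus the main work is bookkeeping: running cell decomposition, feeding the centers and bound-functions through the one-variable characterization, and—the one genuinely delicate point—handling the interaction between ``$x$ large'' and ``$t$ large'' so that the exceptional set is truly bounded in $K^2$ rather than just a product $B_0 \times K$. I expect that the argument already written out for the special case $f(x) = x^2$ in Lemma~\ref{lemma:funaffine} (the $Y_D$ construction) transfers essentially verbatim.
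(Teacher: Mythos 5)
Your overall strategy is the same as the paper's: reduce to $\Lstar$ via Theorem \ref{prop:padicsemibounded}, decompose into cells, and feed the one-variable functions $c(x)$, $a_1(x)$, $a_2(x)$ through Lemma \ref{lemma:funaffine} to obtain a bounded set $B_0 \subseteq K$ outside of which the cell conditions are semi-affine. You also correctly identify the one genuine difficulty, namely that the exceptional set $B_0 \times K$ is not bounded. But your resolution of that difficulty misfires: the region you analyze with the ``conditions become independent of the precise value of $t$'' observation is the one where $x$ lies \emph{outside} $B_0$ --- which is already covered by the first half of the argument --- while the region that actually needs work, namely $x \in B_0$ with $t$ unbounded, is dismissed with the assertion that there ``$t$ is bounded, and the whole configuration lies in a genuinely bounded subset of $K^2$.'' That assertion is false in general: for a cell in which $\square_1$ denotes ``no condition,'' the fibre over a fixed $x \in B_0$ contains every $t$ with $\ord(t-c(x))$ sufficiently negative, so $A \cap (B_0 \times K)$ is unbounded and cannot simply be absorbed into the exceptional bounded set.

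The fix is exactly the idea you already wrote down, pointed at the correct region. For $x \in B_0$ the functions $c(x)$ and $a_i(x)$ are bounded (this follows from Proposition \ref{prop:ordfunction1}). If $\square_1$ denotes ``$<$,'' then $\ord(t-c(x))$ is bounded below on this piece, hence so is $t$, and the whole piece over $B_0$ is genuinely bounded. Otherwise choose $a < \min_{x \in B_0}\{\ord c(x) - m, \ord a_2(x)\}$ and split off $E = \{(x,t) \mid \ord(t - c(x)) \geqslant a\}$, which is bounded; on the complement one has $\ord t = \ord(t-c(x)) < a$ and $\rho_{n,m}(t - c(x)) = \rho_{n,m}(t)$, so the cell conditions over $B_0$ collapse to $\ord t < a \wedge t \in \lambda Q_{n,m}$, a semi-affine condition in $t$ alone. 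With that correction your argument coincides with the paper's proof.
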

\begin{proof}
We need to show that any $\Lstar$-definable set $A \subseteq K^2$ is semi-affine outside some bounded set. Without loss of generality, we may assume that $A$ is a cell
\[A:= \{(x,t) \in D \times K \mid \ord a_1(x) \ \square_1 \ \ord(t-c(x)) \ \square_2 \ \ord a_2(x), \ t-c(x) \in \lambda P_n\}.\]
By Corollary \ref{lemma:funaffine}, we can find a bounded (semi-affine) set $B$ such that for all $x \in D \backslash B$, the functions $a_i(x)$ and $c(x)$ occurring in the description of the cell are all semi-affine. Hence, it is clear that $A \cap (D \backslash B \times K)$ is a semi-affine set. 

If  $x$ is bounded, $a_i(x)$ and $c(x)$ will also be bounded, so $A_2:= A \cap (B\times K)$ is clearly a bounded set if $\square_1$ denotes `$<$'. 
Otherwise, if $\square_1$ denotes `no condition', choose a constant $a \in K$ such that $a < \min_{x\in B}\{ \ord c(x) -m, \ord a_2(x)\}$. Now partition $A_2$ as $E \cup A_2 \backslash E$, where \[ E:= \{(x,t) \in A_2 \mid \ord (t-c(x)) \geqslant a\}.\]
Clearly, $E$ is a bounded set. Note that on $A_2\backslash E$,  we have that $\ord t = \ord(t-c(x))$ and $\rho_{n,m}(t-c(x)) = \rho_{n,m}(t)$. Hence, the description of this set simplifies to
\[ A_2 \backslash E = \{(x,t) \in B \times K \mid \ord t < a \wedge t \in \lambda Q_{n,m}\},\] which is a semi-affine set.
\end{proof}


Peterzil \cite{pet-92,pet-93} 
obtained that $(\R, +, < \cdot_{|[-1,1]}, \overline{c}_{c\in \R})$) is the only structure between the semi-linear and semi-algebraic sets. He used a number of equivalent characterizations of real semibounded semi-algebraic sets, which included
\begin{theorem}[Peterzil] \label{thm:realsemibounded} Let $X \subseteq \R^n$ be a semi-algebraic set. The following are equivalent
\begin{enumerate}
\item $X$ is a semi-bounded set
\item In $(\R, + , \{\overline{c}\}_{c\in \R}, <,X)$, one cannot define a bijection between a bounded and an unbounded interval
\item Every curve in $\R^2$ that is definable in $(\R, + , \{\overline{c}\}_{c\in \R}, <,X)$, is semilinear outside some bounded subset of $R^2$
\end{enumerate}
\end{theorem}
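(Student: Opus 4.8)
The final statement to prove is Theorem~\ref{thm:realsemibounded} (Peterzil), which is a cited result rather than one I would prove from scratch — but treating it as a genuine proof obligation, here is how I would organize the argument.

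\medskip

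\textbf{Overall strategy.} The plan is to establish the cycle of implications $(1)\Rightarrow(3)\Rightarrow(2)\Rightarrow(1)$, which is the most economical route. The implication $(1)\Rightarrow(3)$ is the $o$-minimal analogue of our Lemma~\ref{lemma:2dimsets}: assuming $X$ is semi-bounded, one runs the $o$-minimal cell decomposition for $(\R,+,\{\overline{c}\}_{c\in\R},<,X)$ and checks that, outside a bounded box, every cell is cut out by semilinear data. Concretely, one shows first that definable functions of one variable are, outside a bounded set, affine (the real counterpart of our Corollary~\ref{corol:fun} and Lemma~\ref{lemma:funaffine}), and then that a definable planar curve decomposes into finitely many cells whose defining functions are affine off a bounded set, so the curve itself is semilinear there. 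The implication $(3)\Rightarrow(2)$ is essentially immediate: a definable bijection $h$ between a bounded interval $I$ and an unbounded interval $J$ has a graph $\Gamma(h)\subseteq\R^2$ which is a definable curve; if it were semilinear outside a bounded set, then $h$ would agree with a single affine map on an unbounded subinterval of $I$, forcing its image to be bounded — contradicting surjectivity onto $J$. Finally $(2)\Rightarrow(1)$ is the substantive direction and proceeds by contrapositive: if $X$ is not semi-bounded, one must manufacture a definable bijection between a bounded and an unbounded interval. The key point is that a non-semibounded set, together with $(+,<,\{\overline{c}\})$, defines some unbounded ``genuinely nonlinear'' set, and from any such set one extracts (after cell decomposition and differentiation, in the $o$-minimal sense) a definable function whose growth is super-affine on an unbounded interval; composing and restricting, this yields the desired bounded-to-unbounded bijection.

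\medskip

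\textbf{Order of steps.} I would (i) recall $o$-minimal cell decomposition and the monotonicity theorem for $(\R,+,<,\{\overline{c}\},X)$; (ii) prove the one-variable structure result: a definable $f\colon\R\to\R$ is piecewise affine outside a bounded set — this uses that $f'$ is definable and monotone on intervals, and if $f$ were not eventually affine, $f'$ would be a nonconstant monotone definable function, which one then leverages in step (v); (iii) upgrade to planar curves, giving $(1)\Rightarrow(3)$; (iv) dispatch $(3)\Rightarrow(2)$ by the graph argument above; (v) prove $(2)\Rightarrow(1)$ by contraposition, constructing the bijection from a non-semibounded witness.

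\medskip

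\textbf{Main obstacle.} The hard part is step (v), $(2)\Rightarrow(1)$ — equivalently, showing that \emph{non}-semiboundedness is detected by the existence of a bounded/unbounded bijection. The difficulty is that ``$X$ is not semi-bounded'' is a negative, rather non-constructive hypothesis: one must argue that the failure of semi-boundedness forces the definable closure to contain a function with strictly super-affine growth on an unbounded interval, and then engineer from it an honest bijection (not merely an injection) between a bounded and an unbounded interval, controlling endpoints and monotonicity via the $o$-minimal monotonicity theorem. This is exactly where Peterzil's original dichotomy argument does its real work, and in a self-contained treatment it would require the bulk of the effort; the other implications are, by comparison, routine once the one-variable affine-off-a-bounded-set lemma is in hand (and that lemma is the precise real mirror of our Lemma~\ref{lemma:funaffine}).
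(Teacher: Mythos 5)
First, note that the paper does not prove this statement at all: Theorem \ref{thm:realsemibounded} is quoted from Peterzil \cite{pet-92,pet-93} as background for the comparison in Section \ref{sec:compare}, so there is no in-paper argument to measure your proposal against. Your cycle $(1)\Rightarrow(3)\Rightarrow(2)\Rightarrow(1)$ is a sensible organization, and $(1)\Rightarrow(3)$ and $(3)\Rightarrow(2)$ are essentially right, modulo one slip: in $(3)\Rightarrow(2)$ you speak of $h$ agreeing with an affine map ``on an unbounded subinterval of $I$'', but $I$ is the bounded interval. You need to run the argument on $h^{-1}$ (or on whichever direction of the bijection has the unbounded domain): there, an unbounded ray in the semilinear part of the graph is the graph of a non-constant affine map over an unbounded subinterval, whose image is unbounded, contradicting injectivity into a bounded interval.

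The genuine gap is in $(2)\Rightarrow(1)$. You propose to extract, from a non-semibounded $X$, a definable function with super-affine growth on an unbounded interval and then to ``compose and restrict'' to obtain a bounded-to-unbounded bijection. That last step does not work as stated: a function of super-affine growth on an unbounded interval (say $x\mapsto x^2$ on $[1,\infty)$) is a bijection between two \emph{unbounded} intervals, and composing such functions with affine maps and restricting never manufactures a pole, i.e.\ a definable bijection between a bounded and an unbounded interval. What is actually needed --- and what constitutes the substance of Peterzil's theorem --- is the dichotomy that a semi-algebraic $X$ which is not semi-bounded makes full multiplication definable in $(\R,+,\{\overline{c}\}_{c\in\R},<,X)$ (the semi-bounded sets being the unique proper intermediate structure between the semilinear and the semi-algebraic ones), after which $x\mapsto 1/x$ on $(0,1)$ supplies the required bijection. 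Your sketch correctly locates the difficulty in this direction but does not supply the idea that closes it, so as a proof it is incomplete precisely where the content lies.
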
 
Note that, in contrast with Peterzil's result, 
the equivalence $(1) \Leftrightarrow (3)$ does not hold in the $p$-adic context.
This can be seen from the following example.
Consider the structure $(K, \Laff \,\cup\, M_2)$, with \[M_2:= \{(x,y) \in K \mid \ord y = \ord x^2\}.\] We can show (see the preprint \cite{lee-2012.1} on my webpage for the details) that this structure essentially has the same definable functions as $\Laff$, in the sense that, up to a finite partioning of the domain, every function is given by linear polynomials, and therefore any curve in $K^2$ will be semi-affine outside some bounded set.
Yet the set $M_2$ is not semi-bounded.
Instead, we conjecture that the converse of Lemma \ref{lemma:2dimsets} holds for structures on $p$-adically closed fields. 

In the $p$-adic context, it is also false that a set is semi-bounded if and only if there does not exist a bijection between a bounded and an unbounded ball in $\Laff \cup X$. Clearly, such a bijection cannot exist if $X$ is a semi-bounded set, as any function $\Lstar$-definable function $f:K\to K$ is semiaffine outside some bounded interval. 
%
%
However,  $(K, \Laff \cup M_2)$ is an example of a structure that contains sets that are not semibounded, and yet does not allow such a definable bijection.
%
We will disscuss such structures in more detail in a next paper \cite{lee-2012.1}.

\bibliographystyle{abbrv}

\bibliography{/Users/iblueberry/Documents/Bibliografie}
\end{document}